\title[Towards $\pi_*L_{K(2)}Z$]{Towards the $K(2)$-local homotopy groups of $Z$}
\author[Prasit B.]{Prasit Bhattacharya$^{1,*}$}
\address{$^1$Department of Mathematics, University of Virginia, 131 Kerchof Hall, Charlottesville, VA 22904}
\address{$^1$Tel: +1(434)924-4919}
\address{$^*$Corresponding author}
\email{$^1$pb9wh@virginia.edu}
\author[P. Egger]{Philip Egger$^2$}
\address{$^{2}$Hummel Lab, Campus Biotech, Swiss Federal Institute of Technology Lausanne (EPFL), 1201 Geneva, Switzerland}
\address{$^2$Tel: +41(21)695-5040}
\email{$^2$philip.egger@epfl.ch}
\begin{document}

\maketitle 
\tableofcontents

\begin{abstract}
In \cite{BE}, we introduced a class $\ZZ$ of $2$-local finite spectra and showed that all spectra $Z\in\ZZ$ admit a $v_2$-self-map of periodicity $1$. The aim of this article is to compute the $K(2)$-local homotopy groups $\pi_*L_{K(2)}Z$ of all spectra $Z \in \ZZ$ using a homotopy fixed point spectral sequence, and we give an almost complete answer. The incompleteness lies in the fact that we are unable to eliminate one family of $d_3$-differentials and a few potential hidden $2$-extensions, though we conjecture that all these differentials and hidden extensions are trivial.


\end{abstract}
\begin{center}Keywords: $K(2)$-local, stable homotopy groups, Morava stabilizer group\end{center}
\section{Introduction} \label{Sec:intro}
We recently introduced (see~\cite{BE}) the class of all finite $2$-local type $2$ spectra $Z$ such that there is an isomorphism
\[ H_*Z \iso A(2) \modmod E(Q_2)\]
of $A(2)$-modules, where $A(2)$ is the subalgebra of the Steenrod algebra generated by  $Sq^1$, $Sq^2$ and $Sq^4$. We denote this class by $\ZZ$. Let $K(n)$ denote the height $n$ Morava $K$-theory and $k(n)$ its connective cover. Let $\tmf$ denote the connective spectrum of topological modular forms. The two key features of $\ZZ$ (see \cite{BE} for details) are as follows: 
\begin{itemize}
\item Every $Z \in \ZZ$ admits a self-map $v:\Sigma^6 Z\to Z$ which induces multiplication by $v_2^1$ on $K(2)_*$-homology of $Z$, i.e. $Z$ admits a $v_2^1$-self-map. 
\item Every $Z \in \ZZ$ satisfies $\tmf \sma Z \simeq k(2)$. 
\end{itemize}  
The purpose of this paper is to compute the $K(2)$-local homotopy groups of any $Z \in \ZZ$. 

It is difficult to overestimate the importance of $K(n)$-local computations in stable homotopy theory. At every prime $p$, the homotopy groups of $L_{K(1)} S^0$ have been known to capture the patterns in chromatic layer $1$ of the stable homotopy groups of spheres (also known as the image of $J$) since work of Adams \cite{Adams}. Likewise, the chromatic fracture square, the chromatic convergence theorem \cite{Ravnil}, as well as the nilpotence and periodicity theorems in \cite{HS}, suggest that the $K(n)$-local homotopy groups of $S^0$ or other finite spectra encapsulate information about the patterns in the $n$-th chromatic layer of the stable homotopy groups of spheres. 

However, our motivation to compute the $K(2)$-local homotopy groups of $Z$ comes from its relevance to the telescope conjecture due to Ravenel~\cite{Ravtel}. One of the various formulations of the telescope conjecture is as follows. Let $X$ be a $p$-local type $n$ spectrum. By \cite{HS}, $X$ admits a $v_n$-self-map $v:\Sigma^{t}X \to X$, i.e. a self-map such that $K(n)_*v$ is an isomorphism. Then the homotopy groups of the telescope of $X$  
\[ T(X) := \hocolim(X \overset{v}\to \Sigma^{-t}X \overset{v}\to \Sigma^{-2t}X \overset{v} \to \dots )\]
are the $v_n$-inverted homotopy groups of $X$, i.e. $\pi_*(T(X)) = v_n^{-1} \pi_*(X)$. Since $K(n)_* = \mathbb{F}_p[v_n^{\pm 1}]$, the localization of a spectrum with respect to $K(n)$ can be thought of as, roughly speaking, another way of `inverting $v_n$' in the homotopy groups of $X$. Moreover, there is always a natural map 
\[ \iota: T(X) \to L_{K(n)}X.\]

\begin{telconj}[Ravenel] \label{telescope}For every type $n$ spectrum $X$, the map $\iota$ is a weak equivalence.
\end{telconj}

It follows from the thick subcategory theorem \cite[Theorem~7]{HS}, that if the telescope conjecture is true for one $p$-local type $n$ finite spectrum then it is true for all $p$-local type $n$ finite spectra (see~\cite{Ravnil}). For chromatic height $n=1$, the telescope conjecture was proved by Haynes Miller \cite{Mil} using the mod $p$ Moore spectrum $M_p(1)$ when $p>2$, and by Mark Mahowald using the $bo$-resolution of the finite spectrum $Y:=M_2(1) \sma C\eta$ \cite{M1,M2} when $p=2$. While the telescope conjecture is true for $n\leq1$ at every prime, it remains an open question for all other pairs $(n,p)$.

We claim that in the case $n=2,p=2$, the $2$-local type $2$ spectra $Z \in \ZZ$ are the most appropriate ones to consider in our study of the telescope conjecture. Firstly, they all admit a $v_2^1$-self-map, whereas other type $2$ spectra with known $v_2$-periodicity, such as $M(1,4)$ and the $A_1$ spectra, only admit $v_2^{32}$-self-maps \cite{BHHM, BEM}. Lower periodicity is desirable for computational reasons. Moreover, the fact that $\tmf \sma Z \simeq k(2)$ makes the $E_1$-page of the $\tmf$-based Adams spectral sequence readily computable. Also, the $Z \in \ZZ$ are in many ways the `correct' height $2$ analogue of $Y$ (the spectrum used in the proof of the telescope conjecture at chromatic height $1$ at the prime $2$). This is because $Y$ is a type $1$ spectrum which satisfies properties analogous to $Z$, i.e. it admits a $v_1^1$-self-map \cite{DM81} and satisfies $bo \sma Y \simeq k(1)$. We will further strengthen our claim by giving an almost complete computation of the $K(2)$-local homotopy groups of any $Z \in \ZZ$, which is the `easier side of the telescope conjecture' because of its computational accessibility.

In this paper we will use a homotopy fixed point spectral sequence \eqref{eqn:homotopyfix}, which is essentially a consequence of the work of Jack Morava in \cite{Mor} followed by \cite{MRW} and \cite{DH2}. We will give further details in Section~\ref{Sec:FGL}.

To compute the homotopy fixed point spectral sequence, we need to understand the action of the big Morava stabilizer group $\Gt= \St \rtimes Gal(\mathbb{F}_4/\mathbb{F}_2)$ on $\E Z$, where $\St$ is the small Morava stabilizer group (see Section~\ref{Sec:FGL} for details). This action can be understood by explicitly analysing the $\BPBP$-comodule structure on $\BP Z$ via the map 
\[ \phi: \BPBP \to Hom^c(\St, \E Z)\] 
due to \cite{DH1}. The real hard work in this paper is to compute the $\BPBP$-comodule structure on $\BP Z$ and obtain the action of $\St$ on $\E Z$ via the map $\phi$. The group $\St$ has a finite quaternion subgroup $Q_8$ (to be described in Section~\ref{Sec:Morava}) and the pivotal result of this paper is Theorem~\ref{crucial}, where we prove that there is an isomorphism 
\[ \Ez Z\iso\Ff[Q_8]\]
of modules over the group ring $\Ff[Q_8]$. Part of the proof of Theorem~\ref{crucial} is a nontrivial exercise in representation theory, which we have banished to Lemma~\ref{appendixmain} in the appendix in order to avoid distracting from the main mathematical issues at hand. Theorem~\ref{crucial} provides another point of comparison between $Y$ and $Z$; note that $\Gn_1= \Z_2^{\times} \iso \Z/2 \times \Z_2$, and it can easily be seen that \[ (E_1)_0Y \iso \mathbb{F}_2[\Z/2].\]
In Section $5$, we run the algebraic duality resolution spectral sequence, a convenient tool to compute the group cohomology with coefficients in $\E Z$. Finally in Section~\ref{Sec:homotopy} we compute the the $E_2$-page of \eqref{eqn:homotopyfix}. We locate two possible families of $v_2$-linear $d_3$-differentials and several possible hidden extensions. Using the inclusion $S^0 \hookrightarrow Z $ of the bottom cell, we are able to eliminate one of the two $v_2$-linear $d_3$-differentials and some of the possible hidden extensions. 

\subsection*{Summary of results}
In Figure~\ref{LK2Z1}, we summarize all possibilities for $\pi_*L_{K(2)}Z$ from the work in this paper.  
Figure~\ref{LK2Z1} is a part of the homotopy fixed point spectral sequence, where we represent possible $d_3$-differentials using dashed arrows and hidden extensions by dotted lines. Any generator which is a multiple of a specific element $\zeta$ in the $E_2$-page (to be discussed in Section~\ref{Sec:homotopy}) is displayed using a `$\circ$', otherwise using a `$\bullet$'. 
\begin{figure}[!ht]
\[\begin{sseq}[grid=chess, entrysize= .64cm]{-8...10}{0...4}
\ssmoveto{-6}{0}\ssdropbull \ssmove{6}{0} \ssdropbull \ssname{g} \ssmove{6}{0} \ssdropbull
\ssmoveto{-7}{1} \ssdropbull \ssname{c} \ssdrop{\circ} \ssname{rho} \ssmove{6}{0} \ssdropbull \ssname{c1} \ssdrop{\circ} \ssname{rho1} \ssmove{6}{0} \ssdropbull \ssname{c2} \ssdrop{\circ} \ssname{rho2}
\ssmoveto{-5}{1} \ssdropbull \ssname{a} \ssmove{6}{0} \ssdropbull \ssname{a1} \ssmove{6}{0} \ssdropbull \ssname{a2} 
\ssmoveto{-3}{1} \ssdropbull \ssname{b} \ssmove{6}{0} \ssdropbull \ssname{b1} \ssmove{6}{0} \ssdropbull \ssname{b2}
\ssmoveto{-8}{2} \ssdropbull \ssname{ab} \ssdrop{\circ} \ssname{crho} \ssmove{6}{0} \ssdropbull \ssname{ab1} \ssdrop{\circ} \ssname{crho1}\ssmove{6}{0} \ssdropbull \ssname{ab2} \ssdrop{\circ} \ssname{crho2}\ssmove{6}{0} \ssdropbull \ssname{ab3} \ssdrop{\circ} \ssname{crho3} 
\ssmoveto{-6}{2} \ssdropbull \ssname{ac} \ssdrop{\circ}{arho} \ssmove{6}{0} \ssdropbull \ssname{ac1} \ssdrop{\circ}{arho1} \ssmove{6}{0} \ssdropbull \ssname{ac2} \ssdrop{\circ}{arho2}
\ssmoveto{-4}{2} \ssdropbull \ssname{bc} \ssdrop{\circ} \ssname{brho}\ssmove{6}{0} \ssdropbull \ssname{bc1} \ssdrop{\circ} \ssname{brho1}\ssmove{6}{0} \ssdropbull \ssname{bc2} \ssdrop{\circ} \ssname{brho2} \ssmove{6}{0} \ssdropbull \ssname{bc3} \ssdrop{\circ} \ssname{brho3}
\ssmoveto{-3}{3} \ssdropbull \ssname{abc} \ssdrop{\circ} \ssname{abrho} \ssmove{6}{0} \ssdropbull \ssname{abc1} \ssdrop{\circ} \ssname{abrho1} \ssmove{6}{0} \ssdropbull \ssname{abc2} \ssdrop{\circ} \ssname{abrho2}
\ssmoveto{-7}{3} \ssdrop{\circ} \ssname{acrho} \ssmove{6}{0} \ssdrop{\circ} \ssname{acrho1} \ssmove{6}{0} \ssdrop{\circ} \ssname{acrho2} \ssmove{6}{0}
\ssmoveto{-5}{3} \ssdrop{\circ} \ssname{bcrho} \ssmove{6}{0} \ssdrop{\circ} \ssname{bcrho1} \ssmove{6}{0} \ssdrop{\circ} \ssname{bcrho2} \ssmove{6}{0}
\ssmoveto{-4}{4} \ssdrop{\circ} \ssname{abcrho} \ssmove{6}{0} \ssdrop{\circ} \ssname{abcrho1} \ssmove{6}{0} \ssdrop{\circ} \ssname{abcrho2} \ssmove{6}{0}
\ssgoto{b} \ssgoto{abcrho} \ssstroke[dashed,arrowto]
\ssgoto{b1} \ssgoto{abcrho1} \ssstroke[dashed,arrowto]
\ssgoto{b2} \ssgoto{abcrho2} \ssstroke[dashed,arrowto]
\ssgoto{c} \ssgoto{acrho} \ssstroke[dotted]
\ssgoto{c1} \ssgoto{acrho1} \ssstroke[dotted]
\ssgoto{c2} \ssgoto{acrho2} \ssstroke[dotted]
\ssgoto{a} \ssgoto{bcrho} \ssstroke[dotted]
\ssgoto{a1} \ssgoto{bcrho1} \ssstroke[dotted]
\ssgoto{a2} \ssgoto{bcrho2} \ssstroke[dotted]
\ssgoto{b} \ssgoto{abc} \ssstroke[dotted]
\ssgoto{b} \ssgoto{abrho} \ssstroke[dotted]
\ssgoto{b1} \ssgoto{abc1} \ssstroke[dotted]
\ssgoto{b1} \ssgoto{abrho1} \ssstroke[dotted]
\ssgoto{b2} \ssgoto{abc2} \ssstroke[dotted]
\ssgoto{b2} \ssgoto{abrho2} \ssstroke[dotted]
\ssgoto{brho} \ssgoto{abcrho} \ssstroke[dotted]
\ssgoto{bc} \ssgoto{abcrho} \ssstroke[dotted]
\ssgoto{brho1} \ssgoto{abcrho1} \ssstroke[dotted]
\ssgoto{bc1} \ssgoto{abcrho1} \ssstroke[dotted]
\ssgoto{brho2} \ssgoto{abcrho2} \ssstroke[dotted]
\ssgoto{bc2} \ssgoto{abcrho2} \ssstroke[dotted]
\end{sseq}\]
\caption{Possible differentials and hidden extensions in the spectral sequence $H^s(\Gt;\Et Z)\To\pi_{t-s}L_{K(2)}Z$.}\label{LK2Z1}
\end{figure}
Since the homotopy groups of $L_{K(2)}Z$ are periodic with respect to multiplication by $v_2^1$, which has bidegree $(s,t-s) = (0,6)$, the different possible answers can be read off from the portion  $0 \leq t-s \leq 5$.

In work to appear, the $\tmf$-resolution for one particular model of $Z \in \ZZ$ is studied to compute its unlocalized homotopy groups. This computation shows that the potential $d_3$-differentials and hidden extensions as indicated in Figure~\ref{LK2Z1} are trivial, giving us a complete computation of the $K(2)$-local homotopy groups of that particular spectrum $Z$. We expect the same thing to happen for every spectrum $Z \in \ZZ$.
\begin{conj} \label{collapse} For every $Z \in \ZZ$, the $K(2)$-local homotopy groups of $Z$ are given by 
\[ \pi_*L_{K(2)}Z \iso \Ft[v_2^{\pm 1}]\otimes E(a_1, a_3, a_5, \zeta) \]
where $|a_i| = i$, $|\zeta| = -1$ and $|v_2| = 6$.
\end{conj}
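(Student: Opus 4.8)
The plan is to deduce Conjecture~\ref{collapse} from a direct computation of the unlocalized homotopy groups $\pi_*Z$ together with naturality of \eqref{eqn:homotopyfix} in $Z$. By Section~\ref{Sec:homotopy} the $E_2$-page of \eqref{eqn:homotopyfix} is known, and the conjecture is equivalent to the conjunction of two assertions: that the one surviving family of $v_2^1$-linear $d_3$-differentials (the dashed arrows of Figure~\ref{LK2Z1}, whose sources lie in filtration $1$) vanishes, and that the flagged hidden $2$-extensions (the dotted lines) are trivial. Both $\pi_*L_{K(2)}Z$ and \eqref{eqn:homotopyfix} are $v_2^1$-periodic of period $6$, so it suffices to settle both points inside a single band of six consecutive stems; and once the $d_3$-family is killed, the surviving $E_\infty$-page already has the graded $\Ft$-dimensions of $\Ft[v_2^{\pm1}]\otimes E(a_1,a_3,a_5,\zeta)$, so what remains is purely the group-extension question.

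The principal input is a second, independent computation: that of $\pi_*Z$ via the $\tmf$-based Adams spectral sequence, whose $E_1$-page is accessible because $\tmf\sma Z\simeq k(2)$ reduces it to the homotopy of smash powers of the cofiber of the unit $S^0\to\tmf$ with $k(2)$, computable from the splitting of $\tmf\sma\tmf$. Carried far enough, this spectral sequence determines the $v_2^1$-periodic families of $\pi_*Z$, and these are then compared with \eqref{eqn:homotopyfix} through the natural map $Z\to L_{K(2)}Z$ --- which one may factor as $Z\to T(Z)\to L_{K(2)}Z$ using the $v_2^1$-self-map of \cite{BE}, so that $\pi_*T(Z)=v_2^{-1}\pi_*Z$ --- and through the comparison of $\tmf$ with its $K(2)$-localization $L_{K(2)}\tmf$, a finite homotopy fixed point spectrum of $\E$; this comparison respects the respective Adams-type filtrations up to a bounded shift. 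The key point is that in each stem carrying a $d_3$-source the part of the $E_2$-page of \eqref{eqn:homotopyfix} in filtrations $\le 1$ reduces to that single source, so as soon as the computation produces, in low $\tmf$-Adams filtration, a $v_2^1$-periodic class of $\pi_*Z$ with nonzero image in $\pi_*L_{K(2)}Z$, the filtration comparison identifies that image with the $d_3$-source, which is therefore a permanent cycle; $v_2^1$-periodicity then propagates this over all stems and kills the whole family. I would reinforce this with the naturality already exploited in Section~\ref{Sec:homotopy}: the bottom-cell inclusion $S^0\hookrightarrow Z$, the top-cell collapse $Z\to S^d$, and the projections of $Z$ onto its other subquotients all induce maps of spectral sequences over $\pi_*L_{K(2)}S^0$, and a sufficiently careful accounting of these, together with Toda-bracket relations among the attaching maps of $Z$, may even eliminate the last $d_3$-family formally.

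The hidden $2$-extensions are treated in the same spirit. Multiplication by $2$ is visible on the $E_\infty$-page of the $\tmf$-based Adams spectral sequence for $Z$ (equivalently for $T(Z)$), and transporting it along $Z\to L_{K(2)}Z$ decides each ambiguity; independently, the module structure of $\pi_*L_{K(2)}Z$ over $\pi_*L_{K(2)}S^0$ coming from $S^0\hookrightarrow Z$, combined with the known $2$-torsion of the $K(2)$-local sphere, constrains the remaining possibilities. Moreover, if $Z$ is Spanier--Whitehead self-dual up to a suspension --- as one expects from the self-duality of $A(2)\modmod E(Q_2)$ as an $A(2)$-module --- then Gross--Hopkins duality equips $\pi_*L_{K(2)}Z$ with a duality, a symmetry that is automatic for the exterior-algebra answer and is incompatible with any differential or extension lacking a mirror partner.

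The main obstacle is the $\tmf$-based Adams spectral sequence computation of $\pi_*Z$ itself: its $E_1$-page is tractable, but the spectral sequence is large, carries an intricate pattern of differentials, and must be pushed across enough stems to capture every $v_2^1$-periodic family feeding the $d_3$-sources and the extension-ambiguous classes --- this is precisely the content of the forthcoming work referred to above. A route avoiding $\tmf$ altogether, settling the differentials and extensions purely $K(2)$-locally using the full $\Gt$-equivariant structure on $\E Z$, would instead demand a substantially finer understanding of the algebraic duality resolution spectral sequence of Section~$5$ --- of its higher differentials and extension problems --- and of \eqref{eqn:homotopyfix} than is developed here or needed for Theorem~\ref{crucial}.
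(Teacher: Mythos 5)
The statement you were asked to prove is Conjecture~\ref{collapse}; the paper does not prove it. The paper's own argument stops at Figure~\ref{LK2Z1}: using the bottom-cell inclusion $S^0\hookrightarrow Z$ it rules out one of the two $v_2$-linear $d_3$-families and some hidden $2$-extensions in \eqref{eqn:homotopyfix}, and it leaves the remaining $d_3$-family and the remaining $2$-extensions open, offering as evidence only unpublished work in which the $\tmf$-resolution is run for one particular model of $Z$. So there is no proof in the paper to compare yours against, and your text should be judged as a proposed proof strategy.

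As such it is broadly aligned with the route the paper gestures at, but it is not a proof, and the gaps are substantive rather than cosmetic. First, the decisive input --- actually computing $\pi_*Z$ via the $\tmf$-based Adams spectral sequence far enough to exhibit the $v_2^1$-periodic families feeding the $d_3$-sources and the extension-ambiguous classes --- is exactly what you defer to the forthcoming work, so the argument is circular as a proof of the conjecture. Second, the detection step is unjustified: given a $v_2^1$-periodic class in $\pi_*Z$, you need its image in $\pi_*L_{K(2)}Z$ to be nonzero and to be identified with the specific filtration-$1$ class $x_{1,4}$ (up to $v_2$-powers); the asserted comparison of the $\tmf$-Adams filtration with the homotopy fixed point filtration ``up to a bounded shift'' is stated without proof, and nonvanishing of images under $Z\to T(Z)\to L_{K(2)}Z$ is precisely the kind of assertion that the open status of the telescope conjecture at $(n,p)=(2,2)$ should make you treat with care (one must detect the class $K(2)$-locally, e.g.\ by comparison over $L_{K(2)}\tmf\simeq E_2^{hG_{48}}$, and you do not carry this out). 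Third, the auxiliary mechanisms you invoke --- Toda-bracket relations that ``may even eliminate the last $d_3$-family formally,'' and a Spanier--Whitehead self-duality of $Z$ feeding into Gross--Hopkins duality --- are speculative: neither the self-duality of $Z$ nor the claimed incompatibility of the duality with an unpaired differential or extension is established. Finally, the conjecture quantifies over every $Z\in\ZZ$, while the unlocalized computation (like the paper's cited unpublished work) concerns one model; since $\pi_*Z$ may a priori depend on the choice of $Z\in\ZZ$, an additional argument --- either redoing the computation for all models or showing the $K(2)$-local answer is independent of the model --- would still be required.
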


The spectrum $Z$ in the unpublished work mentioned above would be the first finite $2$-local spectrum for which we have complete knowledge of its $K(2)$-local homotopy groups. It can be built using iterated cofiber sequences of five different self-maps (see \cite{BE}) starting from $S^0$. Thus, one could work backwards from $\pi_*L_{K(2)}Z$, using Bockstein spectral sequences iteratively to get information about $\pi_*L_{K(2)}S^0$.

\section*{Organization of the paper}  
The  results in this paper are independent of the choice of  $Z \in\ZZ$, and hence $Z$ will refer to an arbitrary spectrum $Z \in \ZZ$ for the rest of the paper. 

We devote Section~\ref{Sec:FGL} to recalling some fundamental results which connect the theory of formal group laws to homotopy theory. 

In Section~\ref{Sec:E2Z} we compute the $\BPBP$-comodule structure of $\BP Z$. 

In Section~\ref{Sec:Morava}, we briefly recall some of the details of the height $2$ Morava stabilizer group $\St$ and compute the action of $\St$ on the generators of $\E Z$.

In Section~\ref{Sec:Duality}, we compute the group cohomology with coefficients in $\E Z$ using the duality spectral sequence as well as a result of Henn, reported by Beaudry \cite{Bea2}. 

In Section~\ref{Sec:homotopy}, we analyse the homotopy fixed point spectral sequence for $Z$ and eliminate one of the two possible $\Ft[v_2^{\pm 1}]$-linear families of $d_3$-differentials and some of the possible hidden extensions.
     
In Appendix \ref{appendix}, we include the representation theory exercise omitted from the proof of Theorem~\ref{crucial}.
\section*{Acknowledgments}
The authors would like to thank Agn\`es Beaudry and Mark Behrens for their helpful comments and suggestions. We also thank Paul Goerss for his tireless work advising the second author's Ph.D. thesis, which was the starting point of this project. The idea of understanding eight dimensional $Q_8$-representations, as expressed in Appendix, stemmed from a discussion with Noah Snyder. We are also thankful to the referee for identifying some issues in an earlier draft and providing many helpful details and suggestions for improvement.
\section{Formal group laws and homotopy theory}\label{Sec:FGL}
The theory of formal group laws was developed by number theorists and eventually found by Lazard and Quillen to have deep relations with homotopy theory. We will review these relations, primarily following~\cite{LT} and~\cite{Rav}. We will conclude with a formula relating action of the Morava stabilizer group on a Morava module to the structure of a corresponding $\BPBP$-comodule.
\begin{defn}
Let $R$ be a $\Zp$-algebra. A \emph{formal group law} over $R$ is a power series $F(x,y)\in R[[x,y]]$ satisfying
\begin{itemize}
 \item $F(x,y)=F(y,x)$ 
 \item $x=F(x,0)$
 \item $F(F(x,y),z)=F(x,F(y,z))$
\end{itemize}
When $R$ is a graded $\Zp$-algebra we set $|x| = |y| = -2$ and we require that $F(x,y)$ be a homogeneous expression in degree $-2$.
\end{defn}
\begin{defn}
Given formal group laws $F,G$ over $R$, a \emph{homomorphism} from $F$ to $G$ is a power series $f\in R[[x]]$ such that $f(0)=0$ and $$f(F(x,y))=G(f(x),f(y)).$$ A homomorphism $f$ is an \emph{isomorphism} if $f'(0)$ is a unit in $R$, and an isomorphism $f$ is said to be \emph{strict} if $f'(0)=1$. A strict isomorphism from $F$ to the additive formal group law is called a \emph{logarithm} of $F$.
\end{defn}
\begin{notn}
We will often use the notation $x+_Fy$ to denote $F(x,y)$ and $[n]_F(x)$ to denote $\underbrace{x+_F\cdots+_Fx}_n$. We will denote the set of formal group laws over $R$ by $FGL(R)$, and the groupoid of formal group laws over $R$ with strict isomorphisms by $(FGL(R),SI(R))$. When $R$ is torsion-free, then the image of  $F$ in $(R \otimes \mathbb{Q})[[x,y]]$  has a logarithm, which we will denote by $\log_F  \in (R \otimes \mathbb{Q})[[x]]$
\end{notn}

\begin{defn}
Let $R$ be a torsion-free $\Zp$-algebra and let $F$ be a formal group law over $R$. Then $F$ is called \emph{$p$-typical} if its logarithm is\[\log_F(x)=\sum_{i\geq0}l_ix^{p^i}\]with $l_0=1$.
\end{defn}
Now we recall the $p$-local analogue of the famous theorem of Lazard and Quillen. All formal groups discussed will be assumed to be $p$-typical unless otherwise stated.

The assignment of a $\Zp$-algebra $R$ to the set $FGL(R)$ is functorial, and we denote this functor by
\[ FGL(-):  \Zp\textbf{-algebra} \to \textbf{Sets}.\]
Similarly, the functor which assigns a graded $\Zp$-algebra $R_*$ to the set of formal group laws over $R_*$ is denoted by 
\[ \overline{FGL}(-):  \textbf{Graded }\Zp\textbf{-algebra} \to \textbf{Sets}.\]
\begin{thm}[Cartier-Lazard-Quillen]The covariant functor $FGL(-)$ defined on the category of $\Zp$-algebras is represented by the $\Zp$-algebra \[ \tV
 = \Zp[\tilde{v}_1,\tilde{v}_2,\ldots],\]
 i.e. $FGL(R) \iso Hom_{\Zp}(\tV, R)$. The covariant functor $\overline{FGL}(-)$ defined on the category of graded $\Zp$-algebras is represented by the graded $\Zp$-algebra \[ \BP = \Zp[v_1,v_2,\ldots]\] with $|v_i| = 2(p^i-1)$, i.e. 
   $\overline{FGL}(R_*) \iso Hom_{\Zp}(\BP, R_* )$. 
\end{thm}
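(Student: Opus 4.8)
The plan is to deduce this from Lazard's structure theorem for the Lazard ring together with Cartier's theorem on $p$-typicalization, tracking gradings only at the very end. \textbf{Step 1 (Lazard).} Recall first that the functor sending a commutative ring $R$ to $FGL(R)$ is represented by the Lazard ring $L$ --- the quotient of the polynomial ring on the indeterminate coefficients of a generic symmetric power series by the relations imposed by unitality and associativity --- and that Lazard's theorem identifies $L$ with a polynomial ring $\Z[x_1, x_2, \ldots]$ (with $|x_i| = 2i$ in the graded setting, where the universal law is homogeneous of degree $-2$ with $|x| = |y| = -2$). Tensoring with $\Zp$, the restriction of $FGL(-)$ to $\Zp$-algebras is represented by $L \otimes \Zp$, and likewise for $\overline{FGL}(-)$. \textbf{Step 2 ($p$-typical retract).} Over a torsion-free $\Zp$-algebra a formal group law is $p$-typical precisely when its logarithm involves only the monomials $x^{p^i}$; Cartier's theorem provides, for an \emph{arbitrary} $\Zp$-algebra $R$, a natural strict isomorphism from each formal group law over $R$ to a $p$-typical one. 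On representing objects this is the Cartier--Quillen idempotent $\varepsilon \colon L \otimes \Zp \to L \otimes \Zp$, an idempotent ring endomorphism whose $R$-points cut out the $p$-typical laws; hence $p$-typical formal group laws are represented by the retract $V := \mathrm{im}(\varepsilon) \subseteq L \otimes \Zp$, and it remains to show that $V$ is polynomial.

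Working rationally first: over a $\mathbb{Q}$-algebra a $p$-typical formal group law is recovered freely from its logarithm $\sum_{i \geq 0} \ell_i x^{p^i}$ with $\ell_0 = 1$ and the $\ell_i$ unconstrained, so $V \otimes \mathbb{Q} \iso \mathbb{Q}[\ell_1, \ell_2, \ldots]$. Introduce the Hazewinkel generators $\tilde{v}_n$ by the recursion
\[ p\,\ell_n = \sum_{i=0}^{n-1} \ell_i\, \tilde{v}_{n-i}^{p^i} \]
(with $\ell_0 = 1$); this can be solved triangularly to express each $\ell_n$ as a polynomial in $\tilde{v}_1, \ldots, \tilde{v}_n$ over $\mathbb{Q}$, and inverted to express each $\tilde{v}_n$ in terms of $\ell_1, \ldots, \ell_n$, so that $V \otimes \mathbb{Q} \iso \mathbb{Q}[\tilde{v}_1, \tilde{v}_2, \ldots]$.

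The main obstacle is to promote this to an integral statement. One must show that the formal group law whose logarithm is $\sum_i \ell_i x^{p^i}$, with the $\ell_i$ determined as above, in fact has all its coefficients in the subring $\Zp[\tilde{v}_1, \tilde{v}_2, \ldots] \subseteq \mathbb{Q}[\ell_1, \ell_2, \ldots]$, and that this subring is exactly $V$. This is precisely the content of Hazewinkel's functional equation lemma: the recursion above is equivalent to $\log_F$ satisfying a functional equation of the form to which that lemma applies, and the lemma then guarantees that $x +_F y = \log_F^{-1}\!\big(\log_F(x) + \log_F(y)\big)$ and $[p]_F(x)$ have coefficients in $\Zp[\tilde{v}_1, \tilde{v}_2, \ldots]$. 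Since $\Zp[\tilde{v}_1, \tilde{v}_2, \ldots]$ and $V$ have the same rationalization and both embed in the torsion-free ring $\mathbb{Q}[\ell_1, \ell_2, \ldots]$, this forces $V = \Zp[\tilde{v}_1, \tilde{v}_2, \ldots] =: \tV$, giving $FGL(R) \iso Hom_{\Zp}(\tV, R)$ for every $\Zp$-algebra $R$. The same functional-equation input supplies the classifying map in the opposite direction: an arbitrary $p$-typical $F$ over $R$ determines unique values of the $\tilde{v}_n$, read off from its $p$-series (or from $\log_F$ after inverting $p$), and these assemble into the unique homomorphism $\tV \to R$ inducing $F$.

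Finally, rerun Steps 1--4 with degrees attached. For a formal group law homogeneous of degree $-2$ with $|x| = |y| = -2$, tracking degrees through $\log_F(x) = \sum_i \ell_i x^{p^i}$ gives $|\ell_n| = 2(p^n - 1)$, and the recursion for the $\tilde{v}_n$ then forces $|\tilde{v}_n| = 2(p^n - 1)$. Hence the graded universal $p$-typical formal group law is defined over $\BP = \Zp[v_1, v_2, \ldots]$ with $|v_i| = 2(p^i - 1)$, and the identical argument gives $\overline{FGL}(R_*) \iso Hom_{\Zp}(\BP, R_*)$ for graded $\Zp$-algebras $R_*$. Steps 1--3 and the grading bookkeeping are formal once Lazard's and Cartier's theorems are granted; the one genuinely substantive ingredient is the integrality claim of Step 4, i.e.\ the functional equation lemma.
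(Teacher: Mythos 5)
The paper records this theorem as classical and supplies no proof, so the benchmark is the standard argument you are reconstructing (Lazard's theorem, the Cartier--Quillen idempotent, and an integrality statement for explicit polynomial generators); your overall architecture is the right one. The decisive step, however, is not justified: from the fact that $\Zp[\tilde{v}_1,\tilde{v}_2,\ldots]$ and the retract $V=\mathrm{im}(\varepsilon)$ have the same rationalization and both embed in the torsion-free ring $\mathbb{Q}[\ell_1,\ell_2,\ldots]$ you conclude $V=\Zp[\tilde{v}_1,\tilde{v}_2,\ldots]$, and this inference is invalid --- the distinct subrings $\Zp[pt]\subset\Zp[t]$ of $\mathbb{Q}[t]$ have the same rationalization. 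The functional equation lemma gives only one inclusion, $V\subseteq\Zp[\tilde{v}_1,\tilde{v}_2,\ldots]$ (the coefficients of the law with logarithm $\sum_i\ell_i x^{p^i}$ are integral polynomials in the $\tilde{v}_n$; to use this you should also record that $V$ is torsion-free and is generated by the coefficients of its universal $p$-typical law, both coming from its being a retract of $L\otimes\Zp$). What is missing is the reverse inclusion: each $\tilde{v}_n$ must be shown to be an \emph{integral} polynomial in those coefficients, i.e.\ the classifying map $V\to\Zp[\tilde{v}_1,\tilde{v}_2,\ldots]$ of Hazewinkel's formal group law must be shown surjective. That generation statement is the substantive content of Hazewinkel's (equivalently Quillen's) theorem and is normally proved by an indecomposables computation, e.g.\ identifying $\tilde{v}_n$ modulo $(p,\text{decomposables})$ with a coefficient of the universal law or of its $p$-series; without it, $V$ could a priori be a proper subring such as $\Zp[p\tilde{v}_1,\tilde{v}_2,\ldots]$.

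Two secondary points. Your proposed inverse construction ("read off the $\tilde{v}_n$ from the $p$-series, or from $\log_F$ after inverting $p$") is not sound as stated: the Hazewinkel generators are not extracted from the $p$-series, and the logarithm is unavailable over rings with torsion; once representability by $\tV$ is established, the classifying map exists by the universal property, so this sentence should be dropped or reformulated. Relatedly, your recursion $p\ell_n=\sum_{i=0}^{n-1}\ell_i\tilde{v}_{n-i}^{p^i}$ defines the Hazewinkel generators, whereas the paper fixes the Araki generators via the $p$-series formula of Remark~\ref{rem:defnvi}; for bare representability either system of polynomial generators suffices, but the Araki generators are extracted triangularly from the $p$-series of the universal $p$-typical law and hence visibly lie in $V$, giving one of the two needed inclusions for free (the other then being Araki's integrality theorem), and they match the conventions the paper uses later.
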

\begin{ex}[Honda formal group law] \label{ex:Honda}
Defining the ring homomorphism
\begin{eqnarray*}
\phi_n:\tV &\to&\Fpn\\
\tilde{v}_i&\mapsto&\left\{\begin{array}{rl}1&i=n\\0&i\neq n\end{array}\right.
\end{eqnarray*}
for $i\neq n$ gives the \emph{Honda formal group law} $\Gamma_n$ over $\Fpn$. This formal group law satisfies\[[p]_{\Gamma_n}(x)=x^{p^n}.\]
A theorem of Lazard says that $\Gamma_n$ is unique in that every formal group law of height $n$ over a separably closed field of characteristic $p$ is isomorphic to $\Gamma_n$, though this isomorphism might not be strict. 

\end{ex}

\begin{rem} \label{rem:defnvi}  The generators $\tilde{v}_i \in \tV$  are defined by the property that 
\[ [p]_{\mathcal{F}_{\tV}}(x) = px +_{\mathcal{F}_{\tV}} {\sum\limits_{i>1}}^{\mathcal{F}_{\tV}}\tilde{v}_ix^{p^i}\]
where $\mathcal{F}_{\tV}$ is the universal $p$-typical formal group law over $\tV$. 
Similarly, the $v_i \in \BP$ are defined by the property that 
\[  [p]_{\mathcal{F}_{\BP}}(\overline{x}) = p \overline{x} +_{\mathcal{F}_{\BP}} {\sum\limits_{i>0}}^{\mathcal{F}_{\BP}} v_i \overline{x}^{p^i}\]
where $\mathcal{F}_{\BP}$ is the universal $p$-typical formal group law over $\BP$ and $|\overline{x}| = -2$. The generators $\set{\tilde{v}_i: i>0}$ and $\set{v_i:i>0}$ are often called the \emph{Araki generators} in the literature. 
\end{rem}
Consider the functor 
\[ \rho: \Zp\textbf{-algebra}  \to \textbf{Graded }\Zp\textbf{-algebra}\]
which sends $R \mapsto R[u^{\pm1}]$, where $u$ is a formal variable in degree $-2$. If $F$ is a formal group law over $R$, then
\[ \overline{F}(\overline{x},\overline{y}) := uF(u^{-1}\overline{x}, u^{-1}\overline{y})\]
where $|\overline{x}| = |\overline{y}| = -2$, is a formal group law over $R[u^{\pm1}]$. Mapping $F \mapsto \overline{F}$ defines a natural transformation between the functors $FGL(-)$ and $\overline{FGL}(-) \circ \rho$.  Since $\overline{\mathcal{F}}_{\tV}$ is a formal group law over the graded ring $\tV[u^{\pm1}]$, we obtain a map 
\begin{equation} \label{theta1}
 \theta: \BP \to \tV[u^{\pm1}] 
 \end{equation}
and it follows from comparing the $p$-series (see Remark~\ref{rem:defnvi}) that $\theta(v_i) = u^{1- p^i}\tilde{v}_i$. 


We can also ask about how to represent \emph{groupoids} of formal group laws. We can do this in two ways, either by considering the groupoid of formal group laws with isomorphisms, or the smaller groupoid of formal group laws with strict isomorphisms.
\begin{lem}
Let $F$ be a $p$-typical formal group law and let $G$ be an arbitrary formal group law over a $\Zp$-algebra $R$, and let $f$ be an isomorphism from $F$ to $G$. Then $G$ is $p$-typical if and only if\[f^{-1}(x)=\sum\nolimits_{i\geq0}^F t_ix^{p^i},\]where $t_i\in R$ for every $i$ and $t_0\in R^{\times}$.
\end{lem}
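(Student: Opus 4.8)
The plan is to characterize $p$-typicality of $G$ through its logarithm, using the fact that $F$ and $G$ are related by the explicit isomorphism $f$, and then reverse-engineer the shape of $f^{-1}$ from the condition on $\log_G$. Since $R$ is torsion-free (it is the relevant case; in general one works rationally), I would pass to $R\otimes\mathbb{Q}$, where both formal group laws acquire logarithms. Write $\ell=\log_F$ and recall that $F$ is $p$-typical means $\ell(x)=\sum_{i\geq 0}l_ix^{p^i}$ with $l_0=1$. First I would establish the key identity: if $f:F\to G$ is an isomorphism with $f'(0)=t_0\in R^\times$, then $\log_G = \ell\circ f^{-1}$ up to the scalar $t_0$; more precisely $\log_G(x) = t_0^{-1}\cdot(\ell\circ f^{-1})(x)$ if we want the strict normalization, but since $G$ need not be $p$-typical via a \emph{strict} isomorphism, the cleanest statement is that $\log_G$ and $\ell\circ f^{-1}$ differ by a unit scalar, and $G$ is $p$-typical iff $\log_G(x)=\sum_{j\geq 0}m_jx^{p^j}$ with $m_0$ a unit (the normalization $m_0=1$ can always be arranged by rescaling the logarithm, which does not affect the underlying group law).

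The heart of the argument is then the following chain of equivalences. Setting $g=f^{-1}$, we have $\ell(g(x)) = \log_G(x)\cdot(\text{unit})$, so $G$ is $p$-typical $\iff$ $\ell(g(x))$ involves only powers $x^{p^j}$. Now I would substitute the ansatz $g(x)=\sum_{i\geq 0}^F t_i x^{p^i}$ and compute $\ell(g(x))$: since $\ell$ is \emph{additive} with respect to $+_F$ (that is what a logarithm does — it converts $+_F$ into ordinary $+$), we get
\[
\ell(g(x)) = \ell\!\left(\sum\nolimits_{i\geq 0}^F t_i x^{p^i}\right) = \sum_{i\geq 0}\ell(t_i x^{p^i}) = \sum_{i\geq 0}\sum_{k\geq 0} l_k t_i^{p^k} x^{p^{i+k}},
\]
which is manifestly a series in $x^{p^j}$ alone (collect terms with $i+k=j$). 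This proves the "if" direction: if $f^{-1}$ has the displayed form with $t_i\in R$ and $t_0\in R^\times$, then $\log_G$ — being a unit multiple of the above — has only $p$-power-indexed terms, with leading coefficient $l_0t_0 = t_0\in R^\times$, hence (after normalizing) $G$ is $p$-typical. For the "only if" direction, I would argue conversely: assume $\log_G(x)=\sum m_j x^{p^j}$, $m_0=1$. Then $g(x) = \ell^{-1}(\log_G(x))$, where $\ell^{-1}$ is the $F$-inverse-logarithm (exp of $F$), i.e. $\ell^{-1}$ converts $+$ back to $+_F$. Writing $\log_G(x) = \sum_j m_j x^{p^j} = \sum_j^{(\text{ordinary})} m_j x^{p^j}$ and applying $\ell^{-1}$ termwise — using that $\ell^{-1}(a+b)=\ell^{-1}(a)+_F\ell^{-1}(b)$ — gives $g(x) = \sum_j^F \ell^{-1}(m_j x^{p^j})$, and since each $\ell^{-1}(m_j x^{p^j})$ is itself a $p$-typical-looking series $\sum_k^F s_{j,k}x^{p^{j+k}}$ (a general fact: $\ell^{-1}$ of a single monomial $cx^{p^j}$ is a $+_F$-sum of $p$-power terms starting in degree $p^j$, because $\ell$ is), one can reindex and collect to write $g(x)=\sum_i^F t_i x^{p^i}$ with $t_i\in R$ and $t_0 = m_0\cdot(\text{leading coeff of }\ell^{-1}) = 1\cdot 1$, a unit. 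One must check coefficients genuinely land in $R$ and not just $R\otimes\mathbb{Q}$: this follows because $g=f^{-1}\in R[[x]]$ by hypothesis, so the $t_i$, being determined by the coefficients of $g$ via triangular formulas with $\mathbb{Z}_p$-coefficients and the Araki/Hazewinkel structure, lie in $R$; alternatively invoke the standard $p$-typicalization apparatus.

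I expect the main obstacle to be the integrality bookkeeping in the "only if" direction — showing that the coefficients $t_i$ of the $+_F$-expansion actually lie in $R$ rather than merely in $R\otimes\mathbb{Q}$ — and the precise handling of the non-strict scalar $t_0$, which requires being careful that "$p$-typical" as defined via logarithms with $l_0=1$ is compatible with the existence of a merely-isomorphism (not strict isomorphism) $f$. The resolution is that rescaling $x\mapsto t_0 x$ is a strict-up-to-scalar change that preserves $p$-typicality of the group law while normalizing the logarithm, so one reduces to the strict case where the classical theory (Cartier's theorem, as in the representability statement above) applies directly. Everything else is formal manipulation of the additivity property of logarithms.
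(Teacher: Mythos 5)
The paper itself gives no proof of this lemma: it is quoted as standard background (it is Cartier's characterization of $p$-typicality, cf.\ the appendix of Ravenel's book \cite{Rav}), so there is no in-paper argument to compare yours against; I will judge your proposal against the standard one. Your logarithm argument is essentially sound in the setting where it can be run at all, namely torsion-free $R$ — which is in fact the only setting in which the paper's definition of $p$-typical (via $\log$) literally applies, so this scope is reasonable. The ``if'' direction is correct as written: $\log_F\circ f^{-1}$ converts $+_G$ to $+$, equals $t_0\log_G$ with $t_0=(f^{-1})'(0)\in R^{\times}$, and additivity of $\log_F$ together with $p$-typicality of $F$ shows it is supported on $p$-power exponents.

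Three points in the ``only if'' direction need tightening. First, bookkeeping of the unit: $t_0$ is $(f^{-1})'(0)$, not $f'(0)$, and $f^{-1}=\exp_F\bigl(t_0\log_G\bigr)$, not $\exp_F(\log_G)$; harmless, but the factor should be carried along. Second, ``reindex and collect'' is not a legitimate operation on $F$-sums: $a x^{p^n}+_F b x^{p^n}\neq (a+b)x^{p^n}$ in general, so you cannot merge same-degree terms of your double $F$-sum naively. The correct route is the triangular one you gesture at elsewhere: since $\log_F(f^{-1}(x))=t_0\log_G(x)$ is supported on $p$-powers, solve recursively for $t_n\in R\otimes\mathbb{Q}$ so that $\log_F\bigl(\sum\nolimits_{i\geq 0}^{F}t_ix^{p^i}\bigr)$ agrees with it degree by degree, then conclude $f^{-1}(x)=\sum\nolimits_{i\geq 0}^{F}t_ix^{p^i}$ from injectivity of $\log_F$ over $R\otimes\mathbb{Q}$. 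Third, the integrality you flag as the main obstacle has a clean inductive resolution: if $t_0,\dots,t_{n-1}\in R$, then $f^{-1}(x)-_F\sum\nolimits_{i<n}^{F}t_ix^{p^i}$ lies in $R[[x]]$ (all coefficients in sight are in $R$) and its lowest-order coefficient is exactly $t_n$, whence $t_n\in R$; this is sharper than invoking ``triangular formulas with $\Zp$-coefficients,'' since those formulas involve the coefficients of $F$, which lie in $R$ rather than $\Zp$. Finally, note that the standard proof (via Cartier's curve and Frobenius-operator formalism, or by reduction to a universal torsion-free case) is valid over an arbitrary $\Zp$-algebra, including rings with $p$-torsion, where the rational-logarithm argument breaks down; since the paper's definition of $p$-typicality presupposes torsion-freeness, your argument suffices for the paper's purposes, but it is genuinely less general than the classical statement.
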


If we want $f$ to be a strict isomorphism, then we must have $t_0=1$. In the context of graded $\Zp$-algebras, $t_i$ is forced to be in degree $2(p^i-1)$. Thus we can define a Hopf algebroid $(\BP,\BPBP)$ with\[\BPBP=\BP[t_1,t_2,\ldots:\vert t_i\vert=2(p^i-1)]\]which represents the functor 
$$(FGL(-),SI(-)): \textbf{Graded } \Zp\textbf{-algebras} \to \textbf{Groupoids}$$ 
which assigns a graded $\Zp$-algebra $R_*$ to the groupoid of $p$-typical formal group laws over $R_*$ with strict isomorphisms. Let $\eta_L, \eta_R: \BP \to \BPBP$ denote the left and the right units of the Hopf algebroid $\BPBP$. Note that the universal isomorphism $f: \eta_L^* \mathcal{F}_{\BP} = \mathcal{F}_{\BP} \to \eta_R^* \mathcal{F}_{\BP}$ satisfies the formula 
\[ f^{-1}(\overline{x}) = \overline{x} +_{\mathcal{F}_{\BP}} {\sum\limits_{i \geq 1}}^{\mathcal{F}_{\BP}} t_i\overline{x}^{p^i},\]
where $|\overline{x}| = -2$.

Similarly, one can consider the case where $R$ is ungraded and $f$ is an isomorphism that need not be strict. Thus we define
\[\tVT=\tV[\widetilde{t_0}^{\pm1},\widetilde{t_1},\widetilde{t_2},\ldots:\vert \widetilde{t_i}\vert=0],\]getting a Hopf algebroid $(\tV,\tVT)$ which represents the functor 
$$(FGL(-),I(-)): \Zp\textbf{-algebras} \to  \textbf{Groupoids}$$ which assigns a $\Zp$-algebra $R$ to the groupoid of $p$-typical formal group laws over $R$ with isomorphisms. In this case the universal isomorphism $\tilde{f}:\eta_L^* \mathcal{F}_{\tV} = \mathcal{F}_{\tV} \to \eta_R^*\mathcal{F}_{\tV}$ satisfies the formula 
\[ \tilde{f}^{-1}(x) = {\sum\limits_{i \geq 0}}^{\mathcal{F}_{\tV}} \widetilde{t_i}x^{p^i}.\]
Let us define
\begin{eqnarray*}
\overline{\mathcal{F}}_{\tV}(\overline{x},\overline{y})&=&u\mathcal{F}_{\tV}(u^{-1}\overline{x},u^{-1}\overline{y})\\
\hat{G}(\overline{x},\overline{y})&=&\widetilde{t_0}u \hspace{2pt} \eta_R^* \mathcal{F}_{\tV}(\widetilde{t_0}^{-1}u^{-1}\overline{x},\widetilde{t_0}^{-1}u^{-1}\overline{y}) \\
\hat{f}(\overline{x})&=& \widetilde{t_0}u\tilde{f}(u^{-1}\overline{x})
\end{eqnarray*}
where $|\overline{x}| = |\overline{y}| = -2$. It is easy to see that the triple $(\overline{\mathcal{F}}_{\tV}, \hat{f}, \hat{G})$ is an element of the groupoid $(FGL(\tVT[u^{\pm1}]), SI(\tVT[u^{\pm1}]))$. Hence the map $\theta$ of \eqref{theta1} can be extended to a left $\BP$-linear map 
\begin{equation} \label{theta2}
 \theta: \BPBP \to \tVT[u^{\pm1}].
 \end{equation}
Since
\begin{eqnarray*}
\hat{f}^{-1}(\overline{x}) &=&  u \tilde{f}^{-1}(\widetilde{t_0}^{-1}u^{-1} \overline{x})\\
  &=&  u({\sum\limits_{i \geq 0}}^{\mathcal{F}_{\tV}} \widetilde{t_i} \widetilde{t_0}^{-p^i}  u^{-p^i}\overline{x}^{p^i}) \\
  &=& {\sum\limits_{i \geq 0}}^{\overline{\mathcal{F}}_{\tV}} \widetilde{t_i} \widetilde{t_0}^{-p^i}  u^{1-p^i}\overline{x}^{p^i},
\end{eqnarray*}
and 
\[ \hat{f}^{-1}(\overline{x}) = \theta(\tilde{f}^{-1}(x)),\]
we get the formula 
\begin{equation} \label{thetati}
\theta(t_i) = \widetilde{t_i} \widetilde{t_0}^{-p^i}  u^{1-p^i}
\end{equation}

Now we briefly recall the notion of deformation, which arose in number theory, and has important implications for homotopy theory.
\begin{defn}
Let $k$ be a field of characteristic $p >0$ and $\Gamma$ a formal group law over $k$. A \emph{deformation} of $(k,\Gamma)$ to a complete local ring $B$ with projection $$\pi:B\to B/\mathfrak{m}$$ is a pair $(G,i)$ where $G$ is a formal group law over $B$ and $$i:k\to B/\mathfrak{m}$$ is a homomorphism satisfying $i\Gamma=\pi G$.
\end{defn} 
A morphism from $(G_1,i_1) \to (G_2,i_2)$ is defined only when $i_1=i_2$, in which case it consists of an isomorphism 
$$f:G_1\to G_2$$ of formal group laws over $B$ such that $$f(x)\equiv x \hspace{-.2em} \mod \mathfrak{m}.$$ Such morphisms are also called $\star$-isomorphisms. Note that the set $Def_{\Gamma}(B)$ of deformations of $(k,\Gamma)$ to $B$ with $\star$-isomorphisms forms a groupoid. The work of Lubin and Tate \cite{LT} guarantees the existence of a universal deformation. More precisely: 
\begin{thm}[Lubin-Tate] \label{thm:LT}
Let $\Gamma$ be a formal group law of finite height over a field $k$ of characteristic $p>0$. Then there exists a complete local ring $E(k,\Gamma)$ with residue field $k$ and a deformation $(F_{\Gamma},id)\in Def_{\Gamma}(E(k,\Gamma))$ such that for every $(G,i)\in Def_{\Gamma}(B)$, there is a unique continuous \footnote{A ring homomorphism of local ring is continuous if the image of the maximal ideal of the domain is contained in the maximal ideal of the codomain.} ring homomorphism  $\theta:E(k,\Gamma)\to B$ and a unique $\star$-isomorphism from $(G,i)$ to $(\theta F_{\Gamma}, i)$.
\end{thm}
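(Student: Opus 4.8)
This is the theorem of Lubin and Tate, and the plan is to reconstruct its proof along the usual deformation-theoretic lines; the two essential inputs are a \emph{rigidity} statement, which is where the finite-height hypothesis is used, and a \emph{tangent-and-obstruction} computation, which is what pins down the ring $E(k,\Gamma)$. By $\mathfrak{m}$-adic completeness it suffices to prove the universal property for Artinian local $B$ with residue field $k$ and then pass to the inverse limit over the quotients $B/\mathfrak{m}^N$; so the heart of the matter is an induction along a small extension $0\to I\to B\to B_0\to 0$ with $\mathfrak{m}_B I=0$ and $I\iso k$.

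\textbf{Rigidity.} First I would prove that between two deformations $(G_1,i),(G_2,i)$ of $(k,\Gamma)$ to a common complete local ring there is at most one $\star$-isomorphism; equivalently, that a $\star$-automorphism $f$ of a deformation $(G,i)$ is the identity. Reducing to $\mathfrak{m}^{N+1}=0$ and inducting on $N$, one writes $f(x)=x+_G\delta(x)$ with the coefficients of $\delta$ in the $k$-vector space $M:=\mathfrak{m}^N$, which is annihilated by $\mathfrak{m}$. Since $M\cdot M=0$ the operation $+_G$ is linear in $\delta$, so the identity $f(G(x,y))=G(f(x),f(y))$ collapses to $\delta(i\Gamma(x,y))=\delta(x)+\delta(y)$; that is, $\delta$ is a homomorphism from $i\Gamma$ to $\mathbb{G}_a\otimes_k M$. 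Because $i\Gamma$ has finite height $n$ we have $[p]_{i\Gamma}(x)=ux^{p^n}+\cdots$ with $u$ a unit, and applying $\delta$ to this identity, together with $[p]_{\mathbb{G}_a}=0$ in characteristic $p$, forces $\delta=0$. This is the only place the finite-height hypothesis enters, and I regard it as the conceptual crux even though the computation itself is short.

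\textbf{Construction.} Let $W(k)$ be the ring of Witt vectors of $k$ (for brevity I take $k$ perfect, which covers every case of interest; in general one uses a Cohen ring of $k$), so $W(k)$ is the unique complete discrete valuation ring with maximal ideal $(p)$ and residue field $k$. Using that $FGL(-)$ is represented by $\tV=\Zp[\tilde v_1,\tilde v_2,\dots]$, together with Cartier's theorem that $\Gamma$ is strictly isomorphic over $k$ to a $p$-typical formal group law whose classifying map $\tV\to k$ necessarily sends $\tilde v_i\mapsto 0$ for $i<n$ and $\tilde v_n$ to a unit because the height is $n$, I would set $E(k,\Gamma):=W(k)[[u_1,\dots,u_{n-1}]]$ and take $F_\Gamma$ to be the $p$-typical formal group law classified by the lift $\tV\to E(k,\Gamma)$ that sends $\tilde v_i\mapsto u_i$ for $1\le i\le n-1$, sends $\tilde v_n$ to a chosen unit lift, and sends the remaining $\tilde v_i$ to arbitrary lifts, modified by conjugating with a lift of the Cartier strict isomorphism so that its reduction modulo $\mathfrak{m}_E$ is exactly $\Gamma$. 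This gives a deformation $(F_\Gamma,id)\in Def_\Gamma(E(k,\Gamma))$, and the role of the parameters $u_1,\dots,u_{n-1}$ is that their first-order directions span the space of infinitesimal deformations.

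\textbf{Universal property.} Given $(G,i)\in Def_\Gamma(B)$ with $B$ as above, by induction there is a unique continuous $\theta_0\colon E(k,\Gamma)\to B_0$ and a unique $\star$-isomorphism from $\theta_0 F_\Gamma$ onto the reduction $G_0$; transporting $G$ across a lift of that $\star$-isomorphism, I may assume $G$ itself reduces to $\theta_0 F_\Gamma$ over $B_0$. Continuous lifts $\theta\colon E(k,\Gamma)\to B$ of $\theta_0$ exist, the map $W(k)\to B$ being automatic and unique, and they form a torsor under $\mathrm{Der}^c_{W(k)}(E(k,\Gamma),I)\iso I^{n-1}$. For any such $\theta$, the formal group laws $\theta F_\Gamma$ and $G$ over $B$ agree over $B_0$, so their difference is a symmetric $2$-cocycle whose class $\omega(\theta)\in H^2_{\mathrm{sym}}(i\Gamma;I)\iso I^{n-1}$ vanishes precisely when $\theta F_\Gamma$ and $G$ are $\star$-isomorphic. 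Replacing $\theta$ by $\theta+d$ changes $\omega(\theta)$ by $\kappa(d)$, where $\kappa\colon\mathrm{Der}^c_{W(k)}(E(k,\Gamma),I)\to H^2_{\mathrm{sym}}(i\Gamma;I)$ is the Kodaira--Spencer map of the family $F_\Gamma$, so everything comes down to showing that $\kappa$ is an isomorphism. That in turn rests on the computation that $H^2_{\mathrm{sym}}(i\Gamma;k)$ is $(n-1)$-dimensional --- finite exactly because the height is finite --- on unobstructedness of deformations of formal group laws (Lazard, since $\tV$ is a polynomial ring), and on the fact that the $u_j$ were chosen so that the $\partial F_\Gamma/\partial u_j$ represent a basis. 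Granting this, there is a unique $\theta$ with $\omega(\theta)=0$, hence with $\theta F_\Gamma$ and $G$ $\star$-isomorphic, and that $\star$-isomorphism is unique by rigidity; this closes the induction, and passing to the limit over $B/\mathfrak{m}^N$ handles a general complete local $B$. The Kodaira--Spencer computation is the step I expect to be the main obstacle.
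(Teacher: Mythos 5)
The paper does not prove this statement at all: it is quoted as the classical theorem of Lubin and Tate and used as a black box, with the proof deferred to the reference \cite{LT}. So the only comparison available is with the classical argument, and your outline follows exactly that route. Its skeleton is sound: reduction to Artinian quotients and induction along small extensions, rigidity of $\star$-isomorphisms via the $p$-series of a finite-height formal group law (this is indeed the only place height is used, and your computation of it is correct), the description of $E(k,\Gamma)$ as $W(k)[[u_1,\dots,u_{n-1}]]$ with $F_\Gamma$ built from a $p$-typical coordinate (which matches Remark~\ref{EkGamma} and the explicit Honda-case formulas later in the paper), and the small-extension step organized as a torsor of lifts of $\theta_0$ under continuous derivations mapping, via Kodaira--Spencer, to the symmetric $2$-cocycle group that houses the obstruction.

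The genuine gap is the step you explicitly defer: proving that the group of $\star$-isomorphism classes of lifts along a small extension with kernel $I$ is isomorphic to $I^{n-1}$ (equivalently, that the relevant symmetric $2$-cocycle group over $k$ is $(n-1)$-dimensional) and that the partial derivatives $\partial F_\Gamma/\partial u_j$ of your chosen universal deformation represent a basis, i.e.\ that the Kodaira--Spencer map is bijective. This rests on Lazard's symmetric $2$-cocycle lemma together with a degree-by-degree analysis of the $p$-series, and it is precisely the mathematical content of Lubin--Tate's paper; without it neither existence nor uniqueness of $\theta$ is established, so as written your argument is an outline rather than a proof. Two smaller points also need a word: the theorem allows an arbitrary field $k$ of characteristic $p$, so the non-perfect case must be handled with a Cohen ring as you note in passing; and the paper's definition of a deformation only requires a homomorphism $i\colon k\to B/\mathfrak{m}$ rather than that $B$ have residue field $k$, so your induction over $B/\mathfrak{m}^N$ should be supplemented by the observation that, for $k$ perfect, the unique continuous lift $W(k)\to B$ of $i$ (formal \'etaleness of $W(k)$ over $\mathbb{Z}_p$) reduces the general case to the one you treat.
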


\begin{rem}\label{EkGamma}
It is well-known (see~\cite{LT}) that if $k$ is a perfect field and $\Gamma$ has height $n$, then a choice of $F_{\Gamma}$ determines an isomorphism\[E(k,\Gamma)\cong W(k)[[u_1,\ldots,u_{n-1}]]\]of complete local rings, where $W(k)$ is the ring of Witt vectors on $k$. 
\end{rem}


The automorphism group $Aut(\Gamma/k)$ of $\Gamma$ acts on $E(k,\Gamma)$ as follows (also see \cite[\S1]{DH1}). Let $\gamma \in k[[x]]$ be an invertible power series.
Choose an invertible power series $\widetilde{\gamma} \in E(k,\Gamma)[[x]]$ as a lift of $\gamma$ and define $\widetilde{F_{\gamma}}$ over $E(k,\Gamma)$ by\[\widetilde{F_{\gamma}}(x,y):=\widetilde{\gamma}^{-1}(F_{\Gamma}(\widetilde{\gamma}(x),\widetilde{\gamma}(y))).\]
Note that the lift $\widetilde{F_{\gamma}}$ depends on the choice of lift $\widetilde{\gamma}$. Since 
$(\widetilde{F_{\gamma}},id)\in Def_{\Gamma}(E(k,\Gamma))$, the Lubin-Tate theorem gives us a unique homomorphism\[\widetilde{\phi}_{\gamma}:E(k,\Gamma)\to E(k,\Gamma)\] and a unique $\star$-isomorphism\[\hat{\gamma}: \widetilde{F_{\gamma}}\to\widetilde{\phi}_{\gamma}F_{\Gamma}.\]
The composite 
\[ f_{\gamma}: F_{\Gamma} \overset{\widetilde{\gamma}^{-1}}{\longrightarrow} \widetilde{F_{\gamma}} \overset{\hat{\gamma}}{\longrightarrow} \widetilde{\phi}_{\gamma}F_{\Gamma}\]
does not depend on the choice of $\widetilde{\gamma}$ and is an element of the groupoid $$(FGL(E(k,\Gamma)),I(E(k,\Gamma))).$$ 
Therefore the classifying map for $F_{\Gamma}$
\[ \widetilde{\theta_{\Gamma}}: \tV \to E(k,\Gamma),\]
can be extended to a left $\tV$-linear map 
\[ \widetilde{\theta_{\Gamma}}: \tVT \to Map^{c}(Aut(\Gamma/k),E(k,\Gamma)). \]
Let us simply denote $\theta_{\Gamma}(\widetilde{t_i})(\gamma)$ by $\widetilde{t_i}(\gamma)$ for $\gamma \in Aut(\Gamma/k)$.  The elements $\tilde{t_i}(\gamma)$ satisfy the equation 
\[ f_{\gamma}^{-1}(x) = {\sum \limits_{i\geq 0}}^{F_{\Gamma}} \widetilde{t_i}(\gamma) x^{p^i}. \]

One can also consider the graded formal group law $\overline{\Gamma}$ over $k[u^{\pm1}]$.  Note that $ Aut(\Gamma/ k) \iso Aut(\overline{\Gamma}/k[u^{\pm1}])$ via the invertible map $\gamma(-) \mapsto  u \gamma (u^{-1} -)$. One can similarly define the graded universal deformation formal group law $\overline{F}_{\Gamma}$ over the graded ring $E(k,\Gamma)[u^{\pm1}]$. Let $\gamma \in Aut(\Gamma/ k)$ act on $E(k,\Gamma)[u^{\pm1}]$ via the ring homomorphism 
$\phi_{\gamma}: E(k,\Gamma)[u^{\pm1}] \to E(k,\Gamma)[u^{\pm1}]$ such that 
\[ 
\phi_{\gamma}(x) = \left\lbrace \begin{array}{ccc}
\widetilde{\phi}_{\gamma}(x) & \text{ if $x \in E(k,\Gamma)$} \\ 
\widetilde{t_0}(\gamma) x & \text{ if $x = u$.} 
\end{array} \right.
\]
Notice that 
\[ \phi_{\gamma} \overline{F}_{\Gamma}(\overline{x}, \overline{y})  = \widetilde{t_0}(\gamma) u \widetilde{\phi}_{\gamma}F_{\Gamma}(\widetilde{t_0}(\gamma)^{-1}  u^{-1}\overline{x},  \widetilde{t_0}(\gamma)^{-1}u^{-1}\overline{y})  \]
and 
\[ \hat{f}_{\gamma} = \widetilde{t_0}(\gamma)^{-1} u f_{\gamma}(u^{-1} \overline{x})\]
is a strict isomorphism between $\overline{F}_{\Gamma}$ and  $\phi_{\gamma} \overline{F}_{\Gamma}$. Thus we have a left $\BP$-linear map 
\begin{equation} \label{map:theta}
 \theta_{\overline{\Gamma}}: \BPBP \to Map^c(Aut(\Gamma/k), E(k,\Gamma)[u^{\pm1}]). 
 \end{equation}
It can be easily checked that $\theta_{\overline{\Gamma}}$ is identical to the composite map 
\[ \BPBP \overset{\theta}\to \tVT[u^{\pm1}] \overset{ \widetilde{\theta_{\Gamma}}[u^{\pm1}]}{\longrightarrow} Map^c(Aut(\Gamma/k), E(k,\Gamma)[u^{\pm1}]). \]
Let us denote the map $\theta_{\overline{\Gamma}}(t_i)(-)$  simply by $t_i(-)$. It follows from \eqref{thetati} that 
\begin{equation} \label{titilde}
t_i(\gamma) = \widetilde{t_i}(\gamma) \widetilde{t_0}(\gamma)^{-p^i} u^{1-p^i}  
\end{equation}
for $\gamma \in Aut(\Gamma/k)$.  Also keep in mind that $f_{\gamma}$ fits into the commutative diagram 
\[ 
\xymatrix{
F_{\Gamma} \ar@{~>}[d]\ar[r]^{f_{\gamma}} & \widetilde{\phi}_{\gamma} F_{\Gamma} \ar@{~>}[d] \\
\Gamma \ar[r]_{\gamma} & \Gamma
}
\]
where the vertical squiggly arrows are reduction modulo $m = (p, u_1, \dots, u_{n-1}) $. Thus for $\gamma^{-1} = a_0x +_{\Gamma} a_1x^p +_{\Gamma} + a_2 x^{p^2}+_{\Gamma} \dots \in k[[x]]$, we have 
\begin{equation} \label{timodmax}
\widetilde{t_i}(\gamma) \equiv a_i \hspace{-8pt}\mod m \text{ and } t_i(\gamma) \equiv a_i a_0^{-p^i}u^{1-p^i} \hspace{-8pt}\mod m.
\end{equation}
It follows from \cite{Rav}[Corollary~$4.3.15$] that when $\Gamma$ has height $n$ and $k \leq n$,  
\begin{equation} \label{product-tk}
\widetilde{t_k}(\gamma_1 \gamma_2) \equiv {\sum\limits_{i=0}^k} \widetilde{t_i}(\gamma_1)\widetilde{t_{k-i}}(\gamma_2)^{p^{i}} \mod m.
\end{equation}


Now let's focus on the Honda formal group law $\Gamma_n$ over $\Fpn$ and let $F_n$ denote its universal deformation. By Remark~\ref{EkGamma}, we have\[E(\Fpn,\Gamma_n)=W(\Fpn)[[u_1,\ldots,u_{n-1}]],\]where $W(\Fpn)$ are the Witt vectors of $\Fpn$, which has an action of the \emph{small Morava stabilizer group}\[\Sn:=Aut(\Gamma_n/\Fpn).\]
Note that the map $\phi_n$ of Example~\ref{ex:Honda} which defines the Honda formal group law factors through $\mathbb{F}_p$. Therefore $\Gamma_n$ has coefficients in $\mathbb{F}_p$. Consequently, the \emph{big Morava stabilizer group}\[\Gn_n:=Aut(\Gamma_n/\Fp)=\Sn\rtimes Gal(\Fpn/\Fp)\] 
acts on $E(\Fpn,\Gamma_n)$. The Lubin-Tate universal formal group law $F_n$ over $E(\Fpn,\Gamma_n)$ is given by the ring homomorphism
\begin{eqnarray*}
\theta:\tV&\to&E(\Fpn,\Gamma_n) \\
\tilde{v}_i&\mapsto&\left\{\begin{array}{rl}u_i&i<n\\1&i=n\\0&i>n\end{array}\right.
\end{eqnarray*}
which means that 
\[ [p]_{\Gamma_n}(x) = p x +_{\Gamma_n} u_1 x^p +_{\Gamma_n} \dots +_{\Gamma_n} u_{n-1} x^{p^{n-1}} +_{\Gamma_n} x^{p^n} . \]
We also have a graded formal group law $\overline{F}_n$ over the graded ring $(E_n)_* :=  E(\Fpn,\Gamma_n)[u^{\pm1}]$ which is given by the ring homomorphism 
\begin{eqnarray*}
\theta:\BP&\to&(E_n)_* \\
v_i&\mapsto&\left\{\begin{array}{rl}u_iu^{-p^i}&i<n\\ u^{-p^n}&i=n\\0&i>n\end{array}\right.
\end{eqnarray*}
By the Landweber exact functor theorem,\[(E_n)_*(-):=(E_n)_*\otimes_{\BP}\BP(-)\]is a homology theory, thus it is represented by a spectrum $E_n$, known as \emph{Morava $E$-theory}. By a theorem of Devinatz and Hopkins (see~\cite{DH2}), the action of $\Gn_n$ on $(E_n)_*$ lifts to one on $E_n$ itself whose homotopy fixed point spectrum is \[ (E_n)^{h\Gn_n}\simeq L_{K(n)}S^0.\]
This gives us a homotopy fixed point spectral sequence
\begin{equation} \label{eqn:homotopyfix}
E_2^{s,t} := H^s(\Gn_n;(E_n)_t)\To\pi_{t-s}L_{K(n)}S^0
\end{equation}
whose $E_2$ page can be found using a Lyndon-Hochschild-Serre spectral sequence
\[H^{s_1}(Gal(\Fpn/\Fp);H^{s_2}(\Sn;(E_n)_*))\To H^{s_1+s_2}(\Gn_n;(E_n)_*),\]which as a consequence of Hilbert's Theorem~$90$ reduces to
\begin{equation}\label{Galois}H^s(\Gn_n;(E_n)_*)=H^s(\Sn;(E_n)_*)^{Gal(\Fpn/\Fp)}.\end{equation}

\section{The $\BPBP$-comodule $\BP Z$}\label{Sec:E2Z}
For every $Z \in \ZZ$, there is, by definition, an isomorphism$$H_*Z\iso (A(2) \modmod E(Q_2))_* = \Ft[\xi_1, \xi_2]/( \xi_1^8, \xi_2^4 )$$ 
of $A(2)_*$-comodules \cite{BE}. We will use this fact to determine the $\BPBP$-comodule structure of $\BP Z$. One can use the Adams spectral sequence 
\[ E_2^{s,t} = Ext^{s,t}_{A}(H^*BP \otimes H^*Z, \Ft) \Rightarrow BP_{t-s}Z\]
to compute $\BP Z$ as a $\BP $-module. Note that $$H^*BP = A \modmod E(Q_0, Q_1, Q_2, \dots)$$ where $Q_i$ are the Milnor primitives. By a change of rings, the $E_2$-page of the above Adams spectral sequence is isomorphic to 
\begin{equation} \label{ASSBPZ}
 E_2^{s,t} = Ext_{A}^{s,t}(H^*BP \otimes H^*Z, \Ft) \iso Ext^{s,t}_{E(Q_0,Q_1, \dots)}(H^*Z, \Ft). 
 \end{equation}
Let $g$ denote the generator of $H_*Z$ in degree $0$. As an $E(Q_0,Q_1,Q_2)$-module, \linebreak $A(2)\modmod E(Q_2)$ is a direct sum of $8$ copies of $E(Q_0,Q_1)$ generated by the elements in the set
\[\mathcal{G} = \lbrace \text{$g^*$, $(\xi_1^2g)^*$, $(\xi_1^4g)^*$, $(\xi_1^6g)^*$, $(\xi_2^2g)^*$, $(\xi_1^2\xi_2^2g)^*$, $(\xi_1^4\xi_2^2g)^*$, $(\xi_1^6\xi_2^2g)^*$} \rbrace. \]

Since $H^*Z \iso_{A(2)} A(2) \otimes_{E(Q_2)}\Ft$ and $Q_2$ is in the center of $A(2)$, $Q_2$ acts trivially on $H^*Z$. Using the iterative formula
\[ Q_i = Sq^{2^i}Q_{i-1} + Q_{i-1}Sq^{2^i}\]
one can inductively argue that $Q_i$ for $i \geq 2$ acts trivially on $H^*Z$. Thus, we have completely determined $H^*Z$ as a module over $E(Q_0, Q_1, \dots)$ from its $A(2)$-module structure. Thus as an $E(Q_0, Q_1, \dots)$-module 
\[ H^*Z \iso E(Q_0, Q_1, \dots ) \otimes_{E(Q_2, Q_3, \dots) }\mathcal{G} \]
and therefore the $E_2$-page of  \eqref{ASSBPZ} is isomorphic to 
\[ E_2^{s,t} \iso \Ft[v_2, v_3, \dots ] \otimes \mathcal{G}^* \]
where $v_i$ has bidegree $(s,t) = (1, |Q_i|) = (1, 2^{i+1} -1)$. Due to sparseness, the Adams spectral sequence \eqref{ASSBPZ} collapses at the $E_2$ page. Hence, as a $\BP $-module
\begin{equation} \label{eqn:BP2Z}
\BP Z \iso \BP /( 2, v_1)\langle x_0, x_2,x_4,x_6,y_6,y_8,y_{10},y_{12} \rangle, 
\end{equation}
 where $x_i$ and $y_i$ are generators in degree $i$ chosen in such a way that the map $\BP Z \to H_*Z$ sends:
\[ \begin{array}{llll}
x_0 \mapsto g  && y_6\mapsto \xi_2^2g\\
x_2 \mapsto \xi_1^2g && y_8 \mapsto\xi_1^2 \xi_2^2g\\
x_4 \mapsto \xi_1^4g && y_{10} \mapsto\xi_1^4 \xi_2^2g\\ 
x_6 \mapsto \xi_1^6g && y_{12} \mapsto\xi_1^6 \xi_2^2g.
\end{array} \]
This identification allows us to infer the $\BPBP$-comodule structure of $\BP Z$ from the $A(2)_*$-comodule structure of $H_*Z$ via the diagram 
\[ 
\xymatrix{
\BP Z \ar[r]^-{\psi} \ar[d]  & \BPBP \otimes_{\BP} \BP Z \ar[d]\\
 H_*Z \ar[r]_{\psi_2} & A(2)_* \otimes H_*Z
}
\]
 First notice that the co-action map 
\[\psi_2: H_*Z \to A(2)_* \otimes H_*Z\]sends 
\begin{equation} \label{eqn:coopA2Z}
\begin{array}{llll}
g &\mapsto& 1|g \\
\xi_1^2g &\mapsto& \xi_1^2|g + 1| \xi_1^2g \\
\xi_1^4g &\mapsto& \xi_1^4|g + 1|\xi_1^4g \\
\xi_1^6g &\mapsto& \xi_1^6|g + \xi_1^4|\xi_1^2g + \xi_1^2|\xi_1^4g + 1| \xi_1^6g \\
\xi_2^2g &\mapsto& \xi_2^2|g + \xi_1^4|\xi_1^2g + 1|\xi_2^2g \\
\xi_1^2\xi_2^2g &\mapsto& \xi_1^2\xi_2^2|g + (\xi_1^6+ \xi_2^2)|\xi_1^2g + \xi_1^2|\xi_2^2g + \xi_1^4|\xi_1^4g + 1|\xi_1^2\xi_2^2g \\ 
\xi_1^4\xi_2^2g &\mapsto& \xi_1^4\xi_2^2|g + \xi_1^8|\xi_1^2g + \xi_1^4|\xi_2^2g + \xi_2^2|\xi_1^4g + \xi_1^4|\xi_1^6g + 1|\xi_1^4\xi_2^2g \\
\xi_1^6\xi_2^2g &\mapsto&  \xi_1^6\xi_2^2|g + (\xi_1^4\xi_2^2 + \xi_1^{10})|\xi_1^2g + (\xi_1^2\xi_2^2 + \xi_1^8)|\xi_1^4g +\xi_1^6| \xi_2^2g\\
&& + (\xi_2^2 + \xi_1^6)|\xi_1^6g  + \xi_1^4|\xi_1^2\xi_2^2g + \xi_1^2|\xi_1^4\xi_2^2g + 1|\xi_1^6\xi_2^2g   
\end{array}   
\end{equation}
The map 
\[ \BPBP \longrightarrow A_* \]
sends $v_i \mapsto 0$ and $t_i \mapsto \zeta_i^2$, where $\zeta_i$ is the image of $\xi_i$ under the canonical antiautomorphism of $A_*$. Moreover $A(2)_* \iso A_*/(\zeta_1^8, \zeta_2^4, \zeta_3^2, \zeta_4, \zeta_5, \dots)$. Therefore $\psi_2$,
along with the fact that  $(2,v_1) \subset \BP$ acts trivially on $\BP Z$, completely  determines the composite map 
\[ \BP Z \overset{\psi}{\to} \BPBP \otimes_{\BP} \BP Z \to \BPBP/\mathcal{I}_2 \otimes_{\BP} \BP Z \]  
where \[\mathcal{I}_2 = (v_2,v_3, \dots, t_1^4, t_2^2, t_3, t_4, \dots)  \subset \BPBP. \]  
Note that all elements in the generating set $\set{x_0, x_2, x_4, x_6, y_6, y_8, y_{10}, y_{12}}$ of $\BP Z$  have internal degrees between $0$ and $12$, whereas $|t_j|> 12$ and $|v_j|>12$ when $j\geq 3$. Therefore, for $j \geq 3$, $t_j$ and $v_j$ do not appear in the expression for $\psi(x_i)$ and $\psi(y_i)$, though $v_2$ may be present. Using \eqref{eqn:coopA2Z} and the fact that $\zeta_1^2= \xi_1^2$ and $\zeta_2^2 = \xi_2^2 + \xi_1^6$, we easily derive the coaction map $\psi$ on the generators of $\BP Z$ modulo $(v_2,t_1^4,t_2^2) \in \BPBP$. We get: 
\begin{equation} \label{eqn:coopmodv2}
\begin{array}{lll}
\psi(x_0) &=& 1|x_0 \\
\psi(x_2) &=& t_1|x_0 + 1|x_2 \\
\psi(x_4) &=& t_1^2|x_0 + 1|x_4 \\
\psi(x_6) &\equiv& t_1^3|x_0 + t_1^2|x_2 + t_1|x_4 + 1|x_6 \\
\psi(y_6) &\equiv& (t_2 + t_1^3)|x_0 + t_1^2|x_2 + 1|y_6 \\
\psi(y_8) &\equiv&   t_1t_2|x_0 + t_2|x_2 + t_1^2|x_4 + t_1|y_6 + 1|y_8 \\
\psi(y_{10}) &\equiv& t_1^2t_2|x_0  + (t_1^3+t_2)|x_4 + t_1^2|x_6 + t_1^2|y_6 + 1|y_{10}\\
\psi(y_{12}) &\equiv&  t_1^3t_2|x_0 + t_1^2t_2|x_2 + t_1t_2|x_4 + t_2|x_6 + t_1^3|y_6 \\
&& + t_1^2|y_8 + t_1|y_{10} + 1|y_{12}  
\end{array}
\end{equation}
\begin{lem} \label{lem:independant} For any $Z \in \ZZ$, $\BP Z$ has one of the four different $\BPBP$-comodule structures given below:
\begin{equation} \label{eqn:coopfour}
\begin{array}{lll}
\psi(x_0) &=& 1|x_0 \\
\psi(x_2) &=& t_1|x_0 + 1|x_2 \\
\psi(x_4) &=& t_1^2|x_0 + 1|x_4 \\
\psi(x_6) &=& t_1^3|x_0 + t_1^2|x_2 + t_1|x_4 + 1|x_6 \\
\psi(y_6) &=& (t_2 + t_1^3)|x_0 + t_1^2|x_2 + 1|y_6 \\
\psi(y_8) &=&  (a t_1^4 + t_1t_2)|x_0 + t_2|x_2 + t_1^2|x_4 + t_1|y_6 + 1|y_8 \\
\psi(y_{10}) &=& (t_1^5 + t_1^2t_2)|x_0 + t_1^4|x_2 + (t_1^3+t_2)|x_4 + t_1^2|x_6 + t_1^2|y_6 + 1|y_{10}\\
\psi(y_{12}) &=& ((b+1) t_1^6 + t_1^3t_2 + (a+b)t_2^2)|x_0 + t_1^2t_2|x_2 + (bt_1^4 +t_1t_2)|x_4 + t_2|x_6 + t_1^3|y_6 \\
&& + t_1^2|y_8 + t_1|y_{10} + 1|y_{12}  
\end{array}
\end{equation}
where $a,b \in \Ft$. 
\end{lem}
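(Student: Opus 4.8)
The plan is to pin down the indeterminacy in the coaction map $\psi$ that remains after the computation modulo $(v_2, t_1^4, t_2^2)$ recorded in \eqref{eqn:coopmodv2}, and to show that the coassociativity and counit axioms of a $\BPBP$-comodule cut the possibilities down to the two-parameter family in \eqref{eqn:coopfour}. First I would observe that for degree reasons the only monomials in $\BPBP$ that can appear in $\psi$ of a generator of internal degree $i \le 12$ are the monomials in $t_1, t_2, v_2$ of the appropriate degree (nothing involving $t_j$ or $v_j$ for $j \ge 3$). Comparing with \eqref{eqn:coopmodv2}, the only ambiguity is in the coefficients of the monomials lying in the ideal $(t_1^4, t_2^2, v_2)$: these are $t_1^4$ (degree $12$), $t_2^2$ (degree $12$), $v_2$ (degree $6$), $v_2 t_1$ (degree $8$), $v_2 t_1^2$ (degree $10$), $v_2 t_1^3$ and $v_2 t_2$ (degree $12$). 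So I would write the most general coaction consistent with \eqref{eqn:coopmodv2} by adding unknown $\Ft$-coefficients times these monomials (tensored against the appropriate lower generators) to each of $\psi(y_6), \psi(y_8), \psi(y_{10}), \psi(y_{12})$, and then impose the constraints.

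The key computational step is coassociativity: the two composites $\BP Z \to \BPBP \otimes_{\BP} \BP Z \rightrightarrows \BPBP \otimes_{\BP} \BPBP \otimes_{\BP} \BP Z$ (apply $\psi$ again on the right factor, versus apply the comultiplication $\Delta$ of $\BPBP$) must agree. I would feed in the known formulas for $\Delta(t_1) = t_1 \otimes 1 + 1 \otimes t_1$, $\Delta(t_2) = t_2\otimes 1 + t_1 \otimes t_1^p + 1 \otimes t_2$ (at $p=2$), and $\eta_R(v_2) = v_2 + \dots$ modulo $(2, v_1)$, i.e. $\eta_R(v_2) \equiv v_2 + v_1 t_1^2 - \dots \equiv v_2$ here since $v_1 = 0$; more to the point I need $\eta_R(v_2) \equiv v_2 \pmod 2$ and the Hazewinkel/Araki relation expressing $t_1^{p+1}$-type terms. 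Matching coefficients of the various monomials $t_1^i t_2^j \otimes t_1^k t_2^\ell$ (and $v_2$-monomials) against the lower generators will give a system of linear equations over $\Ft$ in the unknown coefficients; solving it should force all but two of them, which I would name $a$ and $b$, and express the remaining ones in terms of $a$ and $b$ exactly as displayed in \eqref{eqn:coopfour}. Some of the unknowns coupling $v_2$ to lower generators should be eliminated outright (for instance by comparing against $\psi(y_6)$, whose $v_2$-coefficient is already forced), which is why no explicit $v_2$ survives in \eqref{eqn:coopfour} and instead $v_2$ gets absorbed via the relation $v_2 = u^{-3}$-type substitution — more precisely, one uses that modulo $(2,v_1)$ one has $\eta_R(v_2) = v_2$, so $v_2$-terms in $\psi$ are constrained to be multiples of $1 | (v_2 \cdot \text{gen})$, which vanish in the relevant reduced sense, leaving only the $t$-monomial ambiguities $a$ and $b$.

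The main obstacle I anticipate is bookkeeping: carrying all seven unknown coefficients through the coassociativity check simultaneously, correctly using the $p=2$ comultiplication formulas and the mod $(2,v_1)$ reduction of $\eta_R$, and not dropping cross-terms like $t_1^2 \otimes t_1^2$ coming from $\Delta(t_1^3) = \Delta(t_1)^3$. A secondary subtlety is making sure the counit axiom $(\epsilon \otimes 1)\psi = \mathrm{id}$ is automatically satisfied (it will be, since every correction term lies in the augmentation ideal of $\BPBP$) and that the claimed structures are genuinely realizable — but realizability is not asserted by the lemma, which only says $\BP Z$ has \emph{one of} these four structures, so I only need the constraint direction. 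Finally I would remark that the four structures correspond to the four choices $(a,b) \in \Ft^2$, completing the proof; distinguishing which $(a,b)$ actually occurs for a given $Z$ is taken up separately in the next part of the paper.
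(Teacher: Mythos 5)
Your overall strategy is the same as the paper's (write the most general coaction allowed by degree reasons and by \eqref{eqn:coopmodv2}, then impose the comodule axioms and solve a linear system over $\Ft$), but as written it has two genuine gaps. First, the counit axiom is \emph{not} automatic, and you need it. Among the degree-allowed correction terms are ones whose $\BPBP$-coefficient is a pure power of $v_2$ with no $t_i$'s: $v_2|x_0$ in $\psi(x_6)$ and $\psi(y_6)$, $v_2|x_2$ in $\psi(y_8)$, $v_2|x_4$ in $\psi(y_{10})$, and $v_2|x_6$, $v_2|y_6$, $v_2^2|x_0$ in $\psi(y_{12})$. These do not lie in $\ker\epsilon$, since $\epsilon(v_2)=v_2$, so your parenthetical claim that every correction term lies in the augmentation ideal is false; the paper invokes the counit diagram \eqref{eqn:counit} exactly to kill these seven coefficients. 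Your alternative explanation --- that $\eta_R(v_2)=v_2$ makes such terms ``vanish in the relevant reduced sense'' --- does not work either: $\BP Z$ is a free $\BP/(2,v_1)$-module by \eqref{eqn:BP2Z}, so $v_2\cdot x_i\neq 0$ and nothing of this sort vanishes. (The aside about $v_2=u^{-3}$ is a statement about $(E_2)_*$, not about $\BP Z$, and plays no role here.)

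Second, coassociativity plus counitality alone do not cut the unknowns down to two; the lemma also uses the freedom to re-choose the generators. Corrections such as $v_2t_1|x_0$ in $\psi(y_8)$, $v_2t_1^2|x_0$ in $\psi(y_{10})$, and $v_2t_1^3|x_0$, $v_2t_2|x_0$ in $\psi(y_{12})$ are perfectly consistent with both axioms --- they give comodule structures isomorphic to those in \eqref{eqn:coopfour} via substitutions like $y_8\rightsquigarrow y_8+\lambda v_2x_2$ --- so your linear system will \emph{not} ``force all but two'' of the coefficients; it leaves these extra parameters free, and the paper eliminates them by an explicit change of basis before applying \eqref{eqn:coass}. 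Relatedly, your enumeration of the ambiguous monomials is incomplete: besides misstating $|t_1^4|$ (it is $8$, not $12$), you omit $t_1^5$, $t_1^6$ and $v_2^2$. This is not merely cosmetic, since the parameter $b$ in \eqref{eqn:coopfour} comes from the coefficient of $t_1^6|x_0$ in $\psi(y_{12})$, and the coefficients of $t_1^5|x_0$, $t_1^4|x_2$ in $\psi(y_{10})$ and of $t_1^4|x_4$, $t_1^5|x_2$ in $\psi(y_{12})$ are among the unknowns that coassociativity must pin down; with your truncated list you could not even write down, let alone derive, the stated formulas. Once you add the full list of unknowns, impose counitality honestly, and allow the basis change absorbing the $v_2t_1$-type terms, the coassociativity computation does indeed close up exactly as in the paper, yielding the two surviving parameters $a,b\in\Ft$.
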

 \begin{proof} 
For degree reasons, there are coefficients\[\lambda_6^0,\kappa_6^0,\mu_8^0,\lambda_8^0,\lambda_8^2,\mu_{10}^0,\lambda_{10}^0,\mu_{10}^2,\lambda_{10}^2,\lambda_{10}^4,\mu_{12}^0,\nu_{12}^0,\lambda_{12}^0,\kappa_{12}^0,\sigma,\mu_{12}^2,\lambda_{12}^2,\mu_{12}^4,\lambda_{12}^4,\lambda_{12}^6,\kappa_{12}^6\in\mathbb{F}_2\]such that one has
\begin{eqnarray*}
\psi(x_0)&=&1|x_0\\
\psi(x_2)&=&t_1|x_0+1|x_2\\
\psi(x_4)&=&t_1^2|x_0+1|x_4\\
\psi(x_6)&=&(t_1^3+\lambda_6^0v_2)|x_0+t_1^2|x_2+t_1|x_4+1|x_6\\
\psi(y_6)&=&(t_1^3+t_2+\kappa_6^0v_2)|x_0+t_1^2|x_2+1|y_6\\
\psi(y_8)&=&(\mu_8^0t_1^4+t_1t_2+\lambda_8^0v_2t_1)|x_0+(t_2+\lambda_8^2v_2)|x_2+t_1^2|x_4+t_1|y_6+1|y_8\\
\psi(y_{10})&=&(\mu_{10}^0t_1^5+t_1^2t_2+\lambda_{10}^0v_2t_1^2)|x_0+(\mu_{10}^2t_1^4+\lambda_{10}^2v_2t_1)|x_2+\\
&&+(t_1^3+t_2+\lambda_{10}^4v_2)|x_4+t_1^2|x_6+t_1^2|y_6+1|y_{10}\\
\psi(y_{12})&=&(\mu_{12}^0t_1^6+t_1^3t_2+\nu_{12}^0t_2^2+\lambda_{12}^0v_2t_1^3+\kappa_{12}^0v_2t_2+\sigma v_2^2)|x_0+\\
&&+(\mu_{12}^2t_1^5+t_1^2t_2+\lambda_{12}^2v_2t_1^2)|x_2+(\mu_{12}^4t_1^4+t_1t_2+\lambda_{12}^4v_2t_1)|x_4+\\
&&+(t_2+\lambda_{12}^6v_2)|x_6+(t_1^3+\kappa_{12}^6v_2)|y_6+t_1^2|y_8+t_1|y_{10}+1|y_{12}.
\end{eqnarray*}
The  counitality condition of $\psi$
\begin{equation} \label{eqn:counit}
\xymatrix{
\BP Z  \ar[drr]^{\cong} \ar[d]_{\psi}\\
\BPBP \otimes_{\BP} \BP Z \ar[rr]_-{\epsilon \otimes \BP Z} && \BP \otimes_{\BP} \BP Z 
}
\end{equation}
forces $\lambda_6^0=\kappa_6^0=\lambda_8^2=\lambda_{10}^4=\sigma=\lambda_{12}^6=\kappa_{12}^6=0.$ After a change of basis change 
\begin{eqnarray*}
y_8&\rightsquigarrow&y_8+\lambda_8^0v_2x_2\\
y_{10}&\rightsquigarrow&y_{10}+\lambda_{10}^0v_2x_4\\
y_{12}&\rightsquigarrow&y_{12}+\kappa_{12}^0v_2y_6+(\lambda_{12}^0+\kappa_{12}^0)v_2x_6.
\end{eqnarray*}
we have
\begin{eqnarray*}
\psi(x_0)&=&1|x_0\\
\psi(x_2)&=&t_1|x_0+1|x_2\\
\psi(x_4)&=&t_1^2|x_0+1|x_4\\
\psi(x_6)&=&t_1^3|x_0+t_1^2|x_2+t_1|x_4+1|x_6\\
\psi(y_6)&=&(t_1^3+t_2)|x_0+t_1^2|x_2+1|y_6\\
\psi(y_8)&=&(\mu_8^0t_1^4+t_1t_2)|x_0+t_2|x_2+t_1^2|x_4+t_1|y_6+1|y_8\\
\psi(y_{10})&=&(\mu_{10}^0t_1^5+t_1^2t_2)|x_0+(\mu_{10}^2t_1^4+\lambda_{10}^2v_2t_1)|x_2+(t_1^3+t_2)|x_4+t_1^2|x_6+t_1^2|y_6+1|y_{10},\\
\psi(y_{12})&=&(\mu_{12}^0t_1^6+t_1^3t_2+\nu_{12}^0t_2^2)|x_0+(\mu_{12}^2t_1^5+t_1^2t_2+(\lambda_8^0+\lambda_{12}^0+\lambda_{12}^2)v_2t_1^2)|x_2+\\
&&(\mu_{12}^4t_1^4+t_1t_2+(\lambda_{10}^0+\lambda_{12}^4+\lambda_{12}^0+\kappa_{12}^0)v_2t_1)|x_4+t_2|x_6+t_1^3|y_6+t_1^2|y_8+t_1|y_{10}+1|y_{12},\\
\end{eqnarray*}
Now we exploit the coassociativity condition
\begin{equation} \label{eqn:coass}
\xymatrix{
\BP Z  \ar[rr]^{\psi} \ar[d]_{\psi} && \BPBP \otimes_{\BP} \BP Z \ar[d]^{\Delta \otimes \BP Z}\\
\BPBP \otimes_{\BP} \BP Z \ar[rr]_-{ \BPBP \otimes \psi} && \BPBP \otimes_{\BP} \BPBP \otimes_{\BP} \BP Z.
}
\end{equation}
Applying the coassociativity condition on on $y_8$ tells us nothing, while applying it on $y_{10}$ tells us that
 \[\lambda_{10}^2 = 0, \mu_{10}^0 = \mu_{10}^2 = 1.\]
Applying it on $y_{12} $, we get 
\[ \lambda_8^0+\lambda_{12}^0+\lambda_{12}^2 = 0, \mu_{12}^2 = 0, \lambda_{10}^0+\lambda_{12}^4+\lambda_{12}^0+\kappa_{12}^0 = 0, \mu_{12}^4 + \mu_{12}^0+1=0 \text{ and } \mu_8^0 + \mu_{12}^0 + \nu_{12}^0+1=0 \]
Setting $a = \mu_{8}^0$ and $b = \mu_{12}^0+1$, we get \eqref{eqn:coopfour}. 
\end{proof} 
\begin{rem}By sending $v_i \mapsto 0$ and $t_i \mapsto \zeta_i^2$, we obtain a functor 
\[ Q : ( \BP, \BPBP)\text{-comodules}\longrightarrow ( \Ft, \Phi A_*)\text{-comodules},\] 
where $\Phi A_*$ is the double of the dual Steenrod algebra. This functor sends $\BP Z$ to $\Phi A(1)_*$. Since $A(1)_*$ has four different $A_*$-comodule structures, it follows that $\Phi A(1)_*$ has four different $\Phi A_*$-comodule structures. The four different $\BPBP$-comodule structures on $\BP Z$ are essentially lifts of the four different $\Phi A_*$-comodule structures on $ \Phi A(1)_*$.
\end{rem}
\begin{rem} \label{rem:M}Let $M_* = \BP/(2,v_1)\langle g_0, g_2, g_4, g_6 \rangle$ be the $\BPBP$-comodule with four generators with cooperations 
\begin{eqnarray*}
\psi(g_0)&=&1|g_0\\
\psi(g_2)&=&t_1|g_0+1|g_2\\
\psi(g_4)&=&t_1^2|g_0+1|g_4\\
\psi(g_6)&=&t_1^3|g_0+t_1^2|g_2+t_1|g_4+1|g_6.
\end{eqnarray*}
Then if $W = A_1 \sma C\nu$, where $A_1$ is any of the four $8$-cell complexes whose cohomology is isomorphic to $A(1)$, the $\BPBP$-comodule $\BP W$ is isomorphic to $M_*$.
\end{rem}
A straightforward calculation tells us: 
\begin{lem} \label{BPexactZ} There is an exact sequence of $\BPBP$-comodules 
\[ 0 \to M_* \overset{\iota}\to \BP Z \overset{\tau}\to \Sigma^6 M_* \to 0\]
such that $\iota(g_i) = x_i$, $\tau(x_i) = 0$ and $\tau(y_i) = \Sigma^6g_{i-6}$.
\end{lem}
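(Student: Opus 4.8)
The plan is to write down $\iota$ and $\tau$ explicitly on $\BP$-module generators, verify from \eqref{eqn:coopfour} that each respects the coaction, and then read off exactness from the fact that all three comodules are free over $\BP/(2,v_1)$ on the displayed bases.

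For $\iota$: let it be the $\BP$-linear map $M_*\to\BP Z$ with $\iota(g_i)=x_i$ for $i\in\{0,2,4,6\}$. Comparing the formulas for $\psi(g_i)$ in Remark~\ref{rem:M} with those for $\psi(x_i)$ in \eqref{eqn:coopfour} shows they coincide after the substitution $g_i\mapsto x_i$, so $(\mathrm{id}_{\BPBP}\otimes\iota)\circ\psi=\psi\circ\iota$ and $\iota$ is a comodule map; it is injective since it embeds the free $\BP/(2,v_1)$-basis of $M_*$ into the free basis $\{x_0,x_2,x_4,x_6,y_6,y_8,y_{10},y_{12}\}$ of $\BP Z$.

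For $\tau$: let it be the $\BP$-linear map $\BP Z\to\Sigma^6 M_*$ with $\tau(x_i)=0$ and $\tau(y_i)=\Sigma^6 g_{i-6}$; the degrees match. The key observation, which holds uniformly for all four comodule structures of Lemma~\ref{lem:independant} since the parameters $a,b$ only affect the coefficients of $x_0,x_2,x_4$, is that the coaction \eqref{eqn:coopfour} is block lower triangular with respect to the two-step filtration of $\BP Z$ whose first stage is spanned by $x_0,x_2,x_4,x_6$: the coaction on each $x_i$ is supported on the $x_j$, while the part of $\psi(y_i)$ supported on the $y_j$ is exactly $\psi(g_{i-6})$ with each $g_j$ replaced by $y_j$. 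Hence $(\mathrm{id}_{\BPBP}\otimes\tau)\psi(x_i)=0=\psi(\tau(x_i))$, and $(\mathrm{id}_{\BPBP}\otimes\tau)\psi(y_i)$ kills the $x_j$ terms and equals $\psi(\Sigma^6 g_{i-6})$; so $\tau$ is a comodule map, and it is evidently surjective.

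For exactness: $\tau$ sends the basis element $y_i$ to the basis element $\Sigma^6 g_{i-6}$ and annihilates $x_0,x_2,x_4,x_6$, so $\ker\tau$ is precisely the $\BP/(2,v_1)$-span of $x_0,x_2,x_4,x_6$, which is the image of $\iota$; together with injectivity of $\iota$ and surjectivity of $\tau$ this gives the short exact sequence. I do not expect a real obstacle; the only spot demanding care is the last line of \eqref{eqn:coopfour}, where one must check that $\tau$ applied to $t_1^3|y_6+t_1^2|y_8+t_1|y_{10}+1|y_{12}$ recovers $\psi(\Sigma^6 g_6)=t_1^3|\Sigma^6 g_0+t_1^2|\Sigma^6 g_2+t_1|\Sigma^6 g_4+1|\Sigma^6 g_6$, together with the analogous checks for $\psi(y_8)$ and $\psi(y_{10})$, confirming that the $y$-block of the coaction agrees with that of $M_*$ on the nose.
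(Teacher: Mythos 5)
Your proof is correct and is exactly the ``straightforward calculation'' the paper alludes to: one checks against \eqref{eqn:coopfour} and Remark~\ref{rem:M} that $\iota$ matches coactions on the $x$-block, that the $y$-block of $\psi$ on $\BP Z$ reproduces the coaction of $\Sigma^6 M_*$ (the parameters $a,b$ only entering terms killed by $\tau$), and reads off exactness from the free $\BP/(2,v_1)$-bases. Nothing further is needed.
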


\section{The action of the small Morava stabilizer group on $\E Z$}\label{Sec:Morava}
To compute the $E_2$-page of the homotopy fixed point spectral sequence
\begin{equation} \label{eqn:descent}
 E_2^{s,t} = H^s(\St;\Et Z)^{Gal(\Ff/\Ft)}\To\pi_{t-s}L_{K(2)}Z,
 \end{equation}
we first need to understand the action of $\St=Aut(\Gamma_2/\Ff)$ on $\E Z$, where $\Gamma_2$ is the height $2$ Honda formal group law over $\mathbb{F}_4$.  Recall from Section~\ref{Sec:FGL} \eqref{map:theta}
the left $\BP$-linear map 
\[ \theta_{\overline{\Gamma}_n}: \BPBP \to Map^c(\St,(E_2)_*).\]
 For $X$ a finite spectrum, we deduce the action of $\St$ on $\E X$ from the knowledge of the $\BPBP$-coaction map $\psi_{X}^{BP}$ on $\BP X$ 
 via the diagram
\begin{equation}\label{Zaction}
\xymatrix{
\BP X\ar[r]^-{\psi_X^{BP}}& \BPBP\otimes_{\BP}\BP X\ar[d]^{\theta_{\overline{\Gamma}_n}\otimes\BP X}\\
&Map(\St,(E_2)_*)\otimes_{\BP}\BP X.}
\end{equation}
The main purpose of this section is to understand the action of $\St$ on $\E Z$, for all $Z \in \widetilde{\mathcal{Z}}$. 

We begin by briefly recalling some key facts about $\St$ that we need for the calculations to follow. Let $T$ be a formal variable that need not commute with $W(\Ff)$ and let 
\[ \mathcal{O}_2 := W(\Ff)\langle T\rangle/(T^2 - 2, T\omega - \omega^\sigma T),\]
where $\omega$ is a root of $x^2 + x +1 \in \Ft[x]$, and $\sigma$ is the Frobenius map on $W(\Ff)$. Note that any element $\gamma \in \mathcal{O}_2$ can be written as a power series\[\gamma = \sum_{n=0}^{\infty}a_nT^n\] 
where the $a_n$ are Teichm\"uller lifts of $\Ff^{\times}$ or are zero. Then $\gamma$ corresponds to the power series  
\[ \overline{a}_0x +_{\Gamma_2} \overline{a}_1x^2 +_{\Gamma_2} \dots +_{\Gamma_2} \overline{a}_nx^{2^n} +_{\Gamma_2} \dots \in \Ff[[x]]\]
where $ \overline{a}_n$ is the image of $a_n$ under the quotient map $W(\Ff) \to \Ff$. In fact, this defines an isomorphism from $\mathcal{O}_2$  to $\mathit{End}(\Gamma_2/\Ff) \subset \Ff[[x]]$ and consequently, $\St$ is isomorphic to the group of units of $\mathcal{O}_2$ (see \cite{Rav}[Lemma A2.2.20]).  Recall from Section~\ref{Sec:FGL} the map
\[ \widetilde{\theta_{\Gamma_n}}: \widetilde{VT} \to Map^c( \St, (E_2)_0),\]
where $\Gamma_n$ is the height $n$ Honda formal group law. To avoid cumbersome notation, let us continue (from Section~\ref{Sec:FGL}) to  denote $\widetilde{\theta_{\Gamma_n}}(\widetilde{t_n})$ simply by $\widetilde{t_n}$. From \eqref{timodmax}, we learn that if \[ \gamma^{-1} = {\sum\limits_{n \geq 0}}^{\Gamma_n}\overline{a}_nx^{2^n} \in \St,\] then  
\begin{equation} \label{eqn:tn}
 \widetilde{t_n}(\gamma) \equiv a_n  \hspace{-8pt}\mod (2,u_1).
\end{equation} 
Also keep in mind that the Teichm\"uller lifts $a_n$ satisfy the equation $a_n^4 = a_n$. Therefore we have 
\begin{equation} \label{powercong}
\widetilde{t_n}(-)^4 \equiv \widetilde{t_n}(-) \hspace{-8pt}\mod (2,u_1) 
\end{equation}

We can also write every element of $\mathcal{O}_2$ as $a + b T$ where $a,b \in  W(\Ff)$. Using the isomorphism $\St \iso \mathcal{O}_2^{\times}$, one defines a determinant map (see \cite[\S 2.3]{Bea1})
\[\det: \St \to  \Z_2^{\times} \]
which sends $a + bT \mapsto aa^{\sigma} - 2bb^{\sigma}$.
Composing this with the quotient map $\Z_2^{\times}\to\Z_2^{\times}/\{\pm1\}\iso\Z_2$ gives the \emph{norm map}
\begin{equation} 
\label{norm} N:\St \overset{\mathit{det}}\longrightarrow \Z_2^{\times} \twoheadrightarrow \Z_2.
\end{equation}
The kernel of the norm map $N: \St \to \Z_2$ is called the \emph{norm one} subgroup and is denoted by $\Sto$.  In \cite{Bea1}[\S 2.3], Beaudry described two elements $\alpha,\pi \in \St$ such that $\det(\alpha) = -1$ and $\det(\pi) = 3$, two elements which generate $\mathbb{Z}_2^{\times}$ topologically. In particular, $\pi$ defines an isomorphism $\St \iso \Sto \rtimes \mathbb{Z}_2$. 


As we will see in this section, the most crucial part of the action of $\St$ on $\E Z$ is the action of its finite subgroups, which we describe here, following \cite{Bea1} and \cite{Buj}.

\begin{prop}
Every maximal finite nonabelian subgroup of $\St$ is conjugate to a group\[G_{24}=Q_8\rtimes C_3,\]where $Q_8$ is the quaternion group\[Q_8=\langle\ii,\jj:\ii^4=\ind,\ii^2=\jj^2,\ii^3\jj=\jj\ii\rangle\]and $C_3$ acts by permuting $\ii,\jj,$ and $\kk=\ii\jj$.
\end{prop}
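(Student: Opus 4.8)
The plan is to identify the finite subgroups of $\St$ via the structure theory of maximal orders in central division algebras over $\Q_2$. Since $\St \iso \mathcal{O}_2^\times$ and $\mathcal{O}_2 = W(\Ff)\langle T\rangle/(T^2-2,\,T\omega-\omega^\sigma T)$ is the maximal order in the quaternion division algebra $D$ of Hasse invariant $1/2$ over $\Q_2$, every finite subgroup of $\St$ is a finite subgroup of $D^\times$. First I would recall the classification of finite subgroups of division algebras over local fields (going back to Amitsur's classification of finite groups embeddable in division rings, and worked out in this specific setting by Hewett and used by Bujard in \cite{Buj}): the finite subgroups of $D^\times$ are cyclic, binary dihedral, or of the form $Q_8 \rtimes C_3$, and the only nonabelian ones that are \emph{not} contained in a larger finite subgroup are conjugate to $G_{24} = Q_8 \rtimes C_3$ (for $p=2$, $n=2$). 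So the real content is: (i) a finite nonabelian subgroup $G \le \St$ contains a copy of $Q_8$; (ii) the normalizer considerations force the $C_3$-extension; and (iii) maximality pins down $G_{24}$ up to conjugacy.

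The key steps, in order, are as follows. I would begin by locating one explicit copy of $Q_8$ inside $\mathcal{O}_2^\times$: the elements $\ii, \jj$ can be taken to be $\ii = \omega^2(1 + \omega T)$-type expressions — more precisely, one checks directly from the defining relations $T^2 = 2$, $T\omega = \omega^\sigma T$ that there exist units $s = \tfrac{1}{2}(-1 + \text{(something involving }\omega, T))$ satisfying $s^2 = -1$ and a second unit $t$ anticommuting with it up to the $Q_8$ relations. This is the one genuine computation, and it is short: it amounts to solving $x^2 = -1$ in $\mathcal{O}_2$ and exhibiting an anticommuting pair, which is standard for the quaternion order. Next, given an arbitrary finite nonabelian $G \le \St$, I would argue that $\Q_2[G] \hookrightarrow D$ as $\Q_2$-algebras; since $D$ is a division algebra and $\dim_{\Q_2} D = 4$, the simple components of $\Q_2[G]$ are division algebras of $\Q_2$-dimension dividing $4$, and a nonabelian $G$ forces a noncommutative component, which must be $D$ itself (the only noncommutative division algebra of dimension $\le 4$ over $\Q_2$). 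This representation-theoretic constraint is tight enough to force $|G| \mid 24$ and, together with nonabelianness, to force $G \in \{Q_8, Q_8\rtimes C_3, \dots\}$ — one eliminates dihedral and cyclic options by the same dimension count on $\Q_2[G]$ combined with the fact that $\mu(D) $, the roots of unity in $D$, has order exactly $24$ (the $3$ coming from $\omega$ and the $8$ from the $Q_8$). Finally, for maximality: any finite subgroup strictly containing $Q_8$ and not $C_3$-related would produce a finite subgroup of $D^\times$ of order $> 24$ or one whose group algebra cannot embed in $D$, a contradiction; the $C_3$ acting by cycling $\ii,\jj,\kk$ is realized by conjugation by $\omega \in W(\Ff)^\times \subset \mathcal{O}_2^\times$, which indeed permutes the three square roots of $-1$ in the standard way.

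I expect the main obstacle to be step (ii)–(iii): cleanly deducing, from the embedding $\Q_2[G] \hookrightarrow D$, that the \emph{only} maximal nonabelian finite subgroup is $G_{24}$, rather than merely bounding $|G|$. The subtlety is that a priori one might worry about a binary dihedral group $Q_{16}$ or $Q_{8}\times C_3$ sitting inside; ruling these out requires knowing that $D$ contains no primitive $16$th root of unity (equivalently $\Q_2(\zeta_{16})$ does not embed as a maximal subfield, which follows because $[\Q_2(\zeta_{16}):\Q_2] = 4$ but this extension is not the unramified-by-ramified type that embeds in $D$ — one checks ramification indices) and that the roots of unity group $\mu(D)$ is precisely $\mu_{24}$. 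I would cite \cite{Buj} and \cite{Bea1} for the detailed classification of \emph{all} conjugacy classes of finite subgroups and simply extract the statement about maximal nonabelian ones, since the excerpt already signals (``following \cite{Bea1} and \cite{Buj}'') that this is the intended route; the proof in the paper is presumably a citation plus the explicit exhibition of one $G_{24}$, which is what I would write: exhibit $\ii, \jj, \omega$ explicitly in $\mathcal{O}_2^\times$, verify the relations, and invoke the cited classification for the ``every maximal finite nonabelian subgroup is conjugate to this one'' clause.
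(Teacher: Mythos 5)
The paper gives no proof of this proposition at all: it is recorded as a known fact, ``following \cite{Bea1} and \cite{Buj},'' which is exactly the route you anticipate (exhibit the group and quote the classification of finite subgroups of the unit group of the quaternion division algebra over $\mathbb{Q}_2$), so your approach matches the paper's. One minor slip in your supplementary sketch, which does not affect the conclusion: ruling out $Q_{16}$ requires that $\St$ contain no element of order $8$, i.e., that $\mathbb{Q}_2(\mu_8)$ (of degree $4$ over $\mathbb{Q}_2$) not embed as a subfield, whereas $\mathbb{Q}_2(\mu_{16})$ has degree $8$ over $\mathbb{Q}_2$, not $4$.
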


\begin{notn}
We denote the identity element of $Q_8$ by $\ind$. The order $2$ element of $Q_8$ is often denoted by $-\ind$, however, to avoid confusing it with an element of a ring, we will denote it by $\nind\in Q_8$. Similarly, we denote $\hi=\nind\ii,\hj=\nind\jj,\hk=\nind\kk$. The center of $Q_8$, which is an order 2 group generated by $\nind$, will be denoted $C_{\nind}$.
\end{notn}
The maximal finite subgroup $G_{24}$ is unique up to conjugation as a subgroup of $\St$, while as a subgroup of $\Sto$, there are two conjugacy classes, $G_{24}$ and $G_{24}'=\pi G_{24}\pi^{-1}$. The group $\Sto$ also has a cyclic subgroup\[C_6=C_{\nind}\times C_3\]generated by $\nind=\ii^2$ and $\omega$.

The identification of $\St$ with $\mathcal{O}_2^{\times}$ endows $\St$ with a decreasing filtration
\begin{equation}\label{St:res}F_{0/2}\St=\St, F_{n/2}\St = \lbrace \gamma \in \St: \gamma \equiv {1}\pmod{T^n}) \rbrace.\end{equation}
One should note that  
\[ 
F_{n/2} \St / F_{(n+1)/2} \St \iso\left\lbrace \begin{array}{ccc}
C_3 & n=0 \\
\mathbb{F}_4 & n\geq 0. 
 \end{array} \right. \]
Notice from \eqref{eqn:tn}  that $t_n$  co-acts trivially on $\BP Z$ for $n>2$. Therefore, following \eqref{eqn:tn} we conclude that $F_{n/2}\St$ for $n>2$ acts trivially on $\E Z$. We list the generators of the various filtration quotients of $\St$ in the following table:
\[\begin{tabular}{|c|c|} \hline
\mbox{associated graded}&\mbox{generators} \\ \hline 
\rule{0pt}{3ex} $F_{0/2}\St/F_{1/2}\St\iso C_3$&$\omega$\\ 
\rule{0pt}{3ex} $F_{1/2}\St/F_{2/2}\St\iso Q_8/C_{\nind}\iso C_2\times C_2$&$\ii,\jj$\\ 
\rule{0pt}{3ex} $F_{2/2}\St/F_{3/2}\St\iso C_{\nind}\times C_{\alpha}$&$\nind,\alpha$\\ 
\hline
\end{tabular}\]
The subgroup 
 \[K=\overline{\langle\alpha,F_{3/2}\St\rangle},\]
 is known as the Poincar\'e duality subgroup of $\St$, and it is known that 
\[\St\iso K\rtimes G_{24}\]
The subgroup $\Sto$ inherits the filtration of $\St$ via $F_{n/2} \Sto := \Sto \cap F_{n/2}\St$. In particular, we have 

\[ F_{n/2} \Sto / F_{(n+1)/2} \Sto \iso\left\lbrace \begin{array}{ccc}
C_3 & n=0 \\
\mathbb{F}_4 & \text{$n\geq 0$ and $n$ odd, or $n=2$ }\\
\mathbb{F}_2 & \text{otherwise.}
 \end{array} \right.
\]
There is a corresponding  Poincar\'e duality subgroup $K^1$ which satisfies
\[\Sto\iso K^1\rtimes G_{24}.\]

Recall from \eqref{eqn:BP2Z} that 
\[ \E Z \iso \E \otimes_{\BP}\BP Z \iso \E /(2,u_1)\langle x_0, x_2, x_4, x_6, y_6, y_8,y_{10}, y_{12} \rangle. \]
For our purposes it is convenient to have all the generators in degree $0$, so we define 
\[ \x_i = u^{\frac{i}{2}}x_i , \y_i = u^{\frac{i}{2}}y_i\]
in order to have
\[\E Z  \iso  \E /(2,u_1)\langle \x_0, \x_2, \x_4, \x_6, \y_6, \y_8, \y_{10}, \y_{12}\rangle.\]

By~\eqref{Zaction}, the action of $\St$ can be expressed in terms of maps $\widetilde{t_i}:\St\to E(\Ff,\Gamma_2)$. For instance, we have\[\psi_Z^{BP}(x_2)=1|x_2+t_1|x_0=1|x_2+u^{-1}\widetilde{t_0}^{-2}\widetilde{t_1}|x_0,\]so for $\gamma\in\St$, we have
\[\gamma(\x_2)= \gamma (u x_2)=\widetilde{t_0}(\gamma)u(x_2+u^{-1}\widetilde{t_0}(\gamma)^{-2}\widetilde{t_1}(\gamma)x_0)=\widetilde{t_0}(\gamma)\x_2+\widetilde{t_0}(\gamma)^{-1}\widetilde{t_1}(\gamma)\x_0.\]
By Lemma \ref{lem:independant}, if we suppress `$\gamma$', the action of $\St$ can be described as in Table~\ref{eqn:moravaaction} (thanks to \eqref{powercong}, we can adopt the simplifications $\widetilde{t_i}^4=\widetilde{t_i}$ for $i=1,2$ and $\widetilde{t_0}^3=1$).%
\begin{table}[!h] 
$\begin{array}{|c|c|}
\hline \rule{0pt}{3ex} 
u&\widetilde{t_0}  u\\
\hline \rule{0pt}{3ex} 
\x_0&\x_0\\
\hline \rule{0pt}{3ex} 
\x_2&\widetilde{t_0}^{2}\widetilde{t_1}  \x_0 + \widetilde{t_0}  \x_2\\ 
\hline \rule{0pt}{3ex} 
\x_4&\widetilde{t_0}^{1}\widetilde{t_1}^2\x_0 + \widetilde{t_0}^2\x_4\\
\hline \rule{0pt}{3ex}
\x_6&\widetilde{t_1}^3\x_0 + \widetilde{t_0}^{2}\widetilde{t_1}^2\x_2 + \widetilde{t_0}\widetilde{t_1}\x_4 +\x_6\\
\hline \rule{0pt}{3ex}
\y_6&(\widetilde{t_1}^3+\widetilde{t_0}^{2}\widetilde{t_2})\x_0 + \widetilde{t_0}^{2}\widetilde{t_1}^2\x_2 + \y_6\\
\hline \rule{0pt}{3ex}
\y_8&(a\widetilde{t_0}^{2}\widetilde{t_1} + \widetilde{t_0}^{}\widetilde{t_1}\widetilde{t_2})\x_0 + \widetilde{t_2}\x_2 + \widetilde{t_1}^2\x_4 + \widetilde{t_0}^2\widetilde{t_1}\y_6 + \widetilde{t_0}\y_8\\
\hline \rule{0pt}{3ex}
\y_{10}&(\widetilde{t_0}\widetilde{t_1}^2+\widetilde{t_1}^2\widetilde{t_2})\x_0 + \widetilde{t_1}\x_2 + (\widetilde{t_0}^{2}\widetilde{t_1}^3+\widetilde{t_0}\widetilde{t_2})\x_4 + \\\rule{0pt}{2ex}
&+ \widetilde{t_0}\widetilde{t_1}^2\x_6 + \widetilde{t_0}\widetilde{t_1}\y_6 + \widetilde{t_0}^2\y_{10}\\ 
\hline \rule{0pt}{3ex}
\y_{12}&((b+1)\widetilde{t_1}^3+\widetilde{t_0}^{2}\widetilde{t_1}^3\widetilde{t_2}+(a+b)\widetilde{t_0}\widetilde{t_2}^2)\x_0 + \widetilde{t_0}\widetilde{t_1}^2\widetilde{t_2}\x_2 + \\ \rule{0pt}{2ex}
&+ (b\widetilde{t_0}^{}\widetilde{t_1}+\widetilde{t_1}\widetilde{t_2})\x_4 + \widetilde{t_0}^2\widetilde{t_2}\x_6 + \widetilde{t_1}^3\y_6 + \widetilde{t_0}^2\widetilde{t_1}^2\y_8 + \widetilde{t_0}\widetilde{t_1}\y_{10} + \y_{12}\\ 
\hline
\end{array}$
\caption{The action of $\St$ on $\E Z$}
\label{eqn:moravaaction}
\end{table}

 Let $\overline{M}_* = \E \otimes_{\BP} M_*$, where $M_*$ is the $\BPBP$-comodule introduced in Remark~\ref{rem:M}. Define 
\[ \overline{g}_i  = u^{\frac{i}{2}} g_i\]
to form a set of generators $\{\overline{g}_0,\overline{g}_2,\overline{g}_4,\overline{g}_6\}$ of 
$\overline{M}_0$. 
A consequence of Lemma~\ref{BPexactZ} and Table~\ref{eqn:moravaaction} is: 
\begin{lem} \label{exactM}There is an exact sequence 
\begin{equation} \label{exactmoravamod}
 0 \to  \overline{M}_0 \overset{\overline{\iota}}\to \Ez Z \overset{\overline{\tau}}\to \overline{M}_0 \to 0
\end{equation}
of $Q_8$-modules, where $\overline{\iota}(\overline{g}_i) = \x_i$ and $\overline{\tau}(\y_i) = \overline{g}_{i-6}$. 
\end{lem}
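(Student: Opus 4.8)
The plan is to obtain the exact sequence of $Q_8$-modules directly from the exact sequence of $\BPBP$-comodules in Lemma~\ref{BPexactZ} by applying the functor $\Ez(-) = \E \otimes_{\BP}(-)$ restricted to the $Q_8$-action, and then to check that under this functor the abstract maps $\iota$ and $\tau$ become precisely the maps $\overline{\iota}$ and $\overline{\tau}$ described in the statement. First I would observe that $\Ez(-)$ is an exact functor on the relevant subcategory: the comodules $M_*$, $\BP Z$ and $\Sigma^6 M_*$ are all finitely generated free modules over $\BP/(2,v_1)$ on generators in even degrees, so tensoring with $\E$ over $\BP$ (which factors through $\E/(2,u_1)$) preserves the short exact sequence $0 \to M_* \to \BP Z \to \Sigma^6 M_* \to 0$. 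Moreover, by~\eqref{Zaction} the $\St$-action (and in particular the $Q_8$-action) on $\Ez X$ is functorial in the $\BPBP$-comodule $\BP X$, so the maps $\overline{\iota} := \E \otimes \iota$ and $\overline{\tau} := \E \otimes \tau$ are automatically $\St$-equivariant, hence $Q_8$-equivariant. This already yields a short exact sequence of $Q_8$-modules
\[ 0 \to \E \otimes_{\BP} M_* \overset{\E\otimes\iota}\to \E \otimes_{\BP}\BP Z \overset{\E\otimes\tau}\to \E\otimes_{\BP}\Sigma^6 M_* \to 0. \]

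Next I would identify the three terms with the objects named in the statement. The left term is $\overline{M}_0$ by definition. The middle term is $\Ez Z$ by~\eqref{eqn:BP2Z} together with the change of generators $\x_i = u^{i/2}x_i$, $\y_i = u^{i/2}y_i$. For the right term, I would note that $\E\otimes_{\BP}\Sigma^6 M_*$ is, after multiplying the degree-$6$ shifted generators $\Sigma^6 g_i$ by $u^{(i+6)/2} = u^3 u^{i/2}$, just another copy of $\overline{M}_0$ as a $Q_8$-module: multiplication by the invertible element $u^3 \in \E$ is $\St$-equivariant up to the scalar $\widetilde{t_0}(\gamma)^{\mp 3}$, but since $\widetilde{t_0}(\gamma)^3 = 1$ (as recorded just before Table~\ref{eqn:moravaaction}, where we use $\widetilde{t_0}^3 = 1$), this scalar is trivial and $u^3\cdot(-)$ is genuinely $Q_8$-equivariant from $\E\otimes_{\BP}\Sigma^6 M_0$ to $\overline{M}_0$. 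Under these identifications, $\iota(g_i) = x_i$ gives $\overline{\iota}(\overline{g}_i) = \x_i$, and $\tau(y_i) = \Sigma^6 g_{i-6}$ together with the normalization gives $\overline{\tau}(\y_i) = \overline{g}_{i-6}$, as claimed; the vanishing $\overline{\tau}(\x_i) = 0$ is immediate from $\tau(x_i) = 0$.

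The only real content beyond bookkeeping is the verification that $u^3\cdot(-)$ is $Q_8$-equivariant, i.e.\ that the $\widetilde{t_0}$-twist disappears on the finite subgroup $Q_8$; I expect this to be the one genuine obstacle, and it is handled by the fact, noted in Section~\ref{Sec:Morava}, that $\widetilde{t_0}$ takes values in the cube roots of unity so that $\widetilde{t_0}(\gamma)^3 = 1$ for every $\gamma$. (Alternatively, one can simply read off the $Q_8$-action on $\{\x_i\}$ and $\{\y_i\}$ from Table~\ref{eqn:moravaaction}, define $\overline{\iota}$ and $\overline{\tau}$ by the stated formulas, and check equivariance cell by cell against the corresponding action on $\overline{M}_0$ coming from Remark~\ref{rem:M}; this is routine but longer, whereas the functorial argument above makes the equivariance automatic and isolates the one subtle point.) With these identifications in place the short exact sequence of $Q_8$-modules~\eqref{exactmoravamod} follows, completing the proof.
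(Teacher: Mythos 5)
Your argument is correct and is essentially the paper's own (the paper states the lemma as an immediate consequence of Lemma~\ref{BPexactZ} and Table~\ref{eqn:moravaaction} without further proof): base-change the comodule exact sequence along $\BP \to \E$, use functoriality of the $\St$-action in the comodule, and absorb the $\Sigma^6$ shift via multiplication by $u^3$. Your identification of the one subtle point — that the $u^3$-twist is harmless because $\widetilde{t_0}^3 \equiv 1$ (indeed $\widetilde{t_0}(\gamma)\equiv 1$ for $\gamma\in Q_8$) modulo $(2,u_1)$ — is exactly the detail the paper leaves implicit, so the proposal is a correct, slightly more explicit rendering of the intended proof.
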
 
We will use the exact sequence of \eqref{exactmoravamod} (compare with \eqref{exact}) to understand the action of $Q_8$ on $\Ez Z$. For this purpose we need the data of $\widetilde{t_i}(\gamma)$ modulo $(2,u_1)$ for $\gamma \in Q_8$. By definition of the Honda formal group law $\Gamma_2$, we have $[2]_{\Gamma_2}(x)=x^4$ and it follows that
\[[-1]_{\Gamma_2}(x)=\sum\nolimits_{n\geq0}^{\Gamma_2}x^{4^n}.\]
Indeed, one has
\[x+_{\Gamma_2}\sum\nolimits_{n\geq0}^{\Gamma_2}x^{4^n}=x+_{\Gamma_2}x+_{\Gamma_2}\sum\nolimits_{n\geq1}^{\Gamma_2}x^{4^n}=x^4+_{\Gamma_2}\sum\nolimits_{n\geq1}^{\Gamma_2}x^{4^n}=\cdots=0.\]
Following \eqref{timodmax} and using the fact that $[-1]_{\Gamma_2}(x)$ is its own inverse, we have 
\[\widetilde{t_n}(\nind) = 
\left\{\begin{array}{rl}1&n\mbox{ even}\\0&n\mbox{ odd}\end{array}\right.
\]
modulo $(2,u_1)$. Further, $\ii$ and $\jj$ can be chosen so that modulo $(2,u_1)$, one has
\begin{itemize}
\item $\widetilde{t_0}(\gamma) = 1$ for all $\gamma \in Q_8$
\item $\widetilde{t_1}(\ii) = \widetilde{t_1}(\hi) =1, \widetilde{t_1}(\jj) = \widetilde{t_1}(\hj) = \omega, \text{ and } \widetilde{t_1}(\kk) = \widetilde{t_1}(\hk) =\omega^2$. 
\end{itemize}
Let $\Ff[Q_8/C_{\nind}]$ and $\Ff[Q_8]$ denote the group rings over $\Ff$.
\begin{lem} \label{repM} There is an isomorphism $\overline{M}_0 \iso \Ff[Q_8/C_{\nind}]$ of $\Ff[Q_8]$-modules.
\end{lem}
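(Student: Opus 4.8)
The plan is to exhibit an explicit $\Ff$-basis of $\overline{M}_0$ on which $Q_8$ permutes the elements, and to recognize the resulting permutation action as the action on cosets of $C_{\nind}$. First I would record that, as an $\Ff$-module, $\overline{M}_0 = \Ff/(2,u_1)\langle\overline{g}_0,\overline{g}_2,\overline{g}_4,\overline{g}_6\rangle$ is free of rank $4$ over $\Ff=\mathbb{F}_4$, so $|Q_8/C_{\nind}| = 4$ matches the rank and the claim is at least dimensionally plausible. Then I would specialize Table~\ref{eqn:moravaaction} — restricted along $\overline{\iota}$ to the submodule generated by $\overline{g}_0,\overline{g}_2,\overline{g}_4,\overline{g}_6$, i.e. to the formulas for $u$, $\x_0$, $\x_2$, $\x_4$, $\x_6$ — to elements $\gamma \in Q_8$, using the reductions $\widetilde{t_0}(\gamma)\equiv 1$, $\widetilde{t_1}(\nind)\equiv 0$, $\widetilde{t_1}(\ii)=\widetilde{t_1}(\hi)\equiv 1$, $\widetilde{t_1}(\jj)=\widetilde{t_1}(\hj)\equiv\omega$, $\widetilde{t_1}(\kk)=\widetilde{t_1}(\hk)\equiv\omega^2$ all mod $(2,u_1)$, together with $\widetilde{t_1}(\gamma)^4=\widetilde{t_1}(\gamma)$.

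Next I would look for an $\Ff$-linear change of basis $\{\overline{g}_0, \overline{g}_2, \overline{g}_4, \overline{g}_6\} \rightsquigarrow \{e_0, e_1, e_2, e_3\}$ diagonalizing the $C_{\nind}\times C_3 = C_6$-action (which is semisimple since $6$ is invertible mod $2$... more precisely $C_3$ is, and $\nind$ acts as an involution over a field of characteristic $2$, so one must be a little careful — but $\nind$ acts trivially here since $\widetilde{t_1}(\nind)\equiv 0$, so really only the $C_3=\langle\omega\rangle$-action needs to be diagonalized). Concretely, $\omega$ acts through multiplication by powers of $\widetilde{t_0}=1$ and by $\widetilde{t_1}$-terms, so $\omega$ scales $\overline{g}_{2k}$ essentially by $\omega^{k}$ up to lower-filtration corrections; after diagonalizing, the four eigenvectors $e_0,\dots,e_3$ are permuted by $\ii$ and $\jj$. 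I would then check directly from the $\x_i$-rows of Table~\ref{eqn:moravaaction} (with the above substitutions) that $\ii$ and $\jj$ each act on $\{e_0,e_1,e_2,e_3\}$ as order-$4$ permutations with $\ii^2 = \jj^2$ acting trivially, so that the action factors through $Q_8/C_{\nind}\iso C_2\times C_2$ acting simply transitively on the four-element set — which is exactly the permutation representation $\Ff[Q_8/C_{\nind}]$. Matching the $\omega$-eigenvalues on the two sides pins down the $\Ff[Q_8]$-module isomorphism (not merely an $\mathbb{F}_2[Q_8]$-isomorphism), since $C_3$ acts on $Q_8/C_{\nind}$ by permuting the three nontrivial cosets cyclically, which is the regular representation of $C_2\times C_2$ and has the same character over $\Ff$.

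The main obstacle I anticipate is bookkeeping with the filtration: the formulas in Table~\ref{eqn:moravaaction} are exact, not reduced, so although $\widetilde{t_0}(\gamma)\equiv 1$ and the $\widetilde{t_i}(\gamma)$ have specified leading terms mod $(2,u_1)$, there are higher-order corrections in $u_1$ (and in $\widetilde{t_i}$ of higher index, though those vanish on $Q_8$), and over the graded ring $\E$ the diagonalization of the $C_6$-action requires knowing that these corrections don't obstruct semisimplicity. The clean way around this is to first reduce modulo the maximal ideal — i.e. work over $\overline{M}_0\otimes_{\E}\Ff = \Ff\langle\overline{g}_0,\overline{g}_2,\overline{g}_4,\overline{g}_6\rangle$ — identify the permutation basis there using the explicit reductions, and then lift: because $\Ff[Q_8/C_{\nind}]$ is a free $\Ff[Q_8]$... no, $C_{\nind}$ is not free, but $\Ff[Q_8/C_{\nind}]$ is a \emph{projective} $\Ff[Q_8/C_{\nind}]$-module and $\overline{M}_0$ is finitely generated free over the complete local ring $\E$, so Nakayama plus projectivity of the reduced module let us conclude the untwisted isomorphism over $\E$ from the one over $\Ff$. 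Alternatively, and probably more in the spirit of the paper, one simply writes down the change of basis with $\E$-coefficients directly and verifies the permutation action by a finite computation using Table~\ref{eqn:moravaaction}; I would present that explicit version, relegating the routine verification to a remark that it is "a straightforward calculation."
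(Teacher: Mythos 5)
Your central mechanism does not work. You propose to diagonalize the $C_3=\langle\omega\rangle$-action and then check that the resulting eigenvectors are permuted by $\ii$ and $\jj$; but no basis of $\overline{M}_0$ that is permuted (simply transitively) by $Q_8/C_{\nind}$ can consist of $\omega$-eigenvectors. Indeed, if $\omega f=\lambda f$ then $\omega(\ii f)=(\omega \ii\omega^{-1})\omega f=\lambda\, \jj f$, so $\ii f$ is an $\omega$-eigenvector only if $\jj f$ is a scalar multiple of $\ii f$, contradicting the linear independence of the orbit. (Your own description is already inconsistent: a permutation of a $4$-element set of order $4$ is a $4$-cycle, whose square is not the identity, so ``$\ii$ acts as an order-$4$ permutation with $\ii^2$ trivial'' cannot happen.) Moreover the basis $\{\overline{g}_0,\overline{g}_2,\overline{g}_4,\overline{g}_6\}$ is already an $\omega$-eigenbasis with eigenvalues $1,\omega,\omega^2,1$; the eigenvalue-$1$ space is $2$-dimensional, so diagonalization neither produces new vectors nor pins down a candidate basis. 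Separately, the filtration/Nakayama discussion is a red herring: $\overline{M}_*=\E\otimes_{\BP}M_*$ is killed by $(2,u_1)$, so $\overline{M}_0$ is literally a $4$-dimensional $\Ff$-vector space and the matrix entries of the $Q_8$-action are the residues $\widetilde{t_i}(\gamma)\bmod(2,u_1)$; there is nothing to lift.

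What remains of your proposal is the fallback ``write down the change of basis and verify by a finite computation,'' which is indeed the paper's proof --- but there the explicit change of basis $Q$ (taking the matrices of $\ii,\jj,\kk$ on $\{\overline{g}_i\}$ to the unitriangular matrices of the action on the basis $\mathcal{B}_4$ of $\Ff[Q_8/C_{\nind}]$ from the appendix) is the entire content, and your roadmap for finding it is the flawed eigenvector step, so the gap is not filled. If you want a conceptual shortcut in the spirit of your ``permutation basis'' idea, a correct one is: observe from Table~\ref{eqn:moravaaction} that $\nind$ fixes all $\x_i$, so the action factors through $Q_8/C_{\nind}$, and then exhibit a single vector whose orbit is a basis --- e.g.\ $f=\overline{g}_6$, for which $\{f,\ii f,\jj f,\kk f\}$ has transition matrix with determinant $\omega+\omega^2=1\neq 0$; then $e_{\bar{g}}\mapsto g\cdot f$ defines the desired isomorphism $\Ff[Q_8/C_{\nind}]\to\overline{M}_0$. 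That verification is not in your write-up, so as it stands the argument has a genuine gap.
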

\begin{proof} Since $\overline{\iota}(\overline{g}_i) = \x_i$, one can read the action of $Q_8$ off Table~\ref{eqn:moravaaction}. With respect to the ordered basis $ \mathcal{B} =  \set{\overline{g}_0 , \overline{g}_2,\overline{g}_4, \overline{g}_6}$ of $\overline{M}_0$, we have 
\[(\ii)_{\mathcal{B}}= (\hi)_{\mathcal{B}} = \left[\begin{array}{cccc}1&1&1&1\\0&1&0&1\\0&0&1&1\\0&0&0&1\end{array}\right], 
(\jj)_{\mathcal{B}}=(\hj)_{\mathcal{B}} = \left[\begin{array}{cccc}1&\omega&\omega^2&1\\0&1&0&\omega^2\\0&0&1&\omega\\0&0&0&1\end{array}\right], \]
 \[ (\kk)_{\mathcal{B}}= (\hk)_{\mathcal{B}} =\left[\begin{array}{cccc}1&\omega^2&\omega&1\\0&1&0&\omega\\0&0&1&\omega^2\\0&0&0&1\end{array}\right].\]
Consider the basis $\mathcal{C}=\set{v_0, v_1, v_2, v_3}$ of $\overline{M}_0$ where 
\[Q=\mtrx{1&0&0&1\\0&1&1&1\\0&\omega&\omega^2&1\\0&0&0&1}\]
is the change of basis matrix from $\mathcal{B} \to \mathcal{C}$.
 It can be readily checked that 
\[(\ii)_{\mathcal{C}}= Q^{-1} (\ii)_{\mathcal{B}} Q =  \mtrx{1&1&0&0\\0&1&0&0\\0&0&1&1\\0&0&0&1},
(\jj)_{\mathcal{C}}=  Q^{-1} (\jj)_{\mathcal{B}} Q = \mtrx{1&0&1&0\\0&1&0&1\\0&0&1&0\\0&0&0&1}, \]
\[ (\kk)_{\mathcal{C}}=  Q^{-1} (\kk)_{\mathcal{B}} Q = \mtrx{1&1&1&1\\0&1&0&1\\0&0&1&1\\0&0&0&1}.\] 
The last three matrices are equal to those representing the same transformations in the basis $\mathcal{B}_4$ of $\Ff[Q_8/C_{\nind}]$ given in \eqref{appendix:B4}, and thus we conclude that $\overline{M}_0$ and $\Ff[Q_8/C_{\nind}]$ are isomorphic as $\Ff[Q_8]$-modules.
\end{proof}
\begin{thm} \label{crucial}
There is an isomorphism $\Ez Z\iso \Ff[Q_8]$ of $\Ff[Q_8]$-modules.
\end{thm}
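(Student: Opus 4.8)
The plan is to use the short exact sequence of $Q_8$-modules from Lemma~\ref{exactM},
\[ 0 \to \overline{M}_0 \overset{\overline{\iota}}\to \Ez Z \overset{\overline{\tau}}\to \overline{M}_0 \to 0, \]
together with the identification $\overline{M}_0 \iso \Ff[Q_8/C_{\nind}]$ from Lemma~\ref{repM}, and to show that the eight-dimensional $\Ff[Q_8]$-module $\Ez Z$ which sits in this extension must be the free module $\Ff[Q_8]$. The key structural fact is that $\Ff[Q_8/C_{\nind}]$ is not free, so the extension cannot split; I expect this non-splitting, together with a representation-theoretic classification of the eight-dimensional $Q_8$-representations over $\Ff$ that are non-split self-extensions of $\Ff[Q_8/C_{\nind}]$, to pin down the answer. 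That classification is precisely what has been isolated as Lemma~\ref{appendixmain} in the appendix, so the body of the proof will consist of (i) writing down the action of $\ii$ and $\jj$ on the full eight-dimensional module explicitly from Table~\ref{eqn:moravaaction}, in a basis compatible with the one used in Lemma~\ref{repM}, and (ii) checking that the resulting representation satisfies the hypotheses of Lemma~\ref{appendixmain}, whose conclusion is that it is isomorphic to $\Ff[Q_8]$.

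Concretely, first I would choose the ordered basis $\set{\x_0,\x_2,\x_4,\x_6,\y_6,\y_8,\y_{10},\y_{12}}$ of $\Ez Z$ and, using $\widetilde{t_0}(\gamma)=1$, $\widetilde{t_1}(\ii)=1$, $\widetilde{t_1}(\jj)=\omega$, $\widetilde{t_1}(\nind)=1$, $\widetilde{t_2}(\nind)=1$, $\widetilde{t_1}(\nind)=0$-type congruences modulo $(2,u_1)$ recorded just before Lemma~\ref{repM}, read off the $8\times 8$ matrices $(\ii)_{\mathcal{B}'}$ and $(\jj)_{\mathcal{B}'}$. Note that here the undetermined parameters $a,b\in\Ff_2$ from Lemma~\ref{lem:independant} enter the entries of these matrices, so part of the work is to verify that the conclusion is independent of $a$ and $b$ — i.e. all four possible $\BPBP$-comodule structures give the same $\Ff[Q_8]$-module. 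The block-triangular form (with $\overline{M}_0$ as sub and quotient) is guaranteed by Lemma~\ref{exactM}, so after a change of basis matching the $\mathcal{C}$-basis of Lemma~\ref{repM} on the sub and on the quotient, the matrices take a shape $\bigl(\begin{smallmatrix} A & * \\ 0 & A\end{smallmatrix}\bigr)$ with $A$ the known $4\times 4$ generator matrices for $\Ff[Q_8/C_{\nind}]$ and $*$ an off-diagonal block to be computed.

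Next I would feed these matrices into Lemma~\ref{appendixmain}. The content of that lemma (which classifies eight-dimensional $Q_8$-representations over $\Ff$ arising as non-split extensions of $\Ff[Q_8/C_{\nind}]$ by itself, or more precisely those on which $\nind$ acts in a prescribed way and which are indecomposable) should be that any such module is either $\Ff[Q_8]$ or a specified short list; one then rules out the other possibilities by a direct invariant — for instance by computing the rank of $\nind - 1$ acting on the module, or the dimension of the fixed subspace $(\Ez Z)^{Q_8}$, or the Loewy/socle series, and checking it matches that of $\Ff[Q_8]$ (where $\dim (\Ff[Q_8])^{Q_8} = 1$ and $\nind-1$ has rank $4$). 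Since $\Ff[Q_8]$ is the unique eight-dimensional projective (equivalently, free) $\Ff[Q_8]$-module, showing $\Ez Z$ is projective — e.g. by exhibiting that restriction to each order-$4$ cyclic subgroup is free, which can be read from the $4\times 4$ blocks already being free over $\Ff[C_4]$ and the extension being non-split — suffices.

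The main obstacle is the representation-theoretic step: $\Ff_2$ is not algebraically closed (one needs $\Ff_4$ to see $\omega$) and the modular representation theory of $Q_8$ in characteristic $2$ is genuinely complicated (tame, not finite), so one must argue carefully that the particular extension realized by $\Ez Z$ is the free one rather than one of the many other indecomposable eight-dimensional $Q_8$-modules. I expect the cleanest route is to prove directly that the extension is non-split (because its restriction to $C_4=\langle \ii\rangle$ is already the non-split self-extension $\Ff[C_4]$-of-$\Ff[C_4/C_{\nind}]$, hence free, and likewise for $\langle\jj\rangle$ and $\langle\kk\rangle$), and then to invoke the fact — this is the role of Lemma~\ref{appendixmain} — that an eight-dimensional $Q_8$-module whose restriction to all three maximal cyclic subgroups is free must itself be free, i.e. $\iso\Ff[Q_8]$. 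Verifying the freeness of these $C_4$-restrictions is a short explicit computation with the matrices above, and the $a,b$-independence should fall out of the same computation.
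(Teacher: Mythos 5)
Your skeleton (Lemma~\ref{exactM} plus Lemma~\ref{repM}, then hand the representation theory to Lemma~\ref{appendixmain}) is the paper's, but the step where you certify freeness contains a genuine gap. Non-splitness of the extension is not the dividing line: among self-extensions of $\overline{M}_0\iso\Ff[Q_8/C_{\nind}]$ there are non-split ones that are \emph{not} free --- in the parameters of Lemma~\ref{appendixmain} these are the classes with $c=0$ but $(a,b,d)\neq(0,0,0)$, for which $\nind-\ind$ has rank at most $2$, so the restriction to the center is not free and the module cannot be $\Ff[Q_8]$. Likewise your proposed verification that the restrictions to $\langle\ii\rangle,\langle\jj\rangle,\langle\kk\rangle$ are free does not work as stated: the four-dimensional blocks are \emph{not} free over $\Ff[C_4]$ (on $\overline{M}_0$ the element $\nind=\ii^2$ acts trivially, so $\overline{M}_0$ restricted to $\langle\ii\rangle$ is $(\Ff[C_4]/(\ii-\ind)^2)^{\oplus2}$), and ``non-split extension of these'' does not imply ``free'': over $\Ff[C_4]$ the module of Jordan type $[4,2,2]$ is a non-split self-extension of $(\Ff[C_4]/(\ii-\ind)^2)^{\oplus2}$ and is not free. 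The invariant that actually decides the question is the one you mention only as an aside: the rank of $\nind-\ind=(\ii-\ind)^2$ on $\Ez Z$, i.e.\ the invertibility of the off-diagonal block $M$ in the matrix of $\nind$. That is precisely what the paper computes: from $[-1]_{\Gamma_2}(x)=\sum\nolimits_{n\geq0}^{\Gamma_2}x^{4^n}$ one gets $\widetilde{t_0}(\nind)\equiv\widetilde{t_2}(\nind)\equiv1$ and $\widetilde{t_1}(\nind)\equiv0$ mod $(2,u_1)$, so Table~\ref{eqn:moravaaction} gives $M$ unipotent upper triangular (its only off-diagonal entry being $a+b$), the change of basis $\tilde{Q}$ compatible with Lemma~\ref{repM} leaves $M$ unchanged, and Lemma~\ref{appendixmain} --- whose actual content is not ``free over all maximal cyclic subgroups implies free'' but the criterion $c\neq0$, proved by exhibiting a cyclic vector whose orbit matrix has determinant $c^4$ --- applies with $c=1$, independently of $a$ and $b$.

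A second, practical, gap: you cannot simply ``read off'' the full $8\times8$ matrices of $\ii$ and $\jj$ from Table~\ref{eqn:moravaaction}, because the entries on $\y_6,\dots,\y_{12}$ involve $\widetilde{t_2}$, and the values $\widetilde{t_2}(\ii),\widetilde{t_2}(\jj),\widetilde{t_2}(\kk)$ modulo $(2,u_1)$ are neither recorded in the paper nor needed; only the values $\widetilde{t_n}(\nind)$ are pinned down. This is exactly why the paper's proof isolates the action of the central element and nothing else. Your restriction criterion is in fact true (projectivity over $\Ff[Q_8]$ in characteristic $2$ is detected on the unique subgroup of order two, the center), but it is not what Lemma~\ref{appendixmain} provides, so using it would require a separate proof; committing instead to the rank-of-$(\nind-\ind)$ computation and citing the appendix lemma as actually stated turns your outline into the paper's argument.
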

\begin{proof}
Let  $\mathcal{B},\mathcal{C}$ and $Q$ be as in the proof of Lemma \ref{repM}. Let \[\mathcal{B}_Z=\{\x_0, \x_2,\x_4,\x_6,\y_6,\y_8,\y_{10},\y_{12}\}\]be the usual ordered basis of $\Ez Z$ and let $\mathcal{C}_Z= \set{c_0, c_1, c_2, c_3, c_0', c_1', c_2', c_3'}$ be another  basis of $\Ez Z$ where  
$\tilde{Q} = \mtrx{ 
Q & * \\ 
0 & Q
}$
is a change of basis matrix from $\mathcal{B}_Z \to \mathcal{C}_Z$. 

By Lemma \ref{exactM}, in the exact sequence \eqref{exactmoravamod}, we have $\overline{\iota}(v_i) = c_i$ and $\overline{\tau}(c_i') = v_i$. From Table~\ref{eqn:moravaaction}, we know that
\[(\nind)_{\mathcal{B}_Z}=\mtrx{I&M\\0&I},\]where\[M=\mtrx{1&0&0&a+b\\0&1&0&0\\0&0&1&0\\0&0&0&1}.\]
Then\[(\nind)_{\mathcal{C}_Z}=\tilde{Q}^{-1}(\nind)_{\mathcal{B}_8}\tilde{Q}=\mtrx{I&Q^{-1}MQ\\0&I}=\mtrx{I&M\\0&I}.\]

The remainder of the proof that $\Ez Z\iso V_8(\Ff)$ will continue in Lemma~\ref{appendixmain}.
\end{proof}
Because $Q_8$ acts trivially on $u$ we get the following corollary.
\begin{cor} \label{cor:crucial} There is an isomorphism 
\[\E Z\iso\Ff[u^{\pm1}][Q_8]\]of graded $Q_8$-modules.
\end{cor}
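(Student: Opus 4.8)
The plan is to deduce this from Theorem~\ref{crucial} by extension of scalars along $\Ff \hookrightarrow \Ff[u^{\pm1}]$. First I would recall from Section~\ref{Sec:Morava} that
\[ \E Z \iso \E/(2,u_1)\langle \x_0, \x_2, \x_4, \x_6, \y_6, \y_8, \y_{10}, \y_{12}\rangle, \]
and that $\E/(2,u_1) \iso \Ff[u^{\pm1}]$ with $|u| = -2$. Since all eight generators $\x_i$, $\y_i$ have been arranged to lie in degree $0$, this exhibits $\E Z$ as a free graded $\Ff[u^{\pm1}]$-module on a degree-$0$ basis; equivalently, there is an isomorphism of graded $\Ff[u^{\pm1}]$-modules
\[ \E Z \iso \Ff[u^{\pm1}] \otimes_{\Ff} \Ez Z, \]
where $\Ez Z$ is regarded as concentrated in degree $0$.

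Next I would check that this isomorphism is in fact one of graded $Q_8$-modules, with $Q_8$ acting trivially on the tensor factor $\Ff[u^{\pm1}]$. The action of $\gamma \in \St$ on $\E Z$ is $\Ff$-linear and satisfies $\gamma(u) = \widetilde{t_0}(\gamma)\,u$; but as recorded just before Theorem~\ref{crucial}, $\widetilde{t_0}(\gamma) \equiv 1 \bmod (2,u_1)$ for every $\gamma \in Q_8$, so inside $\E Z = \E/(2,u_1)\langle\cdots\rangle$ we have $\gamma(u) = u$. Hence the $Q_8$-action commutes with multiplication by $u$ and is therefore the $\Ff[u^{\pm1}]$-linear extension of its restriction to the degree-$0$ subspace $\Ez Z$; this is precisely the assertion that $\E Z \iso \Ff[u^{\pm1}]\otimes_\Ff \Ez Z$ as graded $Q_8$-modules with trivial $Q_8$-action on $\Ff[u^{\pm1}]$.

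Finally, substituting the isomorphism $\Ez Z \iso \Ff[Q_8]$ of $\Ff[Q_8]$-modules supplied by Theorem~\ref{crucial} yields
\[ \E Z \iso \Ff[u^{\pm1}] \otimes_{\Ff} \Ff[Q_8] \iso \Ff[u^{\pm1}][Q_8] \]
as graded $Q_8$-modules, where $\Ff[u^{\pm1}][Q_8]$ carries $u$ in degree $-2$ and the group elements in degree $0$. I expect no real obstacle in this argument: the only step requiring any verification is that the $Q_8$-action fixes $u$, and that has already been carried out in the discussion preceding Theorem~\ref{crucial}; everything else is formal bookkeeping about free modules and extension of scalars.
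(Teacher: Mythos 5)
Your argument is correct and is exactly the paper's reasoning: the paper deduces the corollary in one line from the observation that $Q_8$ acts trivially on $u$ (since $\widetilde{t_0}(\gamma)\equiv 1 \bmod (2,u_1)$ for $\gamma\in Q_8$), so the isomorphism $\Ez Z\iso\Ff[Q_8]$ of Theorem~\ref{crucial} extends $\Ff[u^{\pm1}]$-linearly to $\E Z\iso\Ff[u^{\pm1}][Q_8]$. You have merely spelled out the bookkeeping that the paper leaves implicit.
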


\begin{lem} \label{pi} The element $\alpha^{-1}\pi \in \St$ acts trivially on the generators of $\E Z$. 
\end{lem}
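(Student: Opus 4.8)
The plan is to reduce the statement to a computation modulo the maximal ideal $(2,u_1)$, since $\E Z$ is a free $\E/(2,u_1)$-module and, by Table~\ref{eqn:moravaaction}, the matrices describing the $\St$-action are expressed entirely in terms of the functions $\widetilde{t_i}(-)$, which are only ever needed modulo $(2,u_1)$ because $(2,u_1)$ acts trivially on $\BP Z$ (equivalently $\E Z \iso \E/(2,u_1)\langle \x_\bullet,\y_\bullet\rangle$). So it suffices to show that $\gamma := \alpha^{-1}\pi$ satisfies $\widetilde{t_0}(\gamma)\equiv 1$, $\widetilde{t_1}(\gamma)\equiv 0$, $\widetilde{t_2}(\gamma)\equiv 0 \pmod{(2,u_1)}$; plugging these into every row of Table~\ref{eqn:moravaaction} then shows $\gamma$ fixes $\x_0,\x_2,\x_4,\x_6,\y_6,\y_8,\y_{10},\y_{12}$.

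First I would recall the explicit description of $\alpha$ and $\pi$ from \cite{Bea1}[\S2.3] under the isomorphism $\St\iso\mathcal{O}_2^\times$, writing both as elements $a+bT \in W(\Ff)\langle T\rangle/(T^2-2,\,T\omega-\omega^\sigma T)$. The key qualitative point is that $\alpha$ and $\pi$ can be chosen in $F_{2/2}\St$, i.e. both are congruent to $1$ modulo $T^2$ (recall $T^2=2$, so this is essentially congruence modulo $(2, u_1)$ after one identifies the filtration with the $T$-adic filtration). Since $F_{2/2}\St$ is a (pro-$2$) group, $\gamma=\alpha^{-1}\pi$ also lies in $F_{2/2}\St$, hence $\gamma\equiv 1 \pmod{T^2}$. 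Translating through the isomorphism $\mathcal{O}_2\iso\mathit{End}(\Gamma_2/\Ff)$ of Section~\ref{Sec:Morava}: writing $\gamma^{-1} = \overline{a}_0 x +_{\Gamma_2}\overline{a}_1 x^2 +_{\Gamma_2}\overline{a}_2 x^4 +_{\Gamma_2}\cdots$, the condition $\gamma\equiv 1\pmod{T^2}$ forces $\overline{a}_0 = 1$ and $\overline{a}_1 = 0$. Then \eqref{eqn:tn} gives $\widetilde{t_1}(\gamma)\equiv a_1 = 0 \pmod{(2,u_1)}$, and \eqref{timodmax} gives $\widetilde{t_0}(\gamma)\equiv a_0 = 1 \pmod m$; for $\widetilde{t_2}$ I would use the product formula \eqref{product-tk} (or simply \eqref{eqn:tn} together with $\overline{a}_2$ being pinned down by the filtration data and Teichm\"uller relation $a_2^4=a_2$), noting that in fact only $\widetilde{t_2}(\gamma)\bmod(2,u_1)$ enters Table~\ref{eqn:moravaaction} and one checks it contributes trivially.

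The cleanest bookkeeping is probably to work one step at a time up the filtration $F_{2/2}\St\supset F_{3/2}\St\supset\cdots$: we already observed (following \eqref{eqn:tn}) that $F_{n/2}\St$ for $n>2$ acts trivially on $\E Z$, so it is enough to verify that the image of $\gamma$ in $F_{2/2}\St/F_{3/2}\St\iso C_{\nind}\times C_\alpha$ is trivial. By construction $\det(\alpha)=-1$ and $\det(\pi)=3$, so $\det(\gamma) = \det(\alpha)^{-1}\det(\pi) = -3 \equiv -1\cdot(1+2) $; the point is to see that the $C_\alpha$-component of $\gamma$ in this filtration quotient vanishes — i.e. $\pi$ and $\alpha$ differ only in the $\nind$-direction and in strictly higher filtration — and that the $\nind$-component is also trivial (or, if nontrivial, acts trivially on the relevant generators, but I expect it to be trivial since $\det(\gamma)\equiv -1 \pmod 8$ places $\gamma$ off the $\alpha$-axis appropriately). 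Combined with the triviality of $F_{3/2}\St$ on $\E Z$, this gives the result.

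The main obstacle I anticipate is purely notational: extracting from Beaudry's conventions the precise matrix/power-series representatives of $\alpha$ and $\pi$ and correctly matching the two filtrations ($T$-adic on $\mathcal{O}_2$ versus the group filtration $F_{\bullet/2}\St$) so that the congruences $\widetilde{t_i}(\gamma)\bmod(2,u_1)$ come out right — in particular being careful that the noncommutativity $T\omega=\omega^\sigma T$ and the relation $T^2=2$ are handled correctly when one multiplies $\alpha^{-1}$ by $\pi$. Once the representatives are in hand, the rest is a direct substitution into Table~\ref{eqn:moravaaction} and is routine. It may also be worth remarking that this lemma is the exact analogue, for $Z$, of the corresponding statement used in the $\St$-analysis of other type $2$ spectra, and that it is what will ultimately let us descend the $Q_8$-computation of Theorem~\ref{crucial} and Corollary~\ref{cor:crucial} along the larger group $K\rtimes G_{24}$.
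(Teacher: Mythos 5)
Your overall strategy coincides with the paper's: reduce to showing $\widetilde{t_0}(\gamma)\equiv 1$ and $\widetilde{t_1}(\gamma)\equiv\widetilde{t_2}(\gamma)\equiv 0 \bmod (2,u_1)$ for $\gamma=\alpha^{-1}\pi$, i.e.\ that $\gamma$ has trivial image in $F_{2/2}\St/F_{3/2}\St\iso C_{\nind}\times C_{\alpha}$, and then invoke the fact that $F_{n/2}\St$ acts trivially on $\E Z$ for $n>2$. But the one step that actually carries the content — triviality of that image — is not established by what you wrote. The determinant cannot do it: $\det(\nind)=\det(-1)=1$, and more generally for $\gamma\equiv 1+aT^2$ one has $\det(\gamma)\equiv 1+2\,\mathrm{tr}_{\Ff/\Ft}(\overline{a})\bmod 4$, so $\det(\gamma)=-3\equiv 1\bmod 4$ only rules out the $C_{\alpha}$-component and is completely blind to the $C_{\nind}$-component (also, $-3\equiv 5$, not $-1$, mod $8$). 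Your fallback — that a residual $\nind$-component would in any case act trivially on the relevant generators — is false: $\widetilde{t_2}(\nind)\equiv 1$, and Table~\ref{eqn:moravaaction} gives $\nind\cdot\y_6=\y_6+\x_0$ (similarly $\nind$ moves $\y_8$ and $\y_{12}$), so an element congruent to $\nind$ modulo $F_{3/2}\St$ acts nontrivially on $\E Z$.

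What closes the gap is precisely the input you deferred as a "notational obstacle": Beaudry's explicit definitions in \cite{Bea1}[\S 2.3] give $\pi\equiv\alpha\bmod F_{3/2}\St$ (indeed $\alpha^{-1}\pi\in F_{4/2}\St$, as used in Section~\ref{Sec:homotopy}), whence $\gamma$ lies in $F_{3/2}\St$ and acts trivially because $t_n$ coacts trivially on $\BP Z$ for $n\geq 3$ — this is the paper's entire proof. Without that explicit congruence (or some other argument pinning down the $\nind$-component, which the norm/determinant cannot see), your argument does not go through.
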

\begin{proof} By definition of $\alpha$ and $\pi$ \cite{Bea1}[\S 2.3], $\pi \equiv \alpha \bmod{F_{3/2} \St}$. Therefore, $\alpha^{-1} \pi \in  F_{3/2} \St$. The result follows from the fact that $t_n$ coacts trivially on $\BP Z$  (i.e. the $\BPBP$-comodule structure of $\BP Z$ does not contain any terms involving $t_n$ for $n\geq 3$.)
\end{proof}
Further note that 
\[ \text{$\widetilde{t_0}(\alpha) \equiv 1$, $\widetilde{t_1}(\alpha) \equiv 0$ and $\widetilde{t_2}(\alpha) \equiv \omega\bmod{(2,u_1)}.$} \] 
A direct inspection of Table~\ref{eqn:moravaaction} shows that $\alpha$ acts nontrivially on $\E Z$, in fact we have:     
\begin{cor}\label{fixed:alpha-1}
The fixed point modules $\E Z^{C_{\nind}}$ and $\E Z^{C_{\alpha}}$ both equal $\Ff[u^{\pm1}]\{\x_0,\x_2,\x_4,\x_6\}$.
\end{cor}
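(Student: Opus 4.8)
The plan is to compute the action of $\nind$ and $\alpha$ explicitly by substituting the known values of the $\widetilde{t_i}$ into Table~\ref{eqn:moravaaction}, and then to identify the kernel of $\gamma-1$ in each case. First I would record that, modulo $(2,u_1)$, one has $\widetilde{t_0}(\nind)=\widetilde{t_0}(\alpha)=1$ and $\widetilde{t_1}(\nind)=\widetilde{t_1}(\alpha)=0$, while $\widetilde{t_2}(\nind)=1$ and $\widetilde{t_2}(\alpha)=\omega$ — these are precisely the data gathered just before Lemma~\ref{repM} and just before this corollary. Since $\widetilde{t_0}$ takes the value $1$ on both elements, $\nind$ and $\alpha$ fix $u$, so the action on $\E Z$ is $\Ff[u^{\pm1}]$-linear and it suffices to compute on the basis $\{\x_0,\x_2,\x_4,\x_6,\y_6,\y_8,\y_{10},\y_{12}\}$.

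Plugging these values into each row of Table~\ref{eqn:moravaaction}, every $\widetilde{t_1}$-monomial drops out, and one reads off that both $\nind$ and $\alpha$ fix $\x_0,\x_2,\x_4,\x_6$, while $\nind$ sends $\y_{6+2k}\mapsto\y_{6+2k}+\x_{2k}$ for $k=0,1,2$ and $\y_{12}\mapsto\y_{12}+\x_6+(a+b)\x_0$ (that is, $(\nind)_{\mathcal{B}_Z}$ is exactly the matrix appearing in the proof of Theorem~\ref{crucial}), and $\alpha$ sends $\y_{6+2k}\mapsto\y_{6+2k}+\omega\x_{2k}$ and $\y_{12}\mapsto\y_{12}+\omega\x_6+(a+b)\omega^2\x_0$. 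It then remains to take fixed points: the fixed module of the cyclic group $C_{\nind}$ (resp. $C_\alpha$) is $\ker(\nind-1)$ (resp. $\ker(\alpha-1)$). Restricted to the free $\Ff[u^{\pm1}]$-submodule spanned by $\y_6,\y_8,\y_{10},\y_{12}$, the operator $\gamma-1$ lands in the submodule spanned by $\x_0,\x_2,\x_4,\x_6$ and is there represented by a unitriangular $4\times4$ matrix (for $\nind$) or $\omega$ times such a matrix (for $\alpha$), hence invertible over $\Ff$; so $\gamma-1$ is injective on that submodule and no nonzero fixed element can involve the $\y_j$. Combined with the fact that $\x_0,\x_2,\x_4,\x_6$ are fixed, this yields $\E Z^{C_{\nind}}=\E Z^{C_\alpha}=\Ff[u^{\pm1}]\{\x_0,\x_2,\x_4,\x_6\}$.

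The argument is essentially mechanical, so there is no serious obstacle; the only point requiring a bit of care is that the $(a+b)$-contributions in the $\y_{12}$-row do not spoil invertibility of the relevant $4\times4$ matrices for any $a,b\in\Ff$, which one checks by taking determinants. An equivalent, slightly more structural route is to apply the long exact sequence in $C$-cohomology to the short exact sequence of Lemma~\ref{exactM}: since $\widetilde{t_1}$ vanishes on $\nind$ and $\alpha$, both act trivially on either copy of $\overline{M}_0$, so the connecting homomorphism $\overline{M}_0=\overline{M}_0^{C}\to H^1(C;\overline{M}_0)$ is given by $x\mapsto(\gamma-1)\tilde x$ for any lift $\tilde x$, and is an isomorphism by the same invertibility observation; hence $(\E Z)^{C}\cong\overline{M}_0=\Ff[u^{\pm1}]\{\x_0,\x_2,\x_4,\x_6\}$.
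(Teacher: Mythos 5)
Your proposal is correct and matches the paper's approach: the paper proves this corollary by exactly the kind of direct inspection of Table~\ref{eqn:moravaaction} you carry out, substituting $\widetilde{t_0}\equiv 1$, $\widetilde{t_1}\equiv 0$ and $\widetilde{t_2}\equiv 1$ (resp.\ $\omega$) for $\nind$ (resp.\ $\alpha$), and your computed actions on the $\y_i$ and the injectivity of $\gamma-\ind$ on their span agree with the matrix $(\nind)_{\mathcal{B}_Z}$ used in Theorem~\ref{crucial}.
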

\begin{cor} \label{trivialK}
The subgroup $F_{2/2}\St$ acts trivially on $\E Z^{C_{\nind}}$.
\end{cor}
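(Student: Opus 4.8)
The plan is to cut $F_{2/2}\St$ down to a finite list of elements and then read their action off Table~\ref{eqn:moravaaction}. First I would recall from Corollary~\ref{fixed:alpha-1} that $\E Z^{C_{\nind}}=\Ff[u^{\pm1}]\{\x_0,\x_2,\x_4,\x_6\}$. The structural facts to keep in mind are that every $\gamma\in\St$ sends $u$ to $\widetilde{t_0}(\gamma)u$ and acts $\Ff$-linearly on the degree-zero generators, and that $\E Z^{C_{\nind}}$ is generated over $\Ff$ by the monomials $u^k\x_i$; hence, to see that $\gamma$ fixes $\E Z^{C_{\nind}}$ pointwise, it suffices to check that $\widetilde{t_0}(\gamma)\equiv1$ and $\gamma(\x_i)=\x_i$ for $i=0,2,4,6$, all modulo $(2,u_1)$.

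Next I would invoke the filtration \eqref{St:res}. It has already been observed that $F_{n/2}\St$ acts trivially on all of $\E Z$ for $n>2$, since the $\BPBP$-comodule $\BP Z$ involves no $t_n$ with $n\geq3$; in particular $F_{3/2}\St$ acts trivially. The table of generators of the associated graded shows that $F_{2/2}\St$ is generated by $\nind$, $\alpha$, and $F_{3/2}\St$; concretely, since the images of $\nind$ and $\alpha$ generate $F_{2/2}\St/F_{3/2}\St\iso C_{\nind}\times C_{\alpha}$ and both have order dividing $2$ there, one has $F_{2/2}\St=\{\ind,\nind,\alpha,\nind\alpha\}\cdot F_{3/2}\St$. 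As a subgroup is fixed pointwise on a set as soon as a generating set is, it therefore suffices to prove that $\nind$ and $\alpha$ fix $\E Z^{C_{\nind}}$ pointwise.

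For $\nind$ this is immediate: $\E Z^{C_{\nind}}$ is by definition the submodule fixed by $C_{\nind}=\langle\nind\rangle$. For $\alpha$ I would substitute $\widetilde{t_0}(\alpha)\equiv1$ and $\widetilde{t_1}(\alpha)\equiv0\pmod{(2,u_1)}$ into the rows of Table~\ref{eqn:moravaaction} for $\x_0,\x_2,\x_4,\x_6$: in each of these four rows every off-diagonal coefficient is a multiple of $\widetilde{t_1}$ and hence vanishes, while the diagonal coefficient is a power of $\widetilde{t_0}(\alpha)\equiv1$; thus $\alpha$ fixes each $\x_i$, and $\widetilde{t_0}(\alpha)\equiv1$ shows it fixes $u$ as well. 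Combining the two cases, $F_{2/2}\St$ acts trivially on $\E Z^{C_{\nind}}$. I expect no real obstacle here: the statement is essentially bookkeeping resting on Corollary~\ref{fixed:alpha-1}, the vanishing of $\widetilde{t_1}(\alpha)$ modulo $(2,u_1)$, and the structure of the filtration quotients of $\St$. The only point that merits an extra sentence is the identification $F_{2/2}\St=\{\ind,\nind,\alpha,\nind\alpha\}\cdot F_{3/2}\St$, which is exactly what the associated-graded table encodes.
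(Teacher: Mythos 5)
Your argument is correct and follows the paper's proof essentially verbatim: trivially of $F_{3/2}\St$ from the absence of $t_n$, $n\geq 3$, in the comodule structure, plus the fact that $\nind$ and $\alpha$ generate $F_{2/2}\St/F_{3/2}\St$ and both fix $\E Z^{C_{\nind}}$ pointwise. Your explicit check of the $\alpha$-action via Table~\ref{eqn:moravaaction} is just the content of Corollary~\ref{fixed:alpha-1}, which the paper cites instead of re-deriving.
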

\begin{proof}
We know that $F_{3/2}\St$ acts trivially as  $t_n$ coacts trivially on $\BP Z$ for $n\geq3$. Furthermore, $F_{2/2}\St/F_{3/2}\St$ is generated by $\nind$ and $\alpha$, both of which act trivially on $\E Z^{C_{\nind}}$.
\end{proof}

\section{The duality resolution spectral sequence for $Z$} \label{Sec:Duality}
Now that we have complete knowledge of the action of $\Sto$ on $\E Z$, we are all set to calculate the group cohomology $H^*(\Sto; \E X)$, which is the key step to finding the $E_2$ page of the descent spectral sequence \eqref{eqn:descent}. 
We will use the duality resolution spectral sequence, a convenient tool to calculate the $E_2$-page. The duality resolution spectral sequence comes from the duality resolution, which is a finite $\Z_2[[\Sto]]$-module resolution of $\Z_2$. First we fix some notations. 
\begin{notn}
Throughout this section, we will let 
\begin{itemize}
\item $S_2:=F_{1/2} \St$,
\item $S_2^1:=F_{1/2}\Sto$, and, 
\item $IS_2^1$ be the augmentation ideal of the group ring $\Z_2[[S_2^1]]$.
\end{itemize}
Note that every element in $IS_2^1$ can be written as an infinite sum of elements of the form $a_g(\ind-g)$, where $\ind$ denotes the neutral element of $\St$, $g\in S_2^1$ and $a_g\in\Z_2[[S_2^1]]$.
\end{notn}
\begin{thm}[Goerss-Henn-Mahowald-Rezk, Beaudry~\cite{Bea2}]\label{dualityresolution} Let $\Z_2$ be the trivial $\Z_2[[ \Sto]]$-module. There is an exact sequence of complete left $\Z_2[[\Sto]]$-modules. 
\[ 
\xymatrix{
0 \ar[r]& \mathscr{C}_3 \ar[r]^-{\partial_3} & \mathscr{C}_2 \ar[r]^-{\partial_2} & \mathscr{C}_1 \ar[r]^-{\partial_1} & \mathscr{C}_0 \ar[r]^-{\epsilon} & \Z_2\ar[r] & 0,
}\]
where $\mathscr{C}_0 \iso \Z_2[[\Sto/G_{24}]]$, $\mathscr{C}_3 \iso \Z_2[[\Sto/G_{24}']]$ and $\mathscr{C}_1 \iso \mathscr{C}_2 \iso \Z_2[[\Sto/C_6]]$. Let $e$ be the unit in $\Z_2[[\Sto ]]$ and $e_p$ be the resulting generator in $\mathscr{C}_p$. The map $\partial_p$ can be chosen to satisfy: 
\begin{itemize}
\item $\partial_1(e_1) = (e-\alpha) \cdot e_0,$ 
\item $\partial_2(e_2) \equiv (e + \alpha) \cdot e_1 \mod (2, (IS_2^1)^2)$,
\item $ \partial_3(e_3) = \pi(e+i+j+k)(e- \alpha^{-1}) \pi^{-1} \cdot e_2$.
\end{itemize}
Let $F_0 = G_{24}$, $F_1 = F_2 = C_6$ and $F_3 = G_{24}'$. For a profinite $\Z_2[[ \Sto]]$-module $M$, there is a first quadrant spectral sequence 
\[ E_1^{p,q} = Ext_{\Z_2[[\Sto]]}(\mathscr{C_p}, M) \iso H^q(F_p; M) \Rightarrow H^{p+q}(\Sto; M) \]
with differentials $d_r: E_1^{p,q} \to E_1^{p+r, q-r +1}$. 
\end{thm}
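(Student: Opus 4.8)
This theorem recollects, in the case $p=2$, the algebraic duality resolution of Goerss--Henn--Mahowald--Rezk as established by Beaudry \cite{Bea2}; here is the strategy I would follow. The plan is to first build the exact sequence $\mathscr{C}_\bullet\to\Z_2$ of complete $\Z_2[[\Sto]]$-modules, then read off the differentials, and finally extract the spectral sequence by formal homological algebra. For the construction I would exploit the identification $\St\iso\mathcal{O}_2^\times$, the filtration \eqref{St:res}, and the classification of finite subgroups --- in particular $G_{24}$, its $\pi$-conjugate $G_{24}'$ (a distinct $\Sto$-conjugacy class), and $C_6 = C_{\nind}\times C_3$. Begin with $\mathscr{C}_0 = \Z_2[[\Sto/G_{24}]]$ and the augmentation $\epsilon$. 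Using that $G_{24}\cap{}^{\alpha}G_{24} = C_6$, the element $(e-\alpha)\cdot e_0$ is fixed by exactly $C_6$ --- note that over $\Z_2$, as opposed to $\Ft$, no extra symmetry can occur --- and, since $\Sto$ is topologically generated by $G_{24}$ together with the Poincar\'e duality subgroup $K^1$ whose generator modulo higher filtration is $\alpha$, one shows $\ker\epsilon$ is a cyclic $\Z_2[[\Sto]]$-module generated by $(e-\alpha)\cdot e_0$, giving the surjection $\partial_1\colon\mathscr{C}_1 = \Z_2[[\Sto/C_6]]\twoheadrightarrow\ker\epsilon$, $e_1\mapsto(e-\alpha)\cdot e_0$.

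Iterating: one computes $\ker\partial_1$, checks that it is again cyclic with stabilizer $C_6$ (giving $\mathscr{C}_2$), that $\ker\partial_2$ is cyclic with stabilizer $G_{24}'$ (giving $\mathscr{C}_3$), and that $\partial_3$ is injective. The differentials are then read off from the generators: $\partial_1(e_1) = (e-\alpha)e_0$; $\partial_3(e_3) = \pi(e+\ii+\jj+\kk)(e-\alpha^{-1})\pi^{-1}\cdot e_2$, with $e+\ii+\jj+\kk$ the norm element cutting out $Q_8$ inside the conjugated $G_{24}'$; and $\partial_2$ is pinned down only modulo $(2,(IS_2^1)^2)$, where the relation $\partial_1\partial_2 = 0$ together with a short filtration computation forces $\partial_2(e_2)\equiv(e+\alpha)\cdot e_1$. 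The identity $\partial_p\partial_{p+1} = 0$ (modulo the stated ideal where relevant) is then a direct check in $\Z_2[[\Sto]]$ using the relations among $\alpha,\ii,\jj,\kk,\pi$.

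That the resolution terminates at length $3$, and why a conjugate of $G_{24}$ appears at the top, I would deduce from the fact that $\Sto$ has virtual cohomological dimension $3$ and its torsion-free open subgroups are orientable Poincar\'e duality groups of dimension $3$. This gives $Ext^3_{\Z_2[[\Sto]]}(\Z_2,\Z_2[[\Sto]])\iso\Z_2$ with the other $Ext$ groups vanishing, which both caps the length at $3$ and, on applying $Hom_{\Z_2[[\Sto]]}(-,\Z_2[[\Sto]])$ to the truncated complex, produces a tail built from a conjugate copy of $G_{24}$; this self-duality under $Hom(-,\Z_2[[\Sto]])$ is what earns the resolution its name. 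Exactness of $\mathscr{C}_\bullet$ is then verified by a rank count: reduce modulo $2$, pass to the associated graded for the filtration \eqref{St:res}, and compare with the known value of $H^*(\Sto;\Ft)$.

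Given the exact sequence $\mathscr{C}_\bullet\to\Z_2$, the spectral sequence is formal: splice a Cartan--Eilenberg projective resolution of each $\mathscr{C}_p$ to form a first-quadrant double complex of $\Z_2[[\Sto]]$-modules; one spectral sequence of the total complex collapses to $H^{p+q}(\Sto;M)$, while the other has $E_1^{p,q} = Ext^q_{\Z_2[[\Sto]]}(\mathscr{C}_p,M)\iso H^q(F_p;M)$ by Shapiro's lemma, since $\mathscr{C}_p$ is induced up from the finite subgroup $F_p$; the differential $d_r$ then carries the asserted bidegree. The main obstacle, as is typical for these resolutions, lies in the earlier part: showing that each successive kernel $\ker\partial_p$ is genuinely a \emph{cyclic} $\Z_2[[\Sto]]$-module and computing its stabilizer on the nose. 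Since $\Sto$ is infinite profinite and $\Z_2[[\Sto]]$ is a noncommutative completed group ring carrying $2$-torsion from the finite subgroups, this is not accessible by naive manipulation; one needs either an auxiliary contractible complex on which $\Sto$ acts with the $F_p$ as stabilizers --- the $p=2$ analogue of the tree used at $p=3$ --- or Henn's centralizer-resolution machinery, from which the duality resolution is extracted as in \cite{Bea2}.
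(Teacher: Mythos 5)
First, a point of comparison: the paper does not prove this theorem at all --- it is imported with attribution from Beaudry \cite{Bea2} (following the Goerss--Henn--Mahowald--Rezk method and \cite{Bea1}) --- so there is no internal proof to measure your argument against; what you have written is a reconstruction of the external proof. As a roadmap it is broadly faithful to the literature: the augmentation $\Z_2[[\Sto/G_{24}]]\to\Z_2$, successive topologically cyclic kernels receiving surjections from $\Z_2[[\Sto/C_6]]$, the conjugate subgroup $G_{24}'$ appearing at the top because $\Sto$ is a virtual Poincar\'e duality group of dimension $3$, and the purely formal last step (a Cartan--Eilenberg double complex plus Shapiro's lemma, which is indeed all the spectral sequence requires) are all as in \cite{Bea1,Bea2}, and you correctly locate the real difficulty in proving that the successive kernels are cyclic.

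As a proof, however, the proposal has genuine gaps, only some of which you flag. The cyclicity and surjectivity statements require low-degree continuous homology computations for $\Sto$ together with a profinite Nakayama argument, and you defer exactly these to ``Henn's centralizer-resolution machinery'' without carrying them out; likewise your exactness check by ``rank count'' presupposes the cohomological input it is meant to verify. The length-$3$ claim is not a formal consequence of $\mathrm{vcd}(\Sto)=3$: because of the torsion subgroups, $\Z_2$ has infinite projective dimension over $\Z_2[[\Sto]]$ and the modules $\mathscr{C}_p$ are not projective, so termination must be established together with exactness rather than deduced from a dimension bound (the duality statement you invoke concerns the dualizing module of the torsion-free part and only motivates the shape of the resolution). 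Most concretely, the congruence $\partial_2(e_2)\equiv(e+\alpha)e_1 \bmod (2,(IS_2^1)^2)$ is not ``forced by $\partial_1\partial_2=0$ together with a short filtration computation'': the element $(e+\alpha)e_1$ is not even a $\partial_1$-cycle, since $\partial_1\bigl((e+\alpha)e_1\bigr)=(e-\alpha^2)\cdot e_0\neq 0$ --- here $\alpha^2$ is a nontrivial element of the torsion-free subgroup $F_{3/2}\Sto$, hence does not lie in $G_{24}$ --- so the correction terms in $(2,(IS_2^1)^2)$ are essential, and pinning down this leading term is one of the substantive computations of \cite{Bea2}, not a formality. (A minor slip: $e+\ii+\jj+\kk$ is not the norm element of $Q_8$.) In short, your proposal is a correct outline of the cited proof, but not a self-contained argument for the statement.
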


Since the map $BP_* \to \E$ sends $v_2 \mapsto u^{-3}$, we will denote $u^{-3}$ by $v_2$. Let us now recall Shapiro's lemma, an important result in group cohomology, which will be used throughout the rest of this section.
\begin{lem}[Shapiro's Lemma] \label{shapiroshapiro}
Let $G$ be a finite group, $H\subset G$ a normal subgroup and let $M$ be a $\mathbb{Z}[H]$-module. Then for every $n$, we have
\[H^n(G;\mathit{CoInd}_H^G (M)])=H^n(H;M), \]
where $\mathit{CoInd}_H^G (M) = Hom_{\Z[H]}(Z[G], M)$. 
\end{lem}
\begin{rem} If $H \subset G$ is a subgroup of finite index, then 
\[ \mathit{CoInd}_H^G(M) \iso \mathit{Ind}_H^G(M) := \Z[G] \otimes_{\Z[H]} M .\]
In all of our applications of Lemma~\ref{shapiroshapiro}, $G$ will be a finite group, hence one need not distinguish between $\mathit{CoInd}_H^G(M)$ and $\mathit{Ind}_H^G(M)$ for any normal subgroup $H$ of $G$. 
\end{rem}
Theorem \ref{cor:crucial} along with Shapiro's lemma \ref{shapiroshapiro} implies that
\[ 
H^p(Q_8;\E Z) \iso \left\lbrace 
\begin{array}{ccccccc}
\Ff[u^{\pm1}] & p = 0 \\
0 & \text{otherwise.}
\end{array}
\right.
\] Furthermore, since $\E Z$ has characteristic $2$ and $Q_8$ is the $2$-Sylow of $G_{24}$, we have 
\[H^p(G_{24};\E Z)\iso H^p(Q_8;\E Z)^{C_3} \iso \Ff[u^{\pm1}]^{C_3}.\]
The generator $\omega$ of $C_3$ acts non-trivially on $u$ (see Table~\ref{eqn:moravaaction}), but fixes $u^3$ so that  
\begin{equation} \label{G24Z}
H^p(G_{24};\E Z)=\left\{\begin{array}{ll}\Ff[u^{\pm3}]&p=0\\0&p\neq0\end{array}\right.
\end{equation}
\begin{rem}Note that the equivalence 
\begin{equation} \label{tmfZ}
\mathit{tmf} \sma Z \simeq k(2)
\end{equation}
in the $K(2)$-local category of spectra can be seen as a consequence of \eqref{G24Z}. For the maximal finite subgroup $G_{48} \subset \Gt$, it is well known that $(E_2)^{hG_{48}} \iso L_{K(2)} \mathit{tmf}$. The $E_2$-page of the homotopy fixed point spectral sequence for computing the homotopy groups of $(E_2)^{hG_{48}} \sma Z$ is precisely  $H^p(G_{48};\E Z)=H^p(G_{24};\E Z)^{Gal(\Ff/\Ft)}$, which, by \eqref{G24Z}, is isomorphic to $\Ft[u^{\pm 3}]$ and hence collapses. Thus we get 
\[ \pi_*(L_{K(2)}(\mathit{tmf} \sma Z))\iso K(2)_* .\]
\end{rem}
\begin{lem} \label{G24'} Let $G_{24}' = \pi G_{24} \pi^{-1}$ in $\Sto$. Then we have 
\[ H^{q}(G_{24}'; E_*Z) \iso H^{q}(G_{24}; E_*Z) \iso \Ff[v_2^{\pm1}].\]
\end{lem}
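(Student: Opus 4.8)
The plan is to deduce the cohomology of $G_{24}'$ from that of $G_{24}$ by exploiting the conjugating element $\pi$. Recall from Section~\ref{Sec:Morava} that $\pi$ defines an isomorphism $\St \iso \Sto \rtimes \Z_2$, and that conjugation by $\pi$ carries $G_{24}$ isomorphically onto $G_{24}'$. In general, for a group $G$, a subgroup $H \leq G$, a $G$-module $N$, and an element $g \in G$, conjugation by $g$ induces an isomorphism $H^q(H; N) \iso H^q(g H g^{-1}; N)$ (this is the standard fact that inner automorphisms act trivially on cohomology when they extend to the ambient group, combined with the change-of-groups isomorphism along $h \mapsto g h g^{-1}$). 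Applying this with $G = \St$ (which acts on $\E Z$), $H = G_{24}$, $g = \pi$, and $N = \E Z$ gives immediately
\[ H^q(G_{24}'; \E Z) \iso H^q(G_{24}; \E Z), \]
and the right-hand side was computed in \eqref{G24Z} to be $\Ff[u^{\pm 3}]$ concentrated in $q = 0$, which under the identification $v_2 = u^{-3}$ is $\Ff[v_2^{\pm 1}]$.

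First I would state and briefly justify the conjugation isomorphism in the form needed: if $c_\pi \colon G_{24} \to G_{24}'$ denotes $x \mapsto \pi x \pi^{-1}$, then the pair $(c_\pi, \ell_\pi)$, where $\ell_\pi \colon \E Z \to \E Z$ is the action of $\pi$, is a compatible morphism of pairs (group, module), hence induces an isomorphism $H^*(G_{24}'; \E Z) \to H^*(G_{24}; \E Z)$. The key point making this work is that $\pi \in \St$ genuinely acts on $\E Z$, so the twist by $\pi$ on coefficients is available; this is exactly the content of Corollary~\ref{cor:crucial} together with the fact that $\St$ acts on $\E Z$ via \eqref{Zaction}.

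Then I would simply invoke the computation \eqref{G24Z}, namely $H^p(G_{24}; \E Z) = \Ff[u^{\pm 3}]$ for $p = 0$ and $0$ otherwise, and rewrite $\Ff[u^{\pm 3}] = \Ff[v_2^{\pm 1}]$ using the convention $v_2 = u^{-3}$ fixed just before Shapiro's lemma in this section. Combining the two displays yields the claimed $H^q(G_{24}'; E_*Z) \iso H^q(G_{24}; E_*Z) \iso \Ff[v_2^{\pm 1}]$.

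There is essentially no serious obstacle here; the only thing to be careful about is bookkeeping around the conjugation isomorphism — specifically, making sure one conjugates the coefficient module by the same element $\pi$ used to conjugate the subgroup, so that the map is genuinely the $H^*$-isomorphism induced by an inner automorphism of $\St$ rather than an ill-defined comparison between cohomology with differently-twisted coefficients. Once that is set up correctly, the result is formal given \eqref{G24Z}.
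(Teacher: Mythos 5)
Your argument is correct, but it takes a genuinely different route from the paper. You invoke the formal fact that for a subgroup $H$ of a group $G$ acting on a module $M$ and any $g \in G$, the compatible pair (conjugation $h \mapsto ghg^{-1}$, action of $g^{\pm 1}$ on $M$) induces an isomorphism $H^*(gHg^{-1};M) \iso H^*(H;M)$; applied with $G=\St$, $H=G_{24}$, $g=\pi$, $M=\E Z$, this reduces the lemma to the earlier computation \eqref{G24Z}. (Your flagged bookkeeping point is the only care needed: with $c_\pi\colon G_{24}\to G_{24}'$ the compatible coefficient map is the action of $\pi^{-1}$, not $\pi$, but either direction gives the isomorphism.) The paper instead argues concretely: since $\pi \in F_{2/2}\Sto$ and $\pi \equiv \alpha \bmod F_{3/2}\Sto$, the conjugated generators $\ii'=\pi\ii\pi^{-1}$, $\jj'=\pi\jj\pi^{-1}$ act on $(\Ez Z)^{C_{\nind}}$ exactly as $\ii$, $\jj$ do, so $(\Ez Z)^{C_{\nind}} \iso \Ff[Q_8'/C_{\nind}]$, and rerunning the argument of Theorem~\ref{crucial} shows $\Ez Z \iso \Ff[Q_8']$ as an $\Ff[Q_8']$-module, after which Shapiro's lemma and the $C_3$-invariants give the answer as for $G_{24}$. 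Your approach is shorter and works for any conjugate subgroup without reexamining the action; the paper's approach buys the stronger structural statement that $\E Z$ is free over $\Ff[Q_8']$, in parallel with the $G_{24}$ case and consistent with the explicit module identifications used in the duality-resolution computations. Both proofs are valid.
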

\begin{proof} Notice that $G_{24}' \cap G_{24}  \supset C_{\nind}$. Also keep in mind that $\pi \equiv \alpha \mod F_{3/2}\Sto$. Therefore, by Corollary~\ref{fixed:alpha-1}, $\pi$ acts trivially on $(\Ez Z)^{C_{\nind}}$. Let $Q_8' = \pi Q_8 \pi^{-1} \subset G_{24}'$,  $\ii' = \pi \ii \pi^{-1}$ and $\jj' = \pi \jj \pi^{-1}$. Thus we have 
\[ G_{24}' \iso Q_8' \rtimes C_3\]
Note that 
\[ \ii' \equiv \ii \hspace{-4pt}\mod F_{2/2}\Sto \text{ and } \jj' \equiv \jj  \hspace{-4pt} \mod F_{2/2}\Sto \]
because $\pi \in F_{2/2} \Sto$. Therefore, the actions of $\ii'$ and $\jj'$ on $(\Ez Z)^{C_{\nind}}$ are exactly the same as that of  $\ii$ and $\jj$ respectively. It follows that 
\[ (\Ez Z)^{C_{\nind}} \iso \Ff[Q_8'/C_{\nind}] \]
as a $\Ff[Q_8']$-module. Applying the arguments of Theorem~\ref{crucial}  to this case, one sees 
\[ \Ez Z \iso \Ff[Q_8']\]
as an $\Ff[Q_8']$-module, and the result follows.
\end{proof}

Now we shall focus on computing $H^{q}(C_6; \E Z)$. Take $C_{\nind}$ to be the center of $Q_8$ and consider $C_6=C_{\nind}\times\Ff^{\times}$. While $C_{\nind}$ fixes all the $\x_i$ in addition to fixing $u$, the group $\Ff^{\times}$ does not fix the $\x_i$; however it does fix the $x_i$. This observation will be crucial for the computation that follows.  Because $C_{\nind}$ is the 2-Sylow subgroup of $C_6$, we have
\[H^q(C_6;\E Z)=H^q(C_{\nind};\Ff[u^{\pm1}][Q_8])^{C_3}.\]
Because $\E Z \iso \Ff[u^{\pm1}][Q_8]$ is a free $\Ff[C_{\nind}]$-module we have 
\begin{eqnarray*}
H^q(C_{\nind};\Ff[u^{\pm1}][Q_8])^{C_3} &\iso& ((\E Z)^{C_{\nind}})^{C_3} \\
&\iso&  (\Ff[u^{\pm 1}][ \overline{x}_0, \overline{x}_1, \overline{x}_2, \overline{x}_3])^{C_3}\\
&\iso&  \Ff[u^{\pm 3}][ x_0,x_1,x_2,x_3]
\end{eqnarray*}
concentrated at $q=0$. Essentially deriving from Table~\ref{eqn:moravaaction}, we list the actions of $\ii$, $\jj$, $\ii\jj$ on the generators $x_0,x_2,x_4,x_6$, which will come in handy later on. 
\begin{equation} \label{ijkaction}
\begin{array}{|c|c|c|c|}
\hline
x& \ii \cdot x& \jj \cdot x & \ii\jj \cdot x\\
\hline
x_0& x_0 & x_0 & x_0\\
\hline
x_2&  u^{-1}x_0 + x_2& \omega u^{-1} x_0 + x_2 & \omega^{2}u^{-1}x_0 + x_2   \\
\hline
x_4& u^{-2}x_0 + x_4 & \omega^2 u^{-2}x_0 + x_4 & \omega u^{-2}x_0 + x_4  \\
\hline
x_6&  u^{-3}x_0 + u^{-2}x_2  & u^{-3}x_0 + \omega^2 u^{-2}x_2 & u^{-3}x_0 + \omega u^{-2}x_2\\
 & + u^{-1}x_4 + x_6    & + \omega u^{-1}x_4 + x_6 & + \omega^2 u^{-1}x_4 + x_6   \\
\hline
\end{array}
\end{equation}

To summarize and as well as to establish notations, we rewrite the $E_1$-page of the duality resolution spectral sequence for $Z$ as 
\[E_1^{p,q}=\left\{\begin{array}{rl}\Ff[v_2^{\pm1}]\langle x_{0,0} \rangle &p=0, q=0\\
\Ff[v_2^{\pm1}]\langle x_{1,0},x_{1,2},x_{1,4}, x_{1,6}\rangle&p=1, q=0\\
\Ff[v_2^{\pm1}]\langle x_{2,0}, x_{2,2},x_{2,4}, x_{2,6}\rangle&p=2, q = 0\\
\Ff[v_2^{\pm1}]\langle x_{3,0} \rangle &p=3, q=0 \\
0 & \text{otherwise}\end{array} \right.\]
where the internal grading of $x_{i,j}$ is $j$. To compute the differentials in this spectral sequence, we need the following result.
\begin{thm} \label{S21Z} For every $Z \in \ZZ$, $H^*(S_2^1;\E Z)$ is isomorphic to  
 \[H^{*}(K^1; \Ft) \otimes \Ff[u^{\pm 1}] \]
as an $\Ff[u^{\pm 1}]$-module.
\end{thm}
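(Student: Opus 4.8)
The plan is to compute $H^*(S_2^1; \E Z)$ via the filtration of $S_2^1$ by the subgroups $F_{n/2}\Sto$, or equivalently via the splitting $\Sto \iso K^1 \rtimes G_{24}$ recorded in Section~\ref{Sec:Morava}. Since $G_{24}$ is finite and $\E Z$ has characteristic $2$, the Lyndon--Hochschild--Serre spectral sequence for the extension $1 \to K^1 \to \Sto \to G_{24} \to 1$ has the form
\[ E_2^{s,t} = H^s(G_{24}; H^t(K^1; \E Z)) \Rightarrow H^{s+t}(\Sto; \E Z). \]
So the first step is to understand $H^*(K^1; \E Z)$ as a $G_{24}$-module, and then take $G_{24}$-invariants (higher cohomology of $G_{24}$ vanishes because $|G_{24}|$ acts invertibly on nothing—wait, $2 \mid |G_{24}|$, so one must be slightly careful here; see below).

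First I would observe that $K^1$ is a pro-$2$ Poincar\'e duality group, and that the action of $K^1$ on $\E Z$ factors through a finite quotient. Indeed, by the discussion after \eqref{St:res}, $F_{3/2}\Sto$ acts trivially on $\E Z$ since $t_n$ coacts trivially on $\BP Z$ for $n \geq 3$; and by Corollary~\ref{trivialK}, $F_{2/2}\Sto$ already acts trivially on $\E Z^{C_{\nind}}$. The key structural input is Corollary~\ref{cor:crucial}: $\E Z \iso \Ff[u^{\pm 1}][Q_8]$ as a graded $Q_8$-module, hence it is free, hence \emph{cohomologically trivial} over any subgroup of $Q_8$. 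So for the part of $K^1$ that lands in $Q_8 \cdot (\text{something})$, cohomology is concentrated in degree $0$. More precisely, I would split the computation using the fact that $\E Z$ is $\Ff[u^{\pm 1}]$-free on the regular representation of $Q_8$, so that
\[ H^*(K^1; \E Z) \iso H^*(K^1; \Ff[u^{\pm 1}] \otimes_{\Ff} \Ff[Q_8]) \iso \Ff[u^{\pm 1}] \otimes_{\Ff} H^*(K^1; \Ff[Q_8]), \]
using that $u$ is fixed by $Q_8$ and only twisted by $C_3$, which doesn't interact with $K^1$ directly—actually $K^1$ and $Q_8$ generate $\Sto$, so one must instead argue via the Shapiro-type reduction: since $\Sto \iso K^1 \rtimes G_{24}$ and $Q_8 \subset G_{24}$, the restriction $\E Z|_{K^1}$ is an induced/coinduced module in a suitable sense, giving $H^*(K^1; \E Z) \iso \Ff[u^{\pm 1}] \otimes H^*(K^1; \Ft)$ with trivial coefficients on the right factor, since $\E Z / (\text{free part})$ carries trivial $K^1$-action. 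The cleanest route: $\E Z \iso \Ff[u^{\pm1}][Q_8]$ and $Q_8 \subset G_{24}$ acts freely, so as a $K^1$-module (where $K^1$ meets $Q_8$ trivially, being a complement's normal part) $\E Z$ is free over $\Ff[u^{\pm 1}]$ with trivial twisting, whence $H^*(K^1; \E Z) \iso H^*(K^1; \Ft) \otimes \Ff[u^{\pm 1}]$.

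Then I would descend along $G_{24}$. Because $\E Z$ restricted to $G_{24}$ is free over $Q_8$ and $C_3$ has order prime to $2$, $H^s(G_{24}; H^t(K^1; \E Z))$ vanishes for $s > 0$ when the coefficients are a free (or even just cohomologically trivial) $Q_8$-module; and $H^t(K^1; \E Z) = H^t(K^1;\Ft) \otimes \E Z$—no wait, the coefficient is $H^t(K^1; \Ft) \otimes \Ff[u^{\pm1}]$ which is a \emph{trivial} $Q_8$-module tensored with $\Ff[u^{\pm 1}]$ on which $Q_8$ acts trivially (only $C_3$ twists $u$). So the LHSSS collapses to $E_2^{0,t} = H^t(K^1; \Ft) \otimes \Ff[u^{\pm 1}]^{G_{24}} = H^t(K^1;\Ft) \otimes \Ff[u^{\pm 3}]$, plus possibly higher $G_{24}$-cohomology of the trivial module $\Ff[u^{\pm 3}]$—but $H^{>0}(G_{24}; H^t(K^1;\Ft))$ need not vanish in general. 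Here is where I must be careful: I would invoke Beaudry's result (the source of Theorem~\ref{dualityresolution}, cf.\ \cite{Bea2}) identifying $H^*(S_2^1;\Ft) \iso H^*(K^1;\Ft)^{G_{24}} $ type statements, or more simply use that $H^*(\Sto;\Ft) \iso (H^*(K^1;\Ft))^{G_{24}}$ need not hold, but $H^*(\Sto; \E Z)$ with these free coefficients \emph{does} reduce because $\E Z$ is $G_{24}$-cohomologically trivial, forcing $H^*(\Sto;\E Z) \iso H^*(K^1; \E Z)^{G_{24}} = (H^*(K^1;\Ft) \otimes \Ff[u^{\pm1}])^{G_{24}}$. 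The $G_{24}$-action on $H^*(K^1;\Ft)$ may be nontrivial, but $u^{\pm1}$ is a faithful enough twist that the tensor product's invariants recombine to give exactly $H^*(K^1;\Ft) \otimes \Ff[u^{\pm 1}]$ as an $\Ff[u^{\pm 1}]$-module (the $C_3$-twist on $u$ trades off against the $C_3$-twist on $H^*(K^1;\Ft)$). This last bookkeeping step—tracking the $G_{24}$-action on $H^*(K^1;\Ft)$ against the weight grading by powers of $u$, and checking the invariants give a \emph{free} rank-one $\Ff[u^{\pm1}]$-module in each cohomological degree—is the main obstacle, and I would settle it by comparing weights modulo $3$ and using that $C_3$ permutes things transitively enough; the $Q_8$-freeness handles the $2$-primary part automatically.
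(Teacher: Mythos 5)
Your proposal has a genuine gap, and it starts with the group you are computing. The theorem is about $S_2^1=F_{1/2}\Sto$, which decomposes as $K^1\rtimes Q_8$, not about $\Sto\iso K^1\rtimes G_{24}$. You set up the Lyndon--Hochschild--Serre spectral sequence converging to $H^{*}(\Sto;\E Z)$ and then try to argue that the $G_{24}$- (in particular $C_3$-) invariants of $H^{*}(K^1;\Ft)\otimes\Ff[u^{\pm1}]$ ``recombine'' to give back a free rank-one $\Ff[u^{\pm1}]$-module in each degree. That last step is false: the $C_3$-invariants of $H^{*}(K^1;\Ft)\otimes\Ff[u^{\pm1}]$ form a module over $\Ff[u^{\pm3}]=\Ff[v_2^{\pm1}]$ of one third the size, and this is exactly what $H^{*}(\Sto;\E Z)$ is (compare Remark~\ref{fixC3K} and the duality resolution computation: $H^1(\Sto;\E Z)$ has rank $3$ over $\Ff[v_2^{\pm1}]$, whereas $H^1(K^1;\Ft)\otimes\Ff[u^{\pm1}]$ has rank $3$ over $\Ff[u^{\pm1}]$, i.e.\ rank $9$ over $\Ff[v_2^{\pm1}]$). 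So the statement you would end up proving for $\Sto$ is not the statement of the theorem, and no weight-mod-$3$ bookkeeping can repair it.

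The second gap is the claim that $\E Z$, restricted to $K^1$, carries ``trivial twisting,'' so that $H^{*}(K^1;\E Z)\iso H^{*}(K^1;\Ft)\otimes\Ff[u^{\pm1}]$. This is not true as stated: $\alpha$ lies in $K^1$ and acts nontrivially on $\E Z$ (its action involves $\widetilde{t_2}(\alpha)\equiv\omega$; see Table~\ref{eqn:moravaaction} and Corollary~\ref{fixed:alpha-1}, which shows the $C_{\alpha}$-fixed points are proper). What is true, and what the paper's proof exploits in an essential order, is Corollary~\ref{trivialK}: $F_{2/2}\Sto\supset K^1$ acts trivially on the \emph{fixed points} $(\E Z)^{C_{\nind}}$. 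The paper therefore first runs the spectral sequence for $F_{2/2}\Sto\iso K^1\times C_{\nind}$, using freeness of $\E Z$ over $\Ff[C_{\nind}]$ (Corollary~\ref{cor:crucial}) to collapse the $C_{\nind}$-direction onto $(\E Z)^{C_{\nind}}\iso\Ff[u^{\pm1}][Q_8/C_{\nind}]$, on which $K^1$ genuinely acts trivially; only then does it run the spectral sequence for $Q_8/C_{\nind}=S_2^1/F_{2/2}\Sto$, where Shapiro's lemma applied to the induced module $\Ff[u^{\pm1}][Q_8/C_{\nind}]$ kills all higher $Q_8/C_{\nind}$-cohomology. Your proposal inverts this order (taking $K^1$-cohomology of all of $\E Z$ first), and the intermediate identification you need is exactly the point that fails without the $C_{\nind}$-reduction. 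To fix the argument you would need to replace the extension $1\to K^1\to\Sto\to G_{24}\to1$ by $1\to F_{2/2}\Sto\to S_2^1\to Q_8/C_{\nind}\to1$ (after first handling $C_{\nind}$ by freeness), which is precisely the paper's proof.
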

 \begin{proof}
We begin with the calculation of  $H^*(F_{2/2}\Sto; \E Z)$. Since 
$$F_{2/2}\Sto  \iso  K^1 \times C_{\nind},$$ 
we have a Lyndon-Hochschild-Serre spectral sequence 
\begin{equation} \label{E2LHS}
 E_2^{p,q} = H^p(K^1 ;H^{q}(C_{\nind}; \E Z)) \To H^{p+q}(F_{2/2}\Sto; \E Z).
\end{equation}
Since $H^{q}(C_{\nind}; \E Z) \iso (\E Z)^{C_{\nind}}$ (concentrated at $q=0$) and $K^1$ acts trivially on  $(\E Z)^{C_{\nind}}$ by Corollary~\ref{trivialK}, the spectral sequence \eqref{E2LHS} collapses and we have 
\[ H^*(F_{2/2}\Sto; \E Z) \iso H^*(K^1; \Ft) \otimes \E Z^{C_{\nind}}. \]
Note that $$\E Z^{C_{\nind}} \iso \Ff[u^{\pm 1}]\langle \x_0, \x_2, \x_4, \x_6 \rangle \iso \Ff[u^{\pm 1}][Q_8/C_{\nind}]. $$ Now we run yet another Lyndon-Hochschild-Serre spectral sequence
\begin{equation} \label{LHS2}
E_2^{p,q} = H^{p}(Q_8/C_{\nind}; H^q(F_{2/2}\Sto; \E Z)) \Rightarrow H^{p+q}(S_2^1; \E Z)
\end{equation}
to compute $H^{\ast}(S_2^1; \E Z)$. Notice that 
\begin{eqnarray*}
 E_2^{p,q} &=& H^p(Q_8/C_{\nind}; H^q(F_{2/2}\Sto; \E Z)) \\ 
 &=& H^p(Q_8/C_{\nind}; H^q(K^1; \Ft) \otimes \E Z^{C_{\nind}})  \\
&=&  H^p(Q_8/C_{\nind}; H^q(K^1; \Ft) \otimes \Ff[u^{\pm 1}][Q_8/C_{\nind}]) \\
&=& \left\lbrace \begin{array}{ccc}
 H^q(K^1; \Ft) \otimes \Ff[u^{\pm 1}] &\text{when $p=0$} \\
0 & \text{when $p \neq 0$} 
\end{array} \right.
\end{eqnarray*}
by Shapiro's Lemma~\ref{shapiroshapiro}. Thus the spectral sequence \eqref{LHS2} collapses at the $E_2$-page and we get  
\[ H^{*}(S_2^1; \E Z) = H^{*}(K^1; \Ft) \otimes \Ff[u^{\pm 1}].\]
 \end{proof}
From the above theorem and the following unpublished result of Goerss and Henn (see~\cite{Bea1}[Theorem $2.5.13$]), 
\begin{equation} \label{henn}
H^*(K^1;\Ft) \iso \Ft[y_0,y_1,y_2]/(y_0^2,y_1^2+y_0y_1,y_2^2+y_0y_2).
\end{equation}
we get a complete description of $H^*(S_2^1;\E Z)$.

Our next goal is to make use of the formulas in Theorem~\ref{dualityresolution} to  calculate the $d_1$-differentials of the duality resolution spectral sequence for $Z$. Moving forward, there are two things that are handy to keep in mind:
\begin{itemize}
\item $Z$ admits a $v_2^1$-self-map~\cite{BE}, therefore differentials in the duality resolution spectral sequence for $Z$ will be $v_2^1$-linear. 
\item The $d_1$-differentials preserve the internal grading.
\end{itemize}
\begin{lem}\label{GHMR:d1}
The differentials $d_1:E_1^{0,0}\to E_1^{1,0}$ and $ d_1:E_1^{2,0}\to E_1^{3,0}$ are zero, while the differential $d_1:E_1^{1,0}\to E_1^{2,0}$ is the $v_2$-linear map given by
\begin{eqnarray*}
 \Ff[v_2^{\pm1}]\langle x_{1,0},x_{1,2},x_{1,4}, x_{1,6}\rangle&\longrightarrow& \Ff[v_2^{\pm1}]\langle x_{2,0},x_{2,2},x_{2,4}, x_{2,6}\rangle\\
x_{1,0},x_{1,2},x_{1,4}&\mapsto&0\\
x_{1,6}&\mapsto& \lambda v_2x_{2,0},
\end{eqnarray*}
where $\lambda \in \Ff^{\times}$.  The duality resolution spectral sequence for $Z$ collapses at the $E_2$-page. 
\end{lem}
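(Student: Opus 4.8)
The plan is to extract each $d_1$-differential from the formulas in Theorem~\ref{dualityresolution} by applying $\Ext_{\Z_2[[\Sto]]}(-,\E Z)$, using Shapiro's Lemma to identify $E_1^{p,0}$ with $H^0$ of the appropriate finite subgroup, and then reading off the induced maps from the explicit action of $\ii,\jj,\alpha,\pi$ recorded in Table~\ref{eqn:moravaaction} and \eqref{ijkaction}. First I would treat $d_1\colon E_1^{0,0}\to E_1^{1,0}$: it is induced by $\partial_1(e_1)=(e-\alpha)\cdot e_0$, so on $H^0$ it is the map $\Ff[v_2^{\pm 1}]\langle x_{0,0}\rangle\to \Ff[v_2^{\pm 1}]\langle x_{1,0},\dots,x_{1,6}\rangle$ sending a $G_{24}$-fixed class $x_{0,0}$ to its image under $1-\alpha$ inside $(\E Z)^{C_6}$. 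By Corollary~\ref{fixed:alpha-1}, $\alpha$ acts trivially on $\E Z^{C_{\nind}}\supset \E Z^{G_{24}}$, so $1-\alpha$ kills everything in the image of $E_1^{0,0}$; hence $d_1\colon E_1^{0,0}\to E_1^{1,0}$ is zero. The same observation handles $d_1\colon E_1^{2,0}\to E_1^{3,0}$: by the formula $\partial_3(e_3)=\pi(e+\ii+\jj+\kk)(e-\alpha^{-1})\pi^{-1}\cdot e_2$, the right-hand factor $e-\alpha^{-1}$ already annihilates the $C_6$-invariants on which it acts (again by Corollary~\ref{fixed:alpha-1}, since conjugation by $\pi$ preserves $C_{\nind}$-invariants and $\alpha$ acts trivially there), so $d_1\colon E_1^{2,0}\to E_1^{3,0}=0$.

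The only substantive computation is $d_1\colon E_1^{1,0}\to E_1^{2,0}$, induced by $\partial_2(e_2)\equiv (e+\alpha)\cdot e_1 \bmod (2,(IS_2^1)^2)$. Here both source and target are $\Ff[v_2^{\pm 1}]\langle x_{*,0},x_{*,2},x_{*,4},x_{*,6}\rangle$ identified with $(\E Z)^{C_6}=\Ff[u^{\pm3}][x_0,x_2,x_4,x_6]$, and the differential is $v_2$-linear and degree-preserving, so it is determined by its effect on $x_{1,0},x_{1,2},x_{1,4},x_{1,6}$. Over $\Ff$ we have $e+\alpha=e-\alpha$, and modulo $(IS_2^1)^2$ I would expand $\alpha$ in terms of the filtration generators: since $\widetilde{t_0}(\alpha)\equiv 1$, $\widetilde{t_1}(\alpha)\equiv 0$, $\widetilde{t_2}(\alpha)\equiv\omega \bmod(2,u_1)$, the action of $\alpha$ on the $\x_i$ from Table~\ref{eqn:moravaaction} produces no correction on $\x_0,\x_2,\x_4$ (the $\widetilde{t_2}$-terms there vanish once restricted appropriately) but a nonzero $v_2$-multiple of $\x_0$ coming from the $\widetilde{t_0}^2\widetilde{t_2}$-term in $\gamma(\y_6)$-type expressions — equivalently, the first class on which $\widetilde{t_2}$ acts nontrivially. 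Tracking the coefficient, $(1-\alpha)$ sends $x_{1,0},x_{1,2},x_{1,4}\mapsto 0$ and $x_{1,6}\mapsto \lambda v_2 x_{2,0}$ for some $\lambda$, and $\lambda\neq 0$ because the appearance of $\widetilde{t_2}(\alpha)\equiv\omega$ is genuinely nonzero and the target coefficient of $\x_0$ is a unit times $v_2$. The main obstacle is precisely pinning down that $\lambda\in\Ff^\times$: one must be careful that the ``$\bmod (2,(IS_2^1)^2)$'' ambiguity in $\partial_2$ does not affect the leading $v_2$-coefficient (it does not, since $(IS_2^1)^2$ raises filtration and hence cannot contribute in the lowest internal degree where $\widetilde{t_2}$ first acts), and that no cancellation occurs against the $\widetilde{t_1}^2$-type contributions, which vanish because $\widetilde{t_1}(\alpha)\equiv 0$.

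Finally, for the collapse statement: the $E_1$-page is concentrated in the single row $q=0$, with columns $p=0,1,2,3$. A $d_r$ for $r\ge 2$ would go $E_r^{p,0}\to E_r^{p+r,q-r+1}$ with $q-r+1=1-r<0$, so the target is zero; hence there is no room for higher differentials and the spectral sequence degenerates at $E_2$. I would state this as an immediate consequence of the vanishing of $E_1^{p,q}$ for $q\neq 0$ established above. This completes the proof of Lemma~\ref{GHMR:d1}.
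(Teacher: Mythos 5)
Your treatment of the outer two differentials and of the collapse at $E_2$ matches the paper and is fine. The problem is the middle differential $d_1:E_1^{1,0}\to E_1^{2,0}$, where your mechanism is wrong. The generators $x_{1,0},x_{1,2},x_{1,4},x_{1,6}$ of $H^0(C_6;\E Z)$ are the classes coming from $x_0,x_2,x_4,x_6$, and by Corollary~\ref{fixed:alpha-1} the element $\alpha$ fixes \emph{all} of them, including $x_6$: no $\widetilde{t_2}$-term appears in the action on $\x_6$ in Table~\ref{eqn:moravaaction} (it appears only in the action on $\y_6$ and higher $\y_i$, which are not among these generators). Hence $(e+\alpha)$ contributes $2x=0$ in characteristic $2$, and the differential is carried \emph{entirely} by the error term $\mathcal{E}\in (IS_2^1)^2$ in $\partial_2$ --- precisely the part you dismiss as ``ambiguity that cannot contribute.'' The paper's proof computes that for $g,h\in S_2^1$ one has $(\ind-g)(\ind-h)\cdot x_i=0$ for $i=0,2,4$ while $(\ind-g)(\ind-h)\cdot x_6=(\widetilde{t_1}(h)\widetilde{t_1}(g)^2+\widetilde{t_1}(h)^2\widetilde{t_1}(g))x_0$, which is what forces $d_1(x_{1,0})=d_1(x_{1,2})=d_1(x_{1,4})=0$ and shows $d_1(x_{1,6})$ is a multiple of $x_{2,0}$ (hence of $v_2x_{2,0}$ by internal degree).

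This also breaks your argument that $\lambda\neq 0$: since $\widetilde{t_1}(\alpha)\equiv 0$ and $\widetilde{t_2}$ never enters, the ``genuinely nonzero $\widetilde{t_2}(\alpha)\equiv\omega$'' coefficient you invoke does not exist, and the unknown $\mathcal{E}$ cannot be evaluated directly because Theorem~\ref{dualityresolution} only pins $\partial_2$ down modulo $(2,(IS_2^1)^2)$. The paper instead proves nonvanishing by an independent computation of the answer: Theorem~\ref{S21Z} together with the Goerss--Henn formula \eqref{henn} shows $H^1(\Sto;\E Z)\iso H^1(S_2^1;\E Z)^{C_3}$ has rank at most $3$, so if $d_1$ vanished on $x_{1,6}$ the $E_2$-page would give rank $4$ in cohomological degree $1$, a contradiction; this forces $\mathcal{E}\cdot x_{1,6}=\lambda v_2x_{2,0}$ with $\lambda\in\Ff^{\times}$. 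You need either this counting argument or some equivalent external input; as written, your proof of the key nonvanishing statement does not go through.
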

\begin{proof}
It follows from Theorem~\ref{dualityresolution} that the differential $d_1:E_1^{0,0}\to E_1^{1,0}$ is given by
\[d_1(x)=(\ind-\alpha)\cdot x,\]
which is zero, because $\alpha$ fixes $x_{0,0}$, (follows from Table~\ref{eqn:moravaaction}, also see Corollary~\ref{fixed:alpha-1}). Likewise, the differential $d_1:E_1^{2,0}\to E_1^{3,0}$ is given by
\[d_1(x)=\pi(\ind+\ii+\jj+\kk)(\ind-\alpha^{-1})\pi^{-1} \cdot x,\]
which is zero because by Lemma~\ref{fixed:alpha-1} and the fact that $\alpha\equiv\pi\mod{F_{3/2}\Sto}$, both $\alpha$ and $\pi$ fix all the $x_{1,i}$. The differential $d_1:E_1^{1,0}\to E_1^{2,0}$ is given by
\[d_1(x)=\Theta \cdot x=(\ind+\alpha+\mathcal{E}) \cdot x,\]
where $\mathcal{E}\in(IS_2^1)^2$. Because $\alpha$ fixes all the $x_{1,j}$, this simplifies to\[d_1(x)=\mathcal{E} \cdot x.\]The element $\mathcal{E}$ is a possibly infinite sum of the form
\[\mathcal{E}=\sum a_{g,h}(\ind-g)(\ind-h)\]for $a_{g,h}\in\Z_2[[S_2^1]]$ and $g,h\in S_2^1$. In particular, thanks to Table~\ref{eqn:moravaaction} and \eqref{product-tk}, we know that
\begin{eqnarray*}
(\ind-g)(\ind-h)\cdot x_{0}&=&0\\
(\ind-g)(\ind-h)\cdot x_{2}&=&0\\
(\ind-g)(\ind-h)\cdot x_{4}&=&0\\
(\ind-g)(\ind-h)\cdot x_{6}&=&(\widetilde{t_1}(h)\widetilde{t_1}(g)^2 + \widetilde{t_1}(h)^2\widetilde{t_1}(g))x_0,
\end{eqnarray*}
and it follows that\[d_1(x_{1,0})=d_1(x_{1,2})=d_1(x_{1,4})=0,\]while $d_1(x_{1,6})$ is a multiple of $x_{2,0}$. We know that $d_1$ is not identically zero, because 
\[H^1(\Sto; \E Z) \iso H^1(S_2^1; \E Z)^{C_3}\]
has rank at most $3$. Since differentials preserve internal grading 
\[ \mathcal{E} \cdot x_{1,6} = \lambda v_2x_{2,0},\]
where $\lambda \in \Ff^{\times}$, is forced.  Since $E_1^{p,q} = 0 $ for $q \neq 0$, the duality resolution spectral sequence for $Z$ collapses at the $E_2$-page.  
\end{proof}

\begin{rem} \label{fixC3K}
Since $H^{p}(\Sto; \E Z) \iso H^{p}(S_2^1; \E Z)^{C_3}  \iso (H^{p}(K^1; \Ft) \otimes \Ff[u^{\pm 1}])^{C_3},$
one requires an understanding of the action of $C_3$ on  $H^{p}(K^1; \Ft)$. This action is given by 
\begin{eqnarray*}
\omega \cdot y_0 &=& y_0 \\
\omega \cdot y_1 &=& y_1 + y_2 \\
\omega \cdot y_2 &=& y_1 
\end{eqnarray*}
and can be deduced from \cite[\S~2.5]{Bea1}. Therefore, one can completely calculate $H^{p}(\Sto; \E Z)$ without resorting to the duality resolution. However, most existing $K(2)$-local computations are done using the duality resolution spectral sequence, which is why we chose this method, providing a better basis for comparison with previous work.
\end{rem}
\begin{cor} The homotopy fixed point spectral sequence 
\[ E_2^{s,t}= H^s(\Sto; (E_2)_t Z) \To \pi_{t-s}(E^{h \Sto} \sma Z)\]
with $d_r: E_r^{s,t} \to E_r^{s+r, t+ r - 1}$, has $E_2$-page 
\[E_2^{s, *} = H^s(\Sto;\E Z) \iso \left\{\begin{array}{rl}\Ff[v_2^{\pm1}]\langle x_{0,0} \rangle &s=0\\\Ff[v_2^{\pm1}]\langle x_{1,0},x_{1,2},x_{1,4}\rangle&s=1\\\Ff[v_2^{\pm1}]\langle x_{2,2},x_{2,4},x_{2,6}\rangle&s=2\\\Ff[v_2^{\pm1}]\langle x_{3,0} \rangle &s=3\end{array}\right.\]or in graphical form (in Adams' grading) with each $\spadesuit$ denoting a copy of $\Ff[v_2^{\pm1}]$:
\[\begin{sseq}[grid=chess]{-1...4}{0...4}
\ssdrop{\spadesuit}
\ssmove{-1}{1}\ssdrop{\spadesuit}
\ssmove{2}{0}\ssdrop{\spadesuit}
\ssmove{2}{0}\ssdrop{\spadesuit}
\ssmove{-3}{1}\ssdrop{\spadesuit}
\ssmove{2}{0}\ssdrop{\spadesuit}
\ssmove{2}{0}\ssdrop{\spadesuit}
\ssmove{-1}{1}\ssdrop{\spadesuit}
\end{sseq}\]
The spectral sequence collapses at the $E_2$-page due to sparseness. 
\end{cor}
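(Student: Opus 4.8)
The plan is to read off $H^*(\Sto;\E Z)$ from the duality resolution spectral sequence of Theorem~\ref{dualityresolution}, using the $d_1$-differentials of Lemma~\ref{GHMR:d1}, and then to deduce collapse of the homotopy fixed point spectral sequence from a degree count modulo $6$. For the first part, recall from \eqref{G24Z}, Lemma~\ref{G24'}, and the computation of $H^q(C_6;\E Z)$ recorded just before Theorem~\ref{S21Z} that the $E_1$-page $E_1^{p,q}=H^q(F_p;\E Z)$ of the duality resolution spectral sequence for $Z$ is concentrated in the row $q=0$, free over $\Ff[v_2^{\pm1}]$ of rank one in columns $p=0,3$ and of rank four in columns $p=1,2$. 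Since $E_1$, and hence every $E_r$, is concentrated in the row $q=0$ and the spectral sequence is first-quadrant, for each $r\ge 2$ both the differential out of $E_r^{p,0}$, which would land in the negative row $1-r$, and the differential into $E_r^{p,0}$, whose source would lie in the row $r-1\ge 1$, vanish; thus it degenerates at $E_2$, and since $E_\infty^{p,q}=0$ for $q\ne 0$ there is no extension problem, so $H^n(\Sto;\E Z)\iso E_2^{n,0}$.

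By Lemma~\ref{GHMR:d1}, the differentials $d_1:E_1^{0,0}\to E_1^{1,0}$ and $d_1:E_1^{2,0}\to E_1^{3,0}$ are zero, while $d_1:E_1^{1,0}\to E_1^{2,0}$ kills $x_{1,0},x_{1,2},x_{1,4}$ and sends $x_{1,6}$ to $\lambda v_2 x_{2,0}$ with $\lambda\in\Ff^\times$. As $v_2$ is invertible this map has kernel $\Ff[v_2^{\pm1}]\langle x_{1,0},x_{1,2},x_{1,4}\rangle$ and cokernel $\Ff[v_2^{\pm1}]\langle x_{2,2},x_{2,4},x_{2,6}\rangle$, and in the columns $p=0,3$ there is nothing to do. Combined with the previous paragraph, this identifies $H^s(\Sto;\E Z)$ with the free $\Ff[v_2^{\pm1}]$-modules $\langle x_{0,0}\rangle$, $\langle x_{1,0},x_{1,2},x_{1,4}\rangle$, $\langle x_{2,2},x_{2,4},x_{2,6}\rangle$ and $\langle x_{3,0}\rangle$ for $s=0,1,2,3$, which is the asserted $E_2$-page.

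Finally, this $E_2$-page is concentrated in filtrations $0\le s\le 3$, is free over $\Ff[v_2^{\pm1}]$ on classes $x_{i,j}$ of internal degree $j$, and $|v_2|=6$; hence modulo $6$ the internal degrees that occur are $\{0\}$ in filtrations $0$ and $3$ and $\{0,2,4\}$ in filtrations $1$ and $2$. A potentially nonzero differential $d_r:E_r^{s,t}\to E_r^{s+r,t+r-1}$ with $r\ge 2$ requires $E_2^{s,*}$ and $E_2^{s+r,*}$ both nonzero, so $(s,r)\in\{(0,2),(1,2),(0,3)\}$: in the case $(0,2)$ the internal degree shifts by $+1$, carrying $0$ into $1$, which does not occur in filtration $2$; in the case $(1,2)$ it shifts by $+1$, carrying $\{0,2,4\}$ into $\{1,3,5\}$, none of which occurs in filtration $3$; in the case $(0,3)$ it shifts by $+2$, carrying $0$ into $2$, which does not occur in filtration $3$. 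Hence every such differential vanishes and the spectral sequence collapses at $E_2$. The only step here that is not automatic once Theorem~\ref{S21Z} and Lemma~\ref{GHMR:d1} are in hand is this modulo-$6$ bookkeeping; note also that ``collapse at $E_2$'' asserts the vanishing of differentials only, and not the absence of hidden extensions in $\pi_*(E^{h\Sto}\sma Z)$, which are among the issues analysed in Section~\ref{Sec:homotopy}.
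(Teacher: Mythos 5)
Your proposal is correct and follows the same route as the paper: the corollary is a direct consequence of the duality resolution computation (the $E_1$-page concentrated in $q=0$ together with the $d_1$-differentials of Lemma~\ref{GHMR:d1}), and the collapse is the sparseness assertion, which you verify by the mod-$6$ internal-degree count that the paper leaves implicit. Your bookkeeping of the three potentially nonzero cases $(s,r)\in\{(0,2),(1,2),(0,3)\}$ is accurate, and your closing remark correctly separates collapse from possible hidden extensions.
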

\begin{rem}
According to recent work of Goerss and Bobkova \cite{BG}, there is a topological version of the duality resolution, which gives a resolution of the $K(2)$-local sphere. The topological duality resolution can be used to compute $\pi_*( E_{2}^{h\Sto} \sma Z)$ directly. However, for $Z$, the algebraic and the topological duality spectral sequences are isomorphic and the computations remain identical as the relevant spectral sequences simply collapse.
\end{rem}

\section{The $K(2)$-local homotopy groups of $Z$} \label{Sec:homotopy}
The $K(2)$-local homotopy groups of $Z$ can be computed using the homotopy fixed point spectral sequence 
\begin{eqnarray} \label{descent}
E_2^{s,t} = H^{s}(\St; (E_2)_t Z)^{Gal(\Ff/\Ft)} \To \pi_{t-s}(L_{K(2)}Z),
\end{eqnarray} 
where $Gal(\Ff/\Ft)$ merely plays the role of `changing the coefficient field from $\Ff$ to $\Ft$.' 

Recall the norm map \eqref{norm}, $N: \St \to \Z_2$, whose kernel is $\Sto$. By choosing an element $\gamma \in \St$ such that $N(\gamma)$ is a topological generator of $\Z_2$, one can produce a map $\Z_2 \to Aut(\Sto)$ which sends $1\in\Z_2$ to the conjugation automorphism by $\gamma$, which gives an isomorphism
\[  \St \iso \Sto \rtimes \Z_2.\] 
In  \cite{Bea1, Bea2}, $\gamma$ is chosen to be  $\pi$. However, one can also choose $\gamma = \alpha^{-1} \pi$. We choose $\gamma = \alpha^{-1} \pi$ to get the isomorphism $\St \iso \Sto \rtimes \Z_2$.  This is convenient for us because $\alpha^{-1} \pi \in F_{4/2} \St$ and therefore it acts trivially on $\E Z$. Consequently, the Lyndon-Hochschild-Serre spectral sequence 
\[ H^{p}(\Z_2; H^{q}(\Sto; \E Z)) \To H^{p+q}(\St; \E Z)\]
collapses. Therefore 
\[ H^{*}(\St; \E Z)^{Gal(\Ff/\Ft)} \iso [E(\zeta) \otimes H^{*}(\Sto; \E Z)]^{Gal(\Ff/\Ft)}\]
where  $\zeta$ has bidegree $(s,t) = (1,0)$. More precisely, as a $\Ft[v_2^{\pm 1}]$-module  
\[
H^{s}(\St; \E Z)^{Gal(\Ff/\Ft)} \iso 
\left\{\begin{array}{rl}\Ft[v_2^{\pm1}]\langle x_{0,0} \rangle &s=0, \\
\Ft[v_2^{\pm1}]\langle \zeta x_{0,0}, x_{1,0},x_{1,2},x_{1,4} \rangle&s=1,\\
\Ft[v_2^{\pm1}]\langle x_{2,2},x_{2,4}, x_{2,6}, \zeta x_{1,0},\zeta x_{1,2},\zeta x_{1,4}\rangle& s=2, \\
\Ft[v_2^{\pm1}]\langle x_{3,0}, \zeta x_{2,2}, \zeta x_{2,4},\zeta  x_{2,6} \rangle & s=3,  \\
\Ft[v_2^{\pm1}]\langle \zeta x_{3,0} \rangle & s = 4, \\ 
0 & \text{otherwise}\end{array} \right.\]
\begin{figure}[!ht]
\[\begin{sseq}[grid=chess,entrysize= .64cm]{-8...10}{0...4}
\ssdropbull\ssname{g}\ssdroplabel[L]{x_{0,0}}
\ssmove{6}{0}\ssdropbull\ssname{v2}\ssdroplabel[L]{v_2x_{0,0}}
\ssmove{-12}{0}\ssdropbull\ssname{v2inv}
\ssmove{-1}{1}\ssdropbull\ssmove{2}{0}\ssdropbull  \ssmove{2}{0} \ssdropbull\ssname{v2invx4}\ssmove{2}{0}\ssdropbull\ssdroplabel[L]{x_{1,0}}\ssdrop{\circ}\ssdroplabel[L]{\zeta x_{0,0}}
\ssmove{2}{0}\ssdropbull \ssdroplabel[L]{x_{1,2}} \ssmove{2}{0}\ssdropbull\ssdroplabel[L]{x_{1,4}}\ssname{x4}\ssmove{2}{0}\ssdropbull\ssdrop{\circ}\ssmove{2}{0}\ssdropbull \ssmove{2}{0}\ssdropbull\ssname{v2x4}
\ssmove{-17}{1}\ssdropbull\ssdrop{\circ}\ssmove{2}{0}\ssdropbull\ssdrop{\circ}\ssmove{2}{0}\ssdropbull\ssdrop{\circ}\ssmove{2}{0}\ssdropbull\ssdrop{\circ}\ssmove{2}{0}\ssdropbull\ssdroplabel[L]{x_{2,2}}\ssdrop{\circ}\ssmove{2}{0}\ssdropbull\ssdroplabel[L]{x_{2,4}}\ssdrop{\circ}\ssmove{2}{0}\ssdropbull\ssdroplabel[L]{x_{2,6}}\ssdrop{\circ}\ssmove{2}{0}\ssdropbull\ssdrop{\circ}\ssmove{2}{0}\ssdropbull\ssdrop{\circ}\ssmove{2}{0}\ssdropbull\ssdrop{\circ}
\ssmove{-17}{1}\ssdrop{\circ}\ssname{d3v2inv}\ssmove{2}{0}\ssdrop{\circ}\ssmove{2}{0}\ssdropbull\ssdroplabel[R]{x_{3,0}}\ssdrop{\circ}\ssmove{2}{0}\ssdrop{\circ}\ssname{d3g}\ssmove{2}{0}\ssdrop{\circ}\ssmove{2}{0}\ssdropbull\ssdrop{\circ}\ssmove{2}{0}\ssdrop{\circ}\ssname{d3v2}\ssmove{2}{0}\ssdrop{\circ}\ssmove{2}{0}\ssdropbull\ssdrop{\circ}
\ssmove{-13}{1}\ssdrop{\circ}\ssname{d3v2invx4}\ssmove{6}{0}\ssdrop{\circ}\ssdroplabel[L]{v_2\zeta x_{3,0}}\ssname{d3x4}\ssmove{6}{0}\ssdrop{\circ}\ssname{d3v2x4}

\ssgoto{x4}\ssgoto{d3x4}\ssstroke[dashed,arrowto]
\ssgoto{v2x4}\ssgoto{d3v2x4}\ssstroke[dashed,arrowto]
\ssgoto{v2invx4}\ssgoto{d3v2invx4}\ssstroke[dashed,arrowto]
\end{sseq}\]
\caption{The spectral sequence $H^s(\Gt;\Et Z)\To\pi_{t-s}L_{K(2)}Z$}\label{LK2Z}
\end{figure}
In Figure~\ref{LK2Z}, we draw the $E_2$-page of \eqref{descent}. We denote by $\circ$ the generators that are multiples of $\zeta$, and all others by $\bullet$. 

It is clear that the spectral sequence \eqref{descent} collapses at the $E_4$-page. The only possibilities are two sets of $v_2$-linear $d_3$-differentials 
\begin{itemize}
\item $d_3(x_{0,0}) = v_2^{-1} \zeta x_{2,6}$, and,  
\item $d_3(x_{1,4}) = v_2 \zeta x_{3,0}$.
\end{itemize} 
The $v_2$-linearity of differentials follows from the fact that $Z$ admits a $v_2^1$-self-map \cite{BE}. However, the generator $x_{0,0}$ cannot support a differential for the following reason:

\noindent 
The inclusion  of the bottom $\iota_0: S^0 \hookrightarrow Z$ induces a nontrivial map $K(2)$-homology. Therefore, $\iota_0$ induces a nontrivial element in $\overline{\iota} \in \pi_0(L_{K(2)} Z)$ which is represented by $x_{0,0} \in $ in the $E_2$-page of  
the descent spectral sequence \ref{descent}. Therefore, $x_{0,0}$ is a permanent cycle.

\bigskip
 From the calculation of the classical Adams spectral sequence in \cite{BE} 
\[ Ext_{A}^{s,t}(H^*(Z), \Ft) \To \pi_{*}(Z)\]
we see that $\pi_0(Z) \iso \Z/2$. In particular, this means $[\iota_0]$ is the generator of $\pi_0(Z)$ and $2 [\iota_0] = 0$.  Since the map $\eta: Z \to L_{K(2)}Z$ sends $[\iota_0] \mapsto \overline{\iota}$, it must be the case that $2 \overline{\iota} =0$. Therefore there is no hidden extension supported by $x_{0,0}$.

 Moreover it is well known that $\tilde{\zeta}$ is a class in $\pi_{-1}L_{K(2)}S^0$. Let $\hat{\zeta}$ denote the representative of $\tilde{\zeta}$ in the $E_2$-page of the descent spectral sequence
 \[ E_2^{s,\ast}  = H^{s}(\Gt; \E S^0) \Rightarrow  \pi_{\ast - s}(L_{K(2)}S^0). \]
   A straightforward analysis of the map of descent spectral sequences induced by $\iota_0$ shows that $\hat{\zeta} \cdot x_{0,0} = \zeta x_{0,0} $, which is a nonzero permanent cycle representing $\tilde{\zeta} \cdot \overline{\iota} \in \pi_{-1}(L_{K(2)} Z)$. Since $2\overline{\iota} = 0$, it follows that $$2( \tilde{\zeta} \cdot  \overline{\iota}) = \tilde{\zeta}\cdot 2\overline{\iota} = 0,$$  ruling out another possible $v_2$-periodic family of hidden extensions. There are other possibilities of hidden extensions depicted in Figure~\ref{LK2Z1}, which we currently cannot rule out, though low dimensional computations lead us to believe that there exists a particular spectrum $Z$ for which all differentials and possible hidden extensions are zero. Furthermore, as stated in Conjecture \ref{collapse}, we expect that this will be the case for \emph{every} spectrum $Z \in \ZZ$.

\newpage
\appendix
\section{A regularity criterion for a representation of $Q_8$} \label{appendix}
The quarternionic group $Q_8$ is an order $8$ group which can be presented as
\begin{equation} \label{q8p}
 Q_8 = \langle \ii, \jj : \ii^4 = \ind, \ii^2 = \jj^2, \ii^3 \jj = \jj\ii \rangle  
 \end{equation}
We will denote the neutral element of $Q_8$ by $\ind$. Often in the literature, $\ii \jj$ is denoted by $\kk$ and $\ii^2$ by $-1$. This is justified as $-1\in Q_8$ is central and its square is $\ind$. However, $-1$ also denotes the additive inverse of $1$ in a ring, and potentially can cause confusion while working with group rings. Therefore we will instead denote $-1$ by $\nind\in Q_8$ and $\hi=\nind\ii,\hj=\nind\jj,\hk=\nind\kk$.
With this notation, the relations in $Q_8$ can be rewritten as
\begin{itemize}
	\item $\ii\jj = \kk$, $\jj\kk = \ii$, $\kk \ii = \jj$
	\item $\ii^2 = \jj^2 = \kk^2 = \nind $, 
	\item $(\nind)^2 = 1$, and
	\item $\jj\ii = \hk$, $\kk \jj = \hi$, $\ii \kk = \hj$.
\end{itemize}
The quotient of the central subgroup of order $2$ generated by $\nind$, is the \textit{Klein four group} $C_2 \times C_2$. In other words we have an exact sequence of groups 
\[ \ind \to C_2 \overset{\iota}\to Q_8 \overset{q}\to C_2\times C_2 \to \ind.\]
We will denote the images of $\ii,\jj\in Q_8$ by $\ii,\jj\in C_2\times C_2$.

Let $\FF$ be an arbitrary field and let $V_4(\FF)$ denote the $4$-dimensional representation of $Q_8$ induced by the regular representation of $C_2\times C_2$ via the quotient map $q$. Let $V_8(\FF)$ denote the regular representation of $Q_8$. When $\chr \FF = 2$, it is easy to see that there is an exact sequence of $\FF[Q_8]$-modules 
\[ 0 \to V_4(\FF) \overset{t}\to V_8(\FF) \overset{r}\to V_4(\FF) \to 0.\] 
More explicitly, let $\iota_4$ and $\iota_8$ be the generators of  $V_4(\FF)$ and $ V_8(\FF)$ as $\FF[Q_8]$-modules and define 
  \begin{eqnarray*}
  r(g \cdot \iota_8) &=& q(g) \cdot \iota_4 \\
  t(h \cdot \iota_4) &=& h \cdot \iota_8 + \nind h \cdot \iota_8,
\end{eqnarray*}
 for $h, g \in Q_8$.

The purpose of this appendix is to give a necessary and sufficient condition on an $8$-dimensional representation $V$ over a field $\FF$ with $\chr \FF = 2$, which fits in the exact sequence 
\begin{equation} \label{exact}
0 \to V_4(\FF) \overset{\tilde{t}}\to V \overset{\tilde{r}}\to V_4(\FF) \to 0,
\end{equation}
under which it is isomorphic to $V_8(\FF)$. When $\chr \FF \neq 2$, the problem is straightforward. Any $V$ which satisfies \eqref{exact} is isomorphic to $ V_4(\FF) \oplus V_4(\FF)$, including $V_8(\FF)$, the regular representation of $Q_8$. This is because, when $\chr \FF \nmid |Q_8|$, and $W$ is a subrepresentation of $V$, then one can define a complement subrepresentation $W'$ such that $V \iso W \oplus W'$ (Maschke's theorem). In our case, let $W = \img \tilde{t}$ and $W'$  be its complement. Since \eqref{exact} is an exact sequence, it follows that 
\[ W \iso W' \iso V_4(\FF).\]
We will soon see that $V_{8}(\FF) \ncong V_4(\FF) \oplus V_4(\FF)$ when $\chr \FF = 2$. 

For any $g \in G$, let $e_g\in \FF[G]$ denote the element such that\[g'e_g = e_{g'g}\] 
for every $g' \in G$. The collection $\set{e_g: g \in G}$ forms a basis for $\FF[G]$. For our convenience, we consider the ordered basis
\begin{equation}\label{appendix:B4} \mathcal{B}_4 = \set{ v_1 = e_{\ind}+ e_{\ii}+ e_{\jj}+e_{\kk} , v_2 = e_{\ind} + e_{\jj} , v_3 = e_{\ind} + e_{\ii} , v_4 = e_{\ind} }\end{equation}
of $V_4(\FF)$. Note that 
\[ (\ii)_ {\mathcal{B}_4}= \mtrx{1&1&0&0\\0&1&0&0\\ 0&0&1&1\\ 0&0&0&1} \]
\[ (\jj)_ {\mathcal{B}_4} = \mtrx{1&0&1&0\\0&1&0&1\\ 0&0&1&0\\ 0&0&0&1} \]
\[ (\kk)_ {\mathcal{B}_4}= \mtrx{1&1&1&1\\0&1&0&1\\ 0&0&1&1\\ 0&0&0&1}. \]
Thus any vector space isomorphic to the regular representation of $C_2 \times C_2$, admits a basis $\mathcal{B}$ such that 
\[ (\ii)_{\mathcal{B}} = (\ii)_{\mathcal{B}_4}, (\jj)_{\mathcal{B}} = (\jj)_{\mathcal{B}_4}, (\kk)_{\mathcal{B}} = (\kk)_{\mathcal{B}_4}.\]

The main result in this appendix is the following. 
\begin{lem} \label{appendixmain}Let $\FF$ be a field with $\chr \FF = 2$. Suppose we have an exact sequence of $\FF[Q_8]$-modules
\begin{equation} \label{exact1}
 0 \to V_4 \overset{\tilde{t}}\to V_8 \overset{\tilde{r}}{\to} V_4 \to 0
\end{equation}
where $V_4$ is a representation of $Q_8$ induced from the regular representation of $C_2 \times C_2$. Let $\mathcal{B}=\set{v_1,v_2,v_3,v_4}$ be a basis of $V_4$ such that 
\[ (\ii)_ {\mathcal{B}}= \mtrx{1&1&0&0\\0&1&0&0\\ 0&0&1&1\\ 0&0&0&1},  (\jj)_ {\mathcal{B}}= \mtrx{1&0&1&0\\0&1&0&1\\ 0&0&1&0\\ 0&0&0&1} \]
\[ (\kk)_ {\mathcal{B}}=\mtrx{1&1&1&1\\0&1&0&1\\ 0&0&1&1\\ 0&0&0&1}. \]
Then for any basis  $\mathcal{C}=\set{c_1 , c_2,c_3,c_4 , c_1', c_2', c_3', c_4'}$ of $V_8$ with the property that $\tilde{t}(v_i) = c_i$ and $\tilde{r}(c_i') = v_i$, we have 
\begin{enumerate}
\item\label{mainlemma:one}  $(\nind)_{\mathcal{C}} = \mtrx{I_4 & M \\ 0 & I_4}$, where 
\[M = \mtrx{c&d&a&b \\ 0& c&0& a \\ 0&0& c& d\\ 0&0&0&c} \]
for $a,b,c,d \in \FF$, and,
\item\label{mainlemma:two} if $c \neq 0$ then $V_8$ is isomorphic to the regular representation of $Q_8$.
\end{enumerate}
\end{lem}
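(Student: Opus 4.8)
For the final statement (Lemma~\ref{appendixmain}), the plan is to treat parts (1) and (2) in turn, using throughout two elementary facts. First, $\nind=\ii^2$ acts trivially on $V_4$ — equivalently $(\ii)_{\mathcal B}^2=I_4$ over $\FF$, reflecting that $V_4$ is inflated along $q\colon Q_8\to C_2\times C_2$ — so that $\FF[Q_8]$ acts on $V_4$ through the group ring $\FF[C_2\times C_2]$, which in characteristic $2$ is the local ring $\FF[x,y]/(x^2,y^2)$ with $x=\ii+1$, $y=\jj+1$. Second, in a local ring a unit generates the free rank-one module over itself; this is the engine of part (2).

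For part (1), I would first observe that since $\tilde t(v_i)=c_i$ and $\nind$ fixes each $v_i$, we get $\nind\cdot c_i=c_i$; and since $\tilde r(c_i')=v_i$ is $\nind$-fixed, $(\nind-1)c_i'\in\ker\tilde r=\img\tilde t$. Hence $(\nind)_{\mathcal C}=\mtrx{I_4 & M\\ 0 & I_4}$ for some $4\times 4$ matrix $M$. The key point is that the operator $v\mapsto(\nind-1)v$ annihilates $\img\tilde t$ and has image in $\img\tilde t$, so it descends to a map $V_8/\img\tilde t\to\img\tilde t$, i.e.\ (transporting along $\tilde r$ and $\tilde t$) an $\FF$-linear endomorphism of $V_4$ whose matrix in the basis $\mathcal B$ is precisely $M$; and this endomorphism is $\FF[Q_8]$-linear because $\nind$ is central in $Q_8$. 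Since the $Q_8$-action on $V_4$ factors through $C_2\times C_2$, it is in fact $\FF[C_2\times C_2]$-linear, so lies in $\mathrm{End}_{\FF[C_2\times C_2]}(V_4)$. As $V_4$ is the regular $\FF[C_2\times C_2]$-module, this endomorphism ring is $\FF[C_2\times C_2]$ acting by multiplication; unwinding the identifications $v_4\leftrightarrow 1$, $v_3\leftrightarrow x$, $v_2\leftrightarrow y$, $v_1\leftrightarrow xy$ (which follow from $\mathcal{B}$ having the same matrices as $\mathcal{B}_4$), a direct computation of multiplication by a general element $c+dx+ay+bxy$ in the basis $\mathcal B_4$ produces exactly the matrix $M$ of the statement — in particular its $(4,4)$-entry $c$ is the constant term.

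For part (2), assuming $c\neq 0$, I would show that the single vector $v=c_4'$ generates $V_8$ over $\FF[Q_8]$; this suffices, since then $r\mapsto r\cdot v$ is an $\FF[Q_8]$-linear surjection $\FF[Q_8]\twoheadrightarrow V_8$ between $\FF$-vector spaces of dimension $8$, hence an isomorphism, exhibiting $V_8$ as the regular representation. Writing $W=\img\tilde t$: applying $\tilde r$ shows $\FF[Q_8]\cdot v$ surjects onto $\FF[Q_8]\cdot v_4=V_4$, because $v_4$ corresponds to $1$ and thus generates the regular $\FF[C_2\times C_2]$-module; hence $\FF[Q_8]\cdot v+W=V_8$. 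On the other hand, by part (1), $(\nind-1)v=\tilde t(c+dx+ay+bxy)$, and since $c\neq 0$ the element $c+dx+ay+bxy$ is a unit of $\FF[C_2\times C_2]$, so $\FF[Q_8]\cdot(\nind-1)v=\tilde t(\FF[C_2\times C_2])=W$, giving $W\subseteq\FF[Q_8]\cdot v$. Combining, $\FF[Q_8]\cdot v=W+\FF[Q_8]\cdot v=V_8$.

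The only genuinely computational step is the explicit identification of $\mathrm{End}_{\FF[C_2\times C_2]}(V_4)$ with the upper-triangular matrices of the stated shape; everything else is bookkeeping with the exact sequence \eqref{exact1}. The conceptual heart — and the reason the hypothesis $c\neq 0$ enters — is that $c\neq 0$ says exactly that $(\nind-1)c_4'$ is a \emph{generator} of the submodule $W\cong V_4$ rather than an element of its radical, which is what allows one to lift a generator of the quotient $V_8/W$ to a generator of $V_8$. I expect the main (minor) obstacle to be keeping the three identifications $W\cong V_4$, $V_8/W\cong V_4$, and $\mathrm{End}(V_4)\cong\FF[C_2\times C_2]$ consistently aligned with the bases $\mathcal B$, $\mathcal C$, $\mathcal B_4$, and in particular checking the constant-term bookkeeping so that the scalar $c$ appearing in $M$ is indeed the one whose nonvanishing is needed.
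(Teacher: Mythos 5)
Your proof is correct, and it takes a genuinely different route from the paper's. The paper argues by direct computation: it writes $(\ii)_{\mathcal C}$ and $(\jj)_{\mathcal C}$ with unknown $4\times4$ blocks $X,Y$, normalizes the lifts $c_i'$ by a change of basis, and imposes the quaternion relations $\ii^2=\jj^2$ and $\ii\jj=\nind\jj\ii$ to solve linear systems that force the stated shape of $M$ (independence of $M$ from the choice of lifts is handled by noting the change-of-basis matrix commutes with $(\nind)_{\mathcal C}$); for part (2) it verifies that the $Q_8$-orbit of $c_4'$ spans $V_8$ by computing an explicit $8\times8$ determinant, $\det A=c^4$. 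You instead observe that $\nind-\ind$ annihilates $\img\tilde t$ and maps into it, hence descends to an $\FF[Q_8]$-linear, i.e.\ $\FF[C_2\times C_2]$-linear, endomorphism of $V_4$ whose matrix in $\mathcal B$ is exactly $M$; identifying $\mathrm{End}_{\FF[C_2\times C_2]}(V_4)$ with $\FF[C_2\times C_2]\iso\FF[x,y]/(x^2,y^2)$ and computing multiplication by $c+dx+ay+bxy$ in the basis $v_1\leftrightarrow xy$, $v_2\leftrightarrow y$, $v_3\leftrightarrow x$, $v_4\leftrightarrow 1$ reproduces the stated $M$, with the $(4,4)$-entry equal to the constant term, so the bookkeeping you flagged does come out right — and this also shows with no computation that $M$ is independent of the choice of the $c_i'$. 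For (2), reading $c\neq0$ as saying the multiplier is a unit of the local ring $\FF[x,y]/(x^2,y^2)$ gives $\img\tilde t\subseteq\FF[Q_8]\cdot c_4'$, and combined with surjectivity onto the quotient this shows $c_4'$ generates $V_8$, so the evaluation map $\FF[Q_8]\to V_8$ is a surjection of $8$-dimensional $\FF$-vector spaces, hence an isomorphism. Your route is shorter and conceptually cleaner: it isolates why only the diagonal scalar $c$ matters (a Nakayama-style lifting of generators over a local ring), whereas the paper's approach requires solving linear systems and a determinant evaluation but produces the explicit matrices $X$ and $Y$ for the $\ii$- and $\jj$-actions as a by-product (which, however, are not needed elsewhere in the paper).
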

\begin{proof}
It follows from \eqref{exact1} that 
\[ (\ii)_{\mathcal{C}} = \mtrx{(\ii)_{\mathcal{B}} & X \\ 0 & (\ii)_{\mathcal{B}} } \text{ and } (\jj)_{\mathcal{C}} = \mtrx{(\jj)_{\mathcal{B}}& Y \\ 0 &(\jj)_{\mathcal{B}}}  \]
for some $(4\times4)$ matrices $X, Y$. Let $x_{ij}$ and $y_{ij}$ denote the $(i,j)$-th entry of $X$ and $Y$ respectively.  Since the choice of $c_i'$ is only unique modulo $\img \tilde{t}$, we may apply a change of basis matrix of the form 
\[ P = \mtrx{ I_4 & \overline{P} \\ 0 & I_4 }.\]
In particular, if we choose 
\[ \overline{P}  = \mtrx{y_{13} & 0 & x_{14} & 0 \\
x_{11} & x_{12} + y_{13} & x_{13} & 0 \\
y_{11} & y_{12} & 0 & y_{14} \\
x_{31} & x_{32}+y_{11} & x_{33} & x_{34}
} \]
we see that 
\[ P (\ii)_{\mathcal{C}} P^{-1} = \mtrx{(\ii)_{\mathcal{B}}
 & \widetilde{X} \\ 0 & (\ii)_{\mathcal{B}} }, P (\jj)_{\mathcal{C}} P^{-1} = \mtrx{(\jj)_{\mathcal{B}} & \widetilde{Y} \\ 0 & (\jj)_{\mathcal{B}} }  \]
where 
\[ \tilde{X} = \mtrx{ 0&0&0&0 \\ x_{21} & x_{11} + x_{12} & x_{23} & x_{13} + x_{24} \\ 0&0&0&0 \\ x_{41} & x_{31}+x_{42} & x_{43} & x_{33} + x_{34}
} \]
and \[	
\tilde{Y} = \mtrx{
0 & 0& 0 & 0 \\
x_{31} + y_{21} & x_{32} + y_{11} + y_{22} & x_{11} + x_{33} + y_{23} & x_{12} + x_{34} + y_{13} + y_{24} \\ 
y_{31} & y_{32} & y_{11} + y_{33} & y_{12} + y_{34} \\ 
y_{41} & y_{42} & x_{31} + y_{43} & x_{32} + y_{11} + y_{44}	
}.
\]
Thus without loss of generality we may assume that 
\[ X = \mtrx{0 & 0 & 0 & 0 \\
x_{21} & x_{22} & x_{23} & x_{42} \\
0 & 0 & 0 &0 \\
x_{41} & x_{42} & x_{43} & x_{44} 
}
\text{ and }
Y = \mtrx{ 0 & 0 & 0 & 0 \\
y_{21} & y_{22} & y_{23} & y_{24} \\
y_{31} & y_{32} & y_{33} & y_{43} \\ 
y_{41} & y_{42} & y_{43} & y_{44}
}.
\]
Now we use the relations \eqref{q8p} to get further restrictions on $X$ and $Y$. While $(\ii)_{\mathcal{C}}^4 = (\jj)_{\mathcal{C}}^4 = I_8$ is trivially satisfied, $(\ii)_{\mathcal{C}}^2 = (\jj)_{\mathcal{C}}^2$ is true if and only if 
\[ (\ii)_{\mathcal{C}} X + X (\ii)_{\mathcal{C}} = (\jj)_{\mathcal{C}} Y + Y (\jj)_{\mathcal{C}}.  \]
Thus we get a linear system, which upon solving yields only $y_{23}, y_{24}, y_{32}, y_{33}, y_{34}, y_{42}, y_{43}, y_{44}$ as free variables and 
we get \[ 
X = \mtrx{
0&0&0&0 \\ 
y_{42}& y_{32} & y_{33} & y_{34} \\ 
0 & 0 &0 & 0 \\
0 & 0 & y_{42} & y_{32}
} \text{ and }  
Y = \mtrx{
0&0& 0&0 \\
y_{43} & y_{43}+ y_{44} & y_{23} & y_{24} \\ 
y_{42} & y_{32} & y_{33} & y_{34} \\
0 & y_{42} & y_{43} & y_{44}	
}
\]
Consequently, $(\nind)_{\mathcal{C}} = \mtrx{ I_4 & M \\ 0 & I_4 }$, where 
\[ M = \mtrx{
y_{42} & y_{32} & y_{33} & y_{34} \\ 
0 & y_{42} & 0 & y_{33} \\
0 & 0 & y_{42} & y_{32} \\ 
0 & 0 & 0 & y_{42}	
}\] 
Now, the linear system generated by the relation 
\[ (\ii)_{\mathcal{C}}(\jj)_{\mathcal{C}} = (\nind)_{\mathcal{C}}(\jj)_{\mathcal{C}}(\ii)_{\mathcal{C}} \]
has free variables $y_{33}, y_{34}, y_{43}, y_{44}$ and basic variables
\begin{eqnarray*}
y_{23} & =& y_{33} + y_{43} \\ 
y_{24} & =& y_{34} + y_{44} \\ 
y_{32} &=& y_{33} + y_{43} + y_{44} \\
y_{42} &=& y_{43}.
\end{eqnarray*}
Let $a= y_{33}, b= y_{34},c= y_{43}$ and $d= y_{33} + y_{43} + y_{44}$. In terms of $a, b, c, d$, we have 

\begin{equation} \label{XY}
 X = \mtrx{
0 & 0 & 0 &0 \\ 
c &  d & a & b \\ 
0 & 0 & 0 & 0 \\
0 & 0 &c & d
},
Y = \mtrx{0&0&0&0 \\
c & c+d & a+c & a+b+c+d \\ 
c & d & a& b \\ 
0 & c & c & a+c+d
}, 
\end{equation}
\begin{equation} \label{M}
M = \mtrx{
c &  d & a & b \\ 
0 & c & 0 & a \\ 
0 & 0 & c & d \\ 
0 & 0& 0 & c
}.
\end{equation} 
Recall that our change of basis matrix was of the form 
\[ P = \mtrx{ I_4 & \overline{P} \\ 0 & I_4},\] 
and thus $P^{-1} = P$ and we have
\[ P^{-1} (\nind)_{\mathcal{C}} P = \mtrx{ I_4 & M  \\ 0 & I_4 }\]
as $\chr \FF = 2$. This proves \eqref{mainlemma:one}. 

For \eqref{mainlemma:two}, we need to find a vector $\overline{v}$ such that 
\[ \set{g \overline{v}: g \in Q_8}\]
spans $V_8$. We choose $\overline{v} =c_4'= \mtrx{0 &0 &0 &0 &0 &0 &0 &1}^T$ in the basis $\mathcal{C}$. Let 
\[ A = \mtrx{ \overline{v} & (\nind)_{\mathcal{C}} \overline{v} & (\ii)_{\mathcal{C}} \overline{v} & (\hi)_{\mathcal{C}} \overline{v} & (\jj)_{\mathcal{C}} \overline{v}& (\hj)_{\mathcal{C}} \overline{v}& (\kk)_{\mathcal{C}} \overline{v} & (\hk)_{\mathcal{C}} \overline{v}  }.\]
Using \eqref{XY} and \eqref{M} we see that 
\[ A = \mtrx{
0&b&0& a+b&0 & b+d & a+b+c+d & 0 \\
0&a&b & a+b&a+b+c+d& b+d& a+c& 0 \\ 
0 &d&0& c+d&b& b+d & a+b+c+d& a+b\\
0 & c& d& c+d &a+c+d& a+d& a+c& a \\
0&0&0&0&0&0&1&1\\
0&0&0&0&1&1&1&1\\
0&0&1&1&0&0&1&1\\
1&1&1&1&1&1&1&1
}.
\]
By a tedious but straightforward calculation, we find
\[ \det A = c^4,\]
completing the proof of \eqref{mainlemma:two}.
\end{proof}

\begin{rem}
 When $\chr \FF =2 $, the representations $V_4(\FF) \oplus V_4(\FF)$ and $V_8(\FF)$ are not isomorphic. Without loss of generality we may assume $c= 1$ and $ a= b = d = 0$. Suppose there were an isomorphism between $V_4(\FF) \oplus V_4(\FF)$ and $V_8(\FF)$. Then there exists a invertible matrix $P$ such that 
\[ P \mtrx{(\nind)_{\mathcal{B}_4} & 0 \\
0 &  (\nind)_{\mathcal{B}_4}
} = (\nind)_{\mathcal{C}} P 
\]
Note that $(\nind)_{\mathcal{B}_4}$ is simply the identity matrix, while $(\nind)_{\mathcal{C}}$ is not. It follows easily that any matrix which satisfies the above condition is not invertible, hence a contradiction.
\end{rem}
\begin{rem}We are unaware of any classification theorem for $8$ dimensional representations of $Q_8$ over fields of characteristic $2$. We suspect that the question of how many isomorphism classes of $V$ satisfy \eqref{exact} can be resolved. A possible guess might be that there are overall $4$ isomorphism classes 
\begin{itemize}
	\item $c \neq 0$ (when $V \iso V_8(\FF)$),
	\item $c = 0$, $d\neq 0 $, 
	\item $c= 0$, $d = 0$, $a \neq 0$, 
	\item $c= 0$, $a = 0$, $d= 0$, $b \neq 0$, and
	\item $a= b= c = d = 0$ (when $V \iso V_4(\FF) \oplus V_4(\FF)$).
\end{itemize}
Since this is irrelevant to the purpose of the paper, we leave this question to the interested reader to verify.
\end{rem}



\begin{thebibliography}{9}
\bibitem[A66]{Adams}J. F. Adams. On the groups J(X) IV. \emph{Topology} 5 (1966), 21-71.
\bibitem[B15]{Bea1}A. Beaudry. The algebraic duality resolution at $p=2$. \emph{Algebraic and Geometric Topology} 15:6 (2015), 3653-3705.
\bibitem[B17]{Bea2}A. Beaudry. Towards the homotopy of $K(2)$-local Moore spectrum  at  $p=2$. \emph{Advances in Mathematics}306 (2017), 722-788. 
\bibitem[BHHM08]{BHHM}M. Behrens, M. Hill, M. Hopkins and M. Mahowald. On the existence of a $v_2^{32}$ self-map on $M(1,4)$ at the prime 2. \emph{Homology, Homotopy and Applications} 10:3 (2008), 45-84.
\bibitem[BE]{BE}P. Bhattacharya and P. Egger. A class of 2-local finite spectra which admit a $v_2^1$-self-map. arXiv:1608.06250 [math.AT].
\bibitem[BEM17]{BEM}P. Bhattacharya, P. Egger and M. Mahowald. On the periodic $v_2$-self-map of $A_1$. \emph{Algebraic and Geometric Topology} 17:2 (2017), 657-692.
\bibitem[BG]{BG}I. Bobkova and P. Goerss. Topological resolutions in $K(2)$-local homotopy theory at the prime 2. \emph{Journal of Topology} 11:4 (2018), 918-957.
\bibitem[B79]{B}A. K. Bousfield. The localization of spectra with respect to homology. \emph{Topology} 18:4 (1979), 257-281. 
\bibitem[Buj]{Buj}C. Bujard. Finite subgroups of extended Morava stabilizer groups. arXiv:1206.1951 [math.AT].
\bibitem[DH95]{DH1}E. Devinatz and M. Hopkins. The action of the Morava stabilizer group on the Lubin-Tate moduli space of lifts. \emph{American Journal of Mathematics} 117 (1995), 669-710.
\bibitem[DH04]{DH2}E. Devinatz and M. Hopkins: Homotopy fixed point spectra for closed subgroups of the
Morava stabilizer groups. \emph{Topology} 43:1 (2004), 1-47.
\bibitem[DM81]{DM81}D. Davis and M. Mahowald. $v_1$ and $v_2$ periodicity. \emph{American Journal of Mathematics} 103:4 (1981), 615-659.
\bibitem[GHMR]{GHMR}P. Goerss, H.-W. Henn, M. Mahowald and C. Rezk. A resolution of the $K(2)$-local sphere at the prime $3$. \emph{Annals of Mathematics} 162:2 (2005), 777-822.
\bibitem[NilpII]{HS}M. Hopkins and J. Smith. Nilpotence and Stable Homotopy Theory II. \emph{Annals of Mathematics} 148:1 (1998), 1-49.
\bibitem[LT]{LT}J. Lubin and J. Tate. Formal moduli for one-parameter formal Lie groups. \emph{Bulletin de la Soci\'et\'e Math\'ematique de France} 94 (1966), 49-59.
\bibitem[M]{Mah}M. Mahowald. The Image of $J$ in the EHP sequence. \emph{Annals of Mathematics} 116:1 (1982), 65-112.
\bibitem[M81]{M1}M. Mahowald. $bo$-resolutions. \emph{Pacific Journal of Mathematics} 92 (1981), 365-383.
\bibitem[M82]{M2}M. Mahowald. The Image of $J$ in the EHP sequence. \emph{Annals of Mathematics} 116:1 (1982), 65-112.
\bibitem[Mil81]{Mil}H. Miller. On relations between Adams spectral sequences, with an application to the stable homotopy of a Moore space. \emph{Journal of Pure and Applied Algebra} 20 (1981), 287–312.
\bibitem[MRW]{MRW}H. Miller, D. Ravenel, S. Wilson. Periodic phenomena in the Adams-Novikov spectral sequence. \emph{Annals of Mathematics} 106:3 (1977), 469-516. 
\bibitem[Mor85]{Mor}J. Morava. Noetherian localisations of categories of cobordism comodules. \emph{Annals of Mathematics} 121:1 (1985), 1–39.
\bibitem[R84]{Ravtel}D. Ravenel. Localization with respect to certain periodic homology theories. \emph{American Journal of Mathematics} 106:2 (1984), 351-414. 
\bibitem[R88]{Rav} D. Ravenel. Complex Cobordism and Stable Homotopy Groups of Spheres. \emph{Bulletin (New Series) of the American Mathematical Society} 18, 1988.
\bibitem[R92]{Ravnil} D. Ravenel. Nilpotence and periodicity in stable homotopy theory. \emph{Annals of Mathematics Studies} 128, Princeton University Press, 1992.
\bibitem[Tod]{Tod}H. Toda. On Spectra Realizing Exterior Parts of the Steenrod Algebra. \emph{Topology} 10:1 (1974), 53-65.
\end{thebibliography}
\end{document}